\DeclareMathOperator{\D}{D}
\crefname{equation}{}{}
\crefname{lem}{Lemma}{Lemmas}
\crefname{thm}{Theorem}{Theorems}
\newcommand{\bb}{{\mathbb B}}
\newcommand{\dd}{\,{\rm d}}
\newcommand{\R}{\,{\mathbb R}}
\newcommand{\dual}[1]{\left\langle {#1} \right\rangle}
\newcommand{\inner}[1]{\left({#1} \right)}
\newcommand{\nm}[1]{\left\lVert {#1} \right\rVert}
\newcommand{\snm}[1]{\left\lvert {#1} \right\rvert}
\newcommand{\ssnm}[1]
{
	\left\vert\kern-0.25ex
	\left\vert\kern-0.25ex
	\left\vert
	{#1}
	\right\vert\kern-0.25ex
	\right\vert\kern-0.25ex
	\right\vert
}
\def\spher@harm#1{%
	\vbox{\hbox{%
			\offinterlineskip
			\valign{&\hb@xt@2\p@{\hss$##$\hss}\vskip.2ex\cr#1\crcr}%
		}\vskip-.36ex}%
}
\def\gshone{\spher@harm{.}}
\def\gshtwo{\spher@harm{.&.}}
\def\gshthree{\spher@harm{.&.&.}}
\let\gsh\spher@harm
\newtheorem{prop}{Proposition}[section]
\newtheorem{lem}{Lemma}[section]
\newtheorem{rem}{Remark}[section]
\newtheorem{thm}{Theorem}[section]
\newtheorem{eg}{Example}
\newcounter{mnote}
\let\oldmarginpar\marginpar
\renewcommand\marginpar[1]
\def\@captype{table}\makeatother
\begin{document}
	\title{
		\Large \bf Optimal error estimation of a time-spectral method for 
		fractional diffusion problems with low regularity data
		\thanks
		{
			This work was supported in part by National Natural Science Foundation
			of China (11771312).
		}
	}
\author[,a,b]{Hao Luo\thanks{Email: luohao@math.pku.edu.cn}}
\author[,a]{Xiaoping Xie\thanks{Corresponding author. Email: xpxie@scu.edu.cn}}
\affil[a]{School of Mathematics, Sichuan University, Chengdu 610064, China} 
\affil[b]{School of Mathematical Sciences, Peking University, Beijing, 100871, China}
%
	
	\date{}
	\maketitle
	
	\begin{abstract}
		This paper is devoted to the error analysis of a time-spectral algorithm 
		for fractional diffusion problems of order $\alpha$ ($0 < \alpha < 1$). The solution regularity in the Sobolev space is revisited, and  new regularity results in  the Besov space  are established. A time-spectral algorithm is developed which adopts a standard spectral method and a conforming linear finite element method for  temporal and spatial discretizations, respectively. Optimal error estimates are derived with nonsmooth data. Particularly, a sharp temporal convergence rate $1+2\alpha$ is shown theoretically and   numerically.
		%
		%
	\end{abstract}
	
	\medskip\noindent{\bf Keywords:} fractional diffusion problem, finite element, spectral method, Jacobi polynomial, low regularity, Besov space, optimal error estimate.

	\section{Introduction}
	\label{sec:intro}
	This paper considers the following time fractional diffusion problem:
	\begin{equation}
	\label{eq:model}
	\left\{
	\begin{aligned}
	\D_{0+}^\alpha (u-u_0) - \Delta u & = f   &  & \text{in~$ \Omega\times(0,T) $,}         \\
	u                                 & = 0   &  & \text{on $ \partial\Omega\times(0,T) $,} \\
	u(0)                              & = u_0 &  & \text{in $ \Omega $,}
	\end{aligned}
	\right.
	\end{equation}
	where $T>0,\,0 < \alpha < 1 $, $ \D_{0+}^\alpha $ is a Riemann\textendash Liouville fractional
	differential operator (see \cref{sec:pre}), $ \Omega \subset \mathbb R^d $ ($d=1,2,3$) is a convex polygonal domain, and $ u_0 $ and $ f $ are given data. 
	
	Problem \cref{eq:model} is widely used in modeling of  anomalous diffusion process \cite{metzler_fractional_1994,metzler_random_2000} and anomalous transport \cite{luchko_modeling_2011,zaslavsky_chaos_2002},   for its capability of  accurately describing
	models with non-locality and historical memory \cite{kilbas_theory_2006,podlubny_fractional_1999}.  For theoretical study to the problem, e.g. the weak solution and its regularity, we refer to \cite{gorenflo_time-fractional_2015,Li-Xie2019,li_space-time_2009,sakamoto_initial_2011}.   
	
	Many numerical methods have been developed in the past a dozen years. 
	Among existing works, four types of temporal discretization are most prevailing, i.e.,   finite difference methods (L-type schemes) \cite{alikhanov_new_2015, kopteva_error_2019,lin_finite_2007,lv_error_2016},   convolution quadrature methods \cite{ford_finite_2011,gao_stability_2015,xing_higher_2018,yang_time_2018},  finite element methods  \cite{li_luo_xie_analysis_2019,li_luo_xie_space-time_2019,li_analysis_2019-1,li_numerical_2019} and  spectral  methods \cite{li_luo_xie_time-spectral_2018,li_space-time_2009,shen_efficient_2019,zheng_novel_2015}. 
	Under certain circumstances, problem \cref{eq:model} has an equivalent form like
	\begin{equation}
	\label{eq:model-equi}
	\left\{
	\begin{aligned}
	u_t - \D_{0+}^{1-\alpha}\Delta u & = g   &  & \text{in~$ \Omega\times(0,T) $,}         \\
	u                                 & = 0   &  & \text{on $ \partial\Omega\times(0,T) $,} \\
	u(0)                              & = u_0 &  & \text{in $ \Omega $,}
	\end{aligned}
	\right.
	\end{equation}
	where $g =  \D_{0+}^{1-\alpha} f$. In the literature, both \cref{eq:model,eq:model-equi} are called time fractional diffusion equations or time fractional subdiffusion equations. For  the solution regularity and numerical analysis of problem \cref{eq:model-equi}, especially in the case of nonsmooth data,
	we refer the reader to \cite{li_luo_xie_error_2018,McLean2010,mclean_convergence_2009,mustapha_time-stepping_2015,mustapha_uniform_2012,mustapha_superconvergence_2013}. 
	
	It is well-known that the solution to problem \cref{eq:model} generally has boundary singularity (near $0+$) in temporal direction. If $f=0$ and $u_0\neq 0$,
	 or $u_0=0$ and $f$ is smooth, then one can obtain   growth estimates of the solution \cite{jin_error_2015,jin_error_2013} or even find out
	the leading singular term $ct^\alpha$ of the solution \cite{Li-Xie2019}. Due to the   singularity, the accuracy order $2-\alpha$ of the L1 scheme \cite{lin_finite_2007} deteriorates into 1 in the   case of $f=0$ and $u_0\neq0$, whether the initial data $u_0$ is smooth or not \cite{jin_analysis_2015}. In the same situation, a piecewise constant discontinuous Galerkin (DG) semidiscretization was analyzed in \cite{McLean2015Time}.  The error estimate results  of \cite{jin_analysis_2015,McLean2015Time} can be   summarized as follows: for any temporal grid node $t_j=j\tau$ with $j=1,2,\cdots, J$ and $\tau= T/J$,
	\begin{equation}\label{eq:linf-est}
	\nm{(u-U)(t_j)}_{L^2(\Omega)}
	\leqslant C
	\left\{
	\begin{aligned}
		&t_j^{\alpha-1}\tau\nm{u_0}_{\dot H^2(\Omega)},&&\text{for L1 in }\cite{jin_analysis_2015}\\
	&t_j^{-1}\tau\nm{u_0}_{L^2(\Omega)},&&\text{for L1 in }\cite{jin_analysis_2015} \text{ and } \text{DG in }\cite{McLean2015Time}.
	\end{aligned}
	\right.
	\end{equation}
	Hence, if $u_0\in L^2(\Omega)$, then the first order accuracy under $L^\infty(0,T;L^2(\Omega))$-norm is only achieved far away from the origin, and  the global convergence rate degenerates as $t_j$ approaches to zero; 
	and if $u_0\in\dot H^2(\Omega)$, then the global rate reduces to $\tau^\alpha$.
	The  estimates in \cref{eq:linf-est} coincide with the 
	solution regularity in Sobolev space (see \cref{thm:regu-u0}):
	\[
\left\{
	\begin{aligned}
{}&\epsilon^{3/2}
\snm{u}_{H^{1/2-\epsilon}(0,T;L^2(\Omega))}
\leqslant {}
C_{\alpha,T} \nm{u_0}_{L^{2}(\Omega)},&&0<\epsilon\leqslant  1/2,\\
{}&	\sqrt{2\gamma-\gamma^2}
	\nm{u}_{H^{(1+\alpha\gamma)/2}(0,T;L^2(\Omega))}
	\leqslant {}
	C_{\alpha,T,\Omega} \nm{u_0}_{\dot H^{\gamma}(\Omega)},&&0<\gamma<2,
\end{aligned}
\right.
	\]
which means that $u_0\in L^2(\Omega)\nRightarrow u\in L^\infty(0,T;L^2(\Omega))$ and that 
\[
u_0\in\dot H^2(\Omega)\Rightarrow u\in H^{(1+\alpha\gamma)/2}(0,T;L^2(\Omega))\hookrightarrow L^\infty(0,T;L^2(\Omega)),
\]
 since $\gamma\in(0,2)$ implies the embedding relation above.
	
	To improve the temporal accuracy,  graded meshes were used in \cite{kopteva_error_2019, mustapha_discontinuous_2014,stynes_error_2017} and some correction techniques were proposed in \cite{ford_approach_2017,jin_correction_2017,li_analysis_2019-2,yan_analysis_2018}. However, most of the existing works using graded meshes require some  assumption of growth estimate on the true solution, and the analyses of  correction schemes for \cref{eq:linf-est} are mainly based on the  Laplace transform, which is only applicable for uniform temporal grids, and the obtained convergence rates have the form $t_j^{-q}\tau^p$ with $0<q\leqslant p$ (like \eqref{eq:linf-est}), which deteriorate near the origin. In \cite{li_numerical_2020}, several technical stability results were developed to establish the optimal first order accuracy of a piecewise constant DG method on   graded meshes.  Spectral methods with singular basis functions were presented  \cite{chen_generalized_2014,shen_efficient_2019}, but so far no rigorous convergence analysis is available with low regularity data. In \cite{duan_exponentially_2019}, a multi-domain Petrov--Galerkin spectral method with a singular basis and geometrically graded meshes was proposed, and the exponential decay was verified numerically with nonsmooth initial data.
	
	In the 1980s, Gui and Babu\v{s}ka \cite{gui_h_1986_1} established the optimal approximation of order $1+2\beta$ under the $L^2$-norm of the Legendre orthogonal expansion for the singular function $(x+1)^\beta$ on $(-1,1)$. Later Babu\v{s}ka and Suri \cite{babuska_optimal_1987} extended this result to a $p$-version finite element method for solving  two dimensional elliptic equations, and proved the sharp accuracy of order $2\beta$ under an energy norm, by assuming that the solution has the explicit singular expression $r^\beta$ around the origin. 
	
	Note that the singular functions mentioned above have boundary singularities as well but the achieved convergence rates   agree with their regularity in the Besov space.
	In view of the boundary singularity of the solution to problem \cref{eq:model}, one may 
	wonder whether this happens to the convergence behavior of a time-spectral method.
	For simplicity, let us start with a fractional ordinary differential equation
	\begin{equation}\label{eq:fode-intro}
	\D_{0+}^\alpha(y-y_0) +  \lambda y = 0\quad \text{in}\quad (0,T),
	\end{equation}
	where $y_0\in\R$ and $\lambda>0$. Invoking the 
	Laplace transform gives the solution expression
\begin{equation}\label{eq:sol-ode-intro}
	y(t) =
y_0\sum_{k=0}^{\infty}
\frac{(-\lambda t^\alpha)^k}{\Gamma(\alpha k+1)},
\quad 0\leqslant t\leqslant T.
\end{equation}
	Note that for a given  fixed (small) $\lambda>0$, we have $y\in H^{1/2+\alpha-\epsilon}(0,T)$ for any $\epsilon>0$ (see \cref{rem:regu-ML}). We adopt a standard Legendre spectral method with polynomial degree $M\in\mathbb N$ to seek an approximation $Y_M$, 
	and use $Y_{50}$ 
	as a reference solution.   \cref{fig:ode_test} plots the convergence order $1+2\alpha$ under  $L^2$-norm  in the case that $\lambda=y_0=T=1$. This agrees with the Besov regularity,  $\bb_{-\alpha,0}^{1+2\alpha-\epsilon}(0,T)$ 	for any $\epsilon>0$, of \cref{eq:sol-ode-intro}; see  \cref{lem:regu-y-g-0-Besov}.
	However, if $\lambda$ is extremely large or goes to infinity, then we can see from \cref{lem:Phi-L2-w} that the convergence 
	rate will be ruined (we also refer the reader to \cite[Section 1.2]{duan_exponentially_2019} for detailed numerical 
	investigations in this case).
	\begin{figure}[H]
		\centering
		\includegraphics[width=252pt,height=216pt]{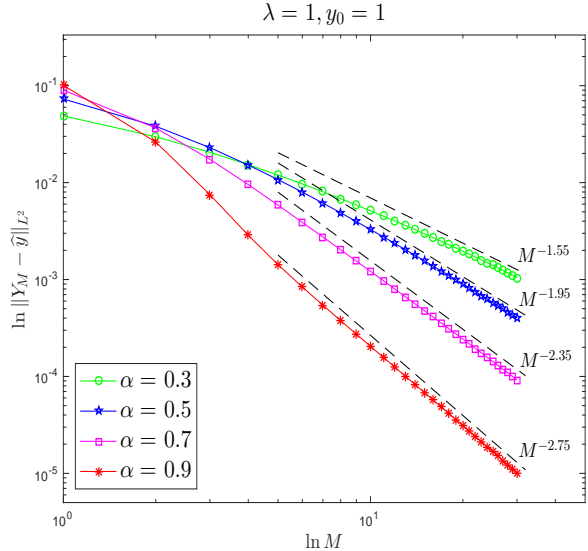}
		\caption{Discretization errors of problem \cref{eq:fode-intro}  with $\lambda= y_0=T=1$. The predicted accuracy is $M^{-1-2\alpha}$.}
		\label{fig:ode_test}
	\end{figure}
	
	As for the model problem \cref{eq:model} itself, although there exists a space-time spectral method   proposed in \cite{li_space-time_2009}, to our best knowledge, no such convergence rate $1+2\alpha$ has been mentioned numerically and established rigorously. 
	In fact, it is nontrivial to obtain this result,  since now the impact of large $\lambda$ comes from the negative Laplacian operator $-\Delta$ (or its discrete version $-\Delta_h$).
	This motivates us to revisit the convergence analysis of the time-spectral method for time fractional diffusion problem \cref{eq:model}. Is it possible to prove the optimal approximation order in terms of Besov regularity with nonsmooth data? Especially, whether the accuracy $1+2\alpha$ can be established or improved?  
	
	In this work, we  give   positive answers to these questions mentioned above. Optimal error estimates with respect to the solution regularity in Besov space are established with low regularity data. Moreover, temporal convergence rates $1+\alpha$ and $1+2\alpha$ under $H^{\alpha/2}(0,T;L^2(\Omega))$-norm and $L^2(0,T;\dot H^1(\Omega))$-norm are derived, respectively, which are sharp and cannot be improved even for smoother data. 
	
	The rest of this paper is organized as follows. \cref{sec:pre}  introduces some notations, including standard conventions, functional spaces and fractional calculus operators. 
	 \cref{sec:regu}  defines the weak solution and establishes its regularity results in Sobolev space and Besov space. \cref{sec:main} presents our main error estimates for the time-spectral method, and   \cref{sec:numer} shows several numerical experiments. Finally,   \cref{sec:concl} gives some concluding remarks.
	\section{Preliminary}
	\label{sec:pre}
	For ease of notation, we make some standard conventions.  For a Lebesgue measurable subset $ \omega $ of
	$ \mathbb R^l $ ($l=1,2,3$), we use $ H^\gamma(\omega) $ ($  \gamma \in\R$) and $ H_0^\gamma(\omega) $ ($  \gamma>0$) to denote  two
	standard Sobolev spaces \cite{tartar_introduction_2007}. 
	Given $1\leqslant p<\infty$, if $\omega$ is an interval 
	and $\mu$ is a nonnegative measurable function on $\omega $, then $L_\mu^p(\omega)$ denotes the weighted $L^p$-space, and the symbol $ \dual{a,b}_\mu $ means $ \int_{\omega}ab\mu$ whenever $ab\in L^1_{\mu}(\omega)$; if $ \omega $ is a Lebesgue measurable set of $ \mathbb
	R^l ( l= 1,2,3,4 )$, then $ \dual{a,b}_\omega $ stands for $ \int_\omega
	ab $ whenever $ ab\in L^1(\omega) $; if $X$ is a Banach space, then
	$\dual{\cdot,\cdot}_X$ means the duality pairing between $X^*$ (the dual space of $X$) and $ X $. 
	In particular, if $ X $ is a Hilbert space, then $(\cdot,\cdot)_X$ means its inner product.  If $X$ and $Y$ are two Banach spaces, then $[X,Y]_{\theta,2}$ is the interpolation space constructed by the well-known $K$-method \cite{bergh_interpolation_1976}. For $k\in\mathbb N$ and any $d$-polytope $\omega\subset\R^d(d=1,2,3)$, $P_k(\omega)$ denotes the set of all polynomials defined on $\omega$ with degree no more than $k$.
	
	It is well-known (cf. \cite{evans_partial_nodate}) that   $L^2(\Omega)$ has an orthonormal basis $\{\phi_n \}_{n=0}^\infty $
	such that 
	\[
	\left\{
	\begin{aligned}
	-\Delta \phi_n ={}& \lambda_n\phi_n,&&\text{in~}\Omega,\\
	\phi_n={}&0,&&\text{on~}\partial \Omega,
	\end{aligned}
	\right.
	\]
	where $ \{ \lambda_n\}_{n=0}^\infty$ is a
	nondecreasing real positive sequence and $\lambda_0 = \nm{\nabla\phi_0}^2_{L^2(\Omega)}>0$ depends only on $\Omega$. 
	For any $  \gamma \in\R$, define
	\[
	\dot H^\gamma(\Omega) := \left\{
	\sum_{n=0}^\infty c_n \phi_n:\
	\sum_{n=0}^\infty  \lambda_n^\gamma c_n^2 < \infty
	\right\}
	\]
	and equip this space with the inner product  
	\[
	\left(
	\sum_{n=0}^\infty c_n \phi_n,
	\sum_{n=0}^\infty d_n\phi_n
	\right)_{\dot H^\gamma(\Omega)} :=
	\sum_{n=0}^\infty \lambda_n^\gamma c_n d_n, \quad \text{for all }\sum_{n=0}^\infty c_n \phi_n, \sum_{n=0}^\infty d_n\phi_n\in \dot
	H^\gamma(\Omega) .
	\]
	The induced norm is denoted by $\nm{\cdot}_{\dot H^\gamma(\Omega)} = \sqrt{\inner{\cdot,\cdot}_{\dot H^\gamma(\Omega)}}$. 
	Note that $\dot H^\gamma(\Omega)$ is a Hilbert space and has an 
	orthonormal basis $\{\lambda_n^{-\gamma/2}\phi_n\}_{n=0}^{\infty}$.
	
	
	Given any $0<\gamma<2$, we introduce the space 
	\[
	{}_0H^{\gamma}(0,T):=[L^2(0,T),{}_0H^{2}(0,T)]_{\gamma/2,2},
	\]
	where ${}_0H^{2}(0,T): = \left\{
	v\in H^2(0,T):v(0) = v'(0) = 0
	\right\}$ with  norm  $\nm{v}_{{}_0H^{2}(0,T)} := \nm{v''}_{L^2(0,T)}$. Therefore, using the interpolation 
	theorem of bounded linear operators \cite[Theorem 1.6]{lunardi_interpolation_1995} yields  
	\begin{equation}\label{eq:0H>H}
	\nm{v}_{[L^2(0,T),H^{2}(0,T)]_{\gamma/2,2}}
	\leqslant \nm{v}_{{}_0H^{\gamma}(0,T)}
	\quad\forall\,v\in{}_0H^{\gamma}(0,T).
	\end{equation}
	In addition, if $0<\gamma<1/2$, then by \cite[Chapter 1]{lions_non-homogeneous3_1973}, the relation ${}_0H^{\gamma}(0,T)=H^{\gamma}(0,T)$ holds in the sense of equivalent norms, and in this case (i.e., $0<\gamma<1/2$) we have an alternative norm, which is defined by
	\[
	\snm{w}_{H^\gamma(0,T)} := \left(
	\int_\mathbb R \snm{\xi}^{2\gamma}
	\snm{\mathcal F(w\chi_{(0,T)})(\xi)}^2 \,\mathrm{d}\xi
	\right)^{1/2} \quad \forall w \in H^\gamma(0,T),
	\]
	where $ \mathcal F: L^2(\mathbb R) \to L^2(\mathbb R) $ is the Fourier transform and $ \chi_{(0,T)} $ is the indicator function of $ (0,T) $. 
	
	Let $ X $ be a separable Hilbert
	space with an inner product $ (\cdot,\cdot)_X $ and an orthonormal basis $
	\{e_n: n \in \mathbb N\} $.
	For any $\gamma\in\R$, let $ H^\gamma(0,T;X) $ be a usual vector-valued Sobolev space defined by
	\begin{equation}\label{eq:def-HX}
	H^\gamma(0,T;X) := \left\{\sum_{n=0}^{\infty}v_ne_n
	:\,\sum_{n=0}^\infty \nm{v_n}_{H^\gamma(0,T)}^2<\infty
	\right\},
	\end{equation}
	with the norm
	\begin{equation*}
		\nm{v}_{H^\gamma(0,T;X)} := \left(
		\sum_{n=0}^\infty \nm{(v,e_n)_X}_{H^\gamma(0,T)}^2
		\right)^{1/2} \quad \forall\,v \in H^\gamma(0,T;X).
	\end{equation*}
	The space ${}_0H^{\gamma}(0,T;X)$ for $0<\gamma<2$ is defined in a similar way as \cref{eq:def-HX}.
	
	For $a,b>-1$, let $\{S_k^{a,b}\}_{k=0}^\infty$ be the family of shifted Jacobi polynomials on $(0,T)$ with respect to the weight $\mu^{a,b}(t) = (T-t)^at^b$; 
	see \cref{sec:sJacobi-append}. 
	Given $\gamma\geqslant 0$, we introduce the Besov space (also known as the weighted Sobolev space, cf. \cite{babuska_direct_2002}) defined by 
	\begin{equation}\label{eq:Habg}
	\bb_{a,b}^\gamma(0,T): = \left\{
	\sum_{k=0}^{\infty}
	v_kS_k^{a,b}:\,
	\sum_{k = 0}^{\infty} (1+k^{2\gamma})\xi_k^{a,b}v_k^2<\infty
	\right\},
	\end{equation}
	where $\xi_k^{a,b}$ is given by \cref{eq:xiab}, and endow this space with the norm
	\[
	\nm{v}_{\bb_{a,b}^\gamma(0,T)}: = 
	\left(\sum_{k = 0}^{\infty}
	(1+k^{2\gamma})
	\xi_k^{a,b}v_k^2\right)^{1/2}
	\quad \forall\,
	v= \sum_{k=0}^{\infty}
	v_kS_k^{a,b}\in \bb_{a,b}^\gamma(0,T).
	\]
	In addition, for any separable Hilbert space $X$, the vector-valued space $\bb_{a,b}^\gamma(0,T;X)$ can be defined in a similar way as that of \cref{eq:def-HX}.
	
To the end, let us  introduce the Riemann--Liouville fractional calculus operators and list some important lemmas. For any $\gamma>0$ and $v\in L^1(0,T;X)$, define the fractional integrals of order $\gamma$ as follows:
	\begin{align*}
		\left(\D_{0+}^{-\gamma} v\right)(t) &:=
		\frac1{ \Gamma(\gamma) }
		\int_0^t (t-s)^{\gamma-1} v(s) \, \mathrm{d}s, 
		\quad t\in(0,T), \\
		\left(\D_{T-}^{-\gamma} v\right)(t) &:=
		\frac1{ \Gamma(\gamma) }
		\int_t^T (s-t)^{\gamma-1} v(s) \, \mathrm{d}s, 
		\quad t\in(0,T), 
	\end{align*}
	where $ \Gamma(\cdot) $ denotes the Gamma function
	\begin{equation}\label{eq:Gamma}
	\Gamma(z): = \int_{0}^{\infty}t^{z-1}e^{-t}\dd t,\quad z>0.
	\end{equation}
	For $ k < \gamma < k+1 $ with $ k \in \mathbb N $, define the left-sided and right-sided Riemann--Liouville  fractional derivative operators of order  $\gamma $  respectively  by 
	\[
	\D_{0+}^\gamma  := {}\D^k \D_{0+}^{\gamma-k},\quad
	\D_{T-}^\gamma  :={} (-\D)^k \D_{T-}^{\gamma-k},
	\]
	where $ \D $ is the first-order generalized derivative operator. 
	\begin{lem}[\cite{ervin_variational_2006}]
		\label{lem:coer}
		If $ -1/2< \gamma < 1/2 $ and $ v,\,w \in
		H^{\max\{0,\gamma\}}(0,T) $, then
		\begin{align*}
			&				\dual{\D_{0+}^\gamma v, \D_{T-}^\gamma v}_{(0,T)}
			={}\cos(\gamma\pi) \snm{v}_{H^{\gamma}(0,T)}^2,\\
			&		\cos(\gamma\pi)\nm{\D_{0+}^\gamma v}_{L^2(0,T)}^2\leqslant
			\dual{\D_{0+}^\gamma v, \D_{T-}^\gamma v}_{(0,T)}
			\leqslant {}\sec(\gamma\pi) \nm{\D_{0+}^\gamma v}_{L^2(0,T)}^2,\\
			&		 \dual{\D_{0+}^{2\gamma} v, w}_{H^\gamma(0,T)} = \dual{\D_{0+}^\gamma v, \D_{T-}^\gamma w}_{(0,T)} \leqslant{} \cos(\gamma\pi)
			\snm{v}_{H^\gamma(0,T)} \snm{w}_{H^\gamma(0,T)}.
		\end{align*}
	\end{lem}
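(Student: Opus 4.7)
The plan is to prove all three identities by extending functions to the whole line and working on the Fourier side. Since $-1/2<\gamma<1/2$, the zero extension $\tilde v\in H^{\max\{0,\gamma\}}(\mathbb R)$ is continuous, so I may invoke the Fourier-norm representation
\[
\snm{v}_{H^\gamma(0,T)}^2=\int_{\mathbb R}\snm{\xi}^{2\gamma}\snm{\mathcal F\tilde v(\xi)}^2\,\mathrm d\xi,
\]
which the paper already sets up in its definition. The core observation is that for functions supported in $(0,T)$ the Riemann–Liouville operators act, on the Fourier side, by multiplication:
\[
\mathcal F(\D_{0+}^\gamma v)(\xi)=(\ii\xi)^\gamma\mathcal F\tilde v(\xi),\qquad
\mathcal F(\D_{T-}^\gamma v)(\xi)=(-\ii\xi)^\gamma\mathcal F\tilde v(\xi),
\]
with the principal branch. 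These identities are standard for fractional integrals ($\gamma<0$), and the derivative case ($0<\gamma<1/2$) follows from $\D_{0+}^\gamma=\D\,\D_{0+}^{\gamma-1}$ plus the fact that $\gamma<1/2$ and the zero extension keep the boundary term from producing a Dirac mass.

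With these formulas in hand, the first identity drops out of Plancherel. I would compute
\[
\dual{\D_{0+}^\gamma v,\D_{T-}^\gamma v}_{(0,T)}
=\int_{\mathbb R}(\ii\xi)^\gamma\,\overline{(-\ii\xi)^\gamma}\,\snm{\mathcal F\tilde v(\xi)}^2\,\mathrm d\xi,
\]
and use $(\ii\xi)^\gamma\overline{(-\ii\xi)^\gamma}=\snm\xi^{2\gamma}e^{\ii\gamma\pi\,\mathrm{sgn}(\xi)}$. The left-hand side is real, so only the cosine of the phase survives; combining the half-line contributions produces the factor $\cos(\gamma\pi)$ in front of the $H^\gamma$-seminorm. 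The second chain of inequalities is then immediate: by Plancherel $\nm{\D_{0+}^\gamma v}_{L^2(0,T)}^2=\int\snm\xi^{2\gamma}\snm{\mathcal F\tilde v}^2=\snm{v}_{H^\gamma(0,T)}^2$, so the identity just proved reads $\dual{\D_{0+}^\gamma v,\D_{T-}^\gamma v}=\cos(\gamma\pi)\nm{\D_{0+}^\gamma v}_{L^2(0,T)}^2$; this simultaneously gives the lower bound (as equality, which is stronger than required) and the upper bound via $\cos(\gamma\pi)\leqslant\sec(\gamma\pi)$ since $\snm\gamma<1/2$.

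For the third line, the duality identity $\dual{\D_{0+}^{2\gamma}v,w}_{H^\gamma(0,T)}=\dual{\D_{0+}^\gamma v,\D_{T-}^\gamma w}_{(0,T)}$ is a fractional integration-by-parts: write $\D_{0+}^{2\gamma}=\D_{0+}^\gamma\D_{0+}^\gamma$ formally and transfer one $\D_{0+}^\gamma$ to the $\D_{T-}^\gamma$ side using the semigroup property and the standard $(\D_{0+}^\gamma)^*=\D_{T-}^\gamma$ relation valid on zero-extended functions. Then Cauchy–Schwarz combined with the first identity gives
\[
\dual{\D_{0+}^\gamma v,\D_{T-}^\gamma w}_{(0,T)}
\leqslant\cos(\gamma\pi)\,\snm{v}_{H^\gamma(0,T)}\snm{w}_{H^\gamma(0,T)}.
\]

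The hard part, and the step most in need of care, is justifying the two Fourier multiplier formulas for the full range $-1/2<\gamma<1/2$ when the functions are only in $H^{\max\{0,\gamma\}}(0,T)$ rather than smooth with compact support. I would handle this by a density argument: first establish the formulas for $v\in C_c^\infty(0,T)$ by direct computation of the convolutions against $t_+^{\gamma-1}$, then extend by the continuity of the zero-extension map from $H^\gamma_0(0,T)=H^\gamma(0,T)$ (valid precisely because $\gamma<1/2$) and the boundedness of the multipliers $(\pm\ii\xi)^\gamma$ on the appropriate weighted $L^2$. Once this is in place the three assertions follow essentially from Plancherel and Cauchy–Schwarz as sketched.
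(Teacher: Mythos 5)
The paper does not actually prove this lemma---it is quoted verbatim from the cited reference of Ervin and Roop, whose argument is exactly the Fourier/Plancherel one you sketch---so your overall strategy is the right one, and your treatment of the first identity is sound: $\D_{0+}^\gamma\tilde v$ and $\D_{T-}^\gamma\tilde v$ are supported in $(0,\infty)$ and $(-\infty,T)$ respectively, so the pairing over $(0,T)$ coincides with the pairing over $\R$, and the phase computation yields the factor $\cos(\gamma\pi)$. Two of your subsequent steps, however, contain genuine gaps. First, Plancherel gives $\nm{\D_{0+}^\gamma\tilde v}_{L^2(\R)}^2=\snm{v}_{H^\gamma(0,T)}^2$, \emph{not} $\nm{\D_{0+}^\gamma v}_{L^2(0,T)}^2=\snm{v}_{H^\gamma(0,T)}^2$: the left-sided operator applied to the zero extension does not vanish on $(T,\infty)$, so the $L^2(0,T)$-norm appearing in the lemma is in general strictly smaller than the full-line one. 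Your "equality, stronger than required" therefore gives only the lower bound of the second line; the upper bound is no longer immediate. It can be repaired: with $A=\dual{\D_{0+}^\gamma v,\D_{T-}^\gamma v}_{(0,T)}$, $a=\nm{\D_{0+}^\gamma v}_{L^2(0,T)}$ and $s=\snm{v}_{H^\gamma(0,T)}$, Cauchy--Schwarz over $(0,T)$ together with $\nm{\D_{T-}^\gamma v}_{L^2(0,T)}\leqslant s$ gives $\cos(\gamma\pi)\,s^2=A\leqslant a\,s$, hence $s\leqslant\sec(\gamma\pi)\,a$ and $A\leqslant\sec(\gamma\pi)\,a^2$; this extra loop is missing from your write-up.

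Second, for the third line, Cauchy--Schwarz combined with $\nm{\D_{0+}^\gamma\tilde v}_{L^2(\R)}=\snm{v}_{H^\gamma(0,T)}$ and $\nm{\D_{T-}^\gamma\tilde w}_{L^2(\R)}=\snm{w}_{H^\gamma(0,T)}$ produces the constant $1$, not $\cos(\gamma\pi)<1$; and you cannot fall back on the energy-norm Cauchy--Schwarz inequality $a(v,w)\leqslant a(v,v)^{1/2}a(w,w)^{1/2}$, because the bilinear form $a(v,w)=\dual{\D_{0+}^\gamma v,\D_{T-}^\gamma w}_{(0,T)}$ is not symmetric for $\gamma\neq0$ (on the Fourier side the antisymmetric part carries a factor $\sin(\gamma\pi)$). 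So the asserted derivation does not deliver the stated constant. Either supply a genuinely different argument for the $\cos(\gamma\pi)$ factor, or record the continuity bound with constant $1$ (or $\sec(\gamma\pi)$), which is what your method actually proves and which suffices for every application of the lemma in this paper. The density step you flag at the end is fine and is indeed where the hypothesis $\snm{\gamma}<1/2$ enters.
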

	\begin{lem}[\cite{luo_li_xie_convergence_2019}]
		\label{lem:key-equi}
		If $ v\in{}_0 H^{\gamma}(0,T) $ with $0< \gamma<2$, then
		\[
		C_1\nm{\D_{0+}^{\gamma}v}_{L^2(0,T)}
		\leqslant \nm{v}_{{}_0H^{\gamma}(0,T)}
		\leqslant C_2\nm{\D_{0+}^{\gamma}v}_{L^2(0,T)},
		\]
		where $C_1$ and $C_2$ depend only on $\gamma$.
	\end{lem}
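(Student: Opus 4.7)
The plan is to establish the claimed two-sided estimate by recognizing $\D_{0+}^\gamma$ as a topological isomorphism from ${}_0H^\gamma(0,T)$ onto $L^2(0,T)$ with inverse $\D_{0+}^{-\gamma}$. The boundary behavior at $t=0$ encoded in the interpolation definition ${}_0H^\gamma(0,T)=[L^2(0,T),{}_0H^2(0,T)]_{\gamma/2,2}$ will ensure the reconstruction identity $v=\D_{0+}^{-\gamma}\D_{0+}^\gamma v$ on ${}_0H^\gamma(0,T)$, so that once the isomorphism is established with equivalent norms both inequalities in the lemma will follow at once.

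For the upper bound $\nm{\D_{0+}^\gamma v}_{L^2(0,T)}\leq C\nm{v}_{{}_0H^\gamma(0,T)}$ I would interpolate endpoint mapping properties of the (now $\gamma$-fixed) operator $\D_{0+}^\gamma$ on standard Sobolev scales. At the smooth endpoint, for $v\in{}_0H^2(0,T)$ the conditions $v(0)=v'(0)=0$ allow two integrations by parts to produce the representation $\D_{0+}^\gamma v=\D_{0+}^{\gamma-2}v''$, and Young's convolution inequality for the smoothing kernel $t^{1-\gamma}/\Gamma(2-\gamma)$ yields $\D_{0+}^\gamma\colon{}_0H^2(0,T)\to H^{2-\gamma}(0,T)$ boundedly. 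At the rough endpoint, the fractional integration-by-parts identity $\dual{\D_{0+}^\gamma v,\phi}_{(0,T)}=\dual{v,\D_{T-}^\gamma\phi}_{(0,T)}$, combined with the $H^\gamma\to L^2$ bound for $\D_{T-}^\gamma$ (delivered by \cref{lem:coer} for $\gamma<1/2$ and by a derivative lift otherwise), produces $\D_{0+}^\gamma\colon L^2(0,T)\to H^{-\gamma}(0,T)$ boundedly. Real $(\gamma/2,2)$-interpolation then gives $\D_{0+}^\gamma\colon[L^2,{}_0H^2]_{\gamma/2,2}\to[H^{-\gamma},H^{2-\gamma}]_{\gamma/2,2}=L^2(0,T)$, which is the desired bound once the standard Sobolev identification of the target space is invoked. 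The lower bound is handled symmetrically: interpolate the trivial $\D_{0+}^{0}=\operatorname{id}\colon L^2\to L^2$ with the isometry $\D_{0+}^{-2}\colon L^2(0,T)\to{}_0H^2(0,T)$ (for which $\nm{\D_{0+}^{-2}w}_{{}_0H^2}=\nm{w}_{L^2}$) to obtain $\D_{0+}^{-\gamma}\colon L^2\to{}_0H^\gamma$ bounded, and then apply this to $w=\D_{0+}^\gamma v$.

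The principal obstacle is that $\D_{0+}^\gamma$ depends on $\gamma$, so a single interpolation across the whole range $(0,2)$ is not directly available; one must interpolate for each fixed $\gamma$ between fixed spaces while carefully tracking the $\gamma$-dependent constants. A technically cleaner alternative that I would prefer is a three-stage bootstrap. The case $\gamma\in(0,1/2)$ follows directly from \cref{lem:coer}, combined with the Hardy-type bound $\nm{v}_{L^2}=\nm{\D_{0+}^{-\gamma}\D_{0+}^\gamma v}_{L^2}\leq C\nm{\D_{0+}^\gamma v}_{L^2}$ via Young's inequality and the identification ${}_0H^\gamma(0,T)=H^\gamma(0,T)$ noted in the text. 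The case $\gamma\in[1,2)$ reduces to $\gamma-1\in[0,1)$ through $\D_{0+}^\gamma v=\D_{0+}^{\gamma-1}v'$ with $v'\in{}_0H^{\gamma-1}$ together with the Poincaré-type equivalence $\nm{v}_{{}_0H^\gamma}\sim\nm{v'}_{{}_0H^{\gamma-1}}$. The remaining window $\gamma\in[1/2,1)$ is then closed by real interpolation between some $\gamma_0\in(0,1/2)$ and $\gamma=1$, both of which are already in hand. The most delicate point is uniform control of the interpolation constants near the half-integer values $\gamma=1/2$ and $\gamma=3/2$, where the boundary-condition structure of ${}_0H^\gamma(0,T)$ undergoes a transition.
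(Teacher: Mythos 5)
The paper does not actually prove \cref{lem:key-equi}; it is imported from \cite{luo_li_xie_convergence_2019}, so there is no in-paper argument to compare yours against and I can only assess your plan on its own terms. Your overall picture --- $\D_{0+}^{\gamma}$ as an isomorphism from ${}_0H^{\gamma}(0,T)$ onto $L^2(0,T)$ with inverse $\D_{0+}^{-\gamma}$ --- is the right one, and two pieces of your bootstrap are sound in outline: the range $\gamma\in(0,1/2)$ via \cref{lem:coer}, Young's inequality and the identification ${}_0H^{\gamma}=H^{\gamma}$, and the reduction of $\gamma\in[1,2)$ to $\gamma-1$ through $\D_{0+}^{\gamma}v=\D_{0+}^{\gamma-1}v'$ (granting the equivalence $\nm{v}_{{}_0H^{\gamma}}\sim\nm{v'}_{{}_0H^{\gamma-1}}$, which itself needs a Lions--Magenes/reiteration argument). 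The genuine gap is that every argument you offer for the remaining window $\gamma\in[1/2,1)$ --- and hence, after the reduction, for $[3/2,2)$ --- rests on interpolating a \emph{family} of operators indexed by $\gamma$ as though it were a single operator. The interpolation theorem lets you pass one fixed operator, bounded on both endpoint couples, to the intermediate spaces; it does not let you deduce boundedness of $\D_{0+}^{-\gamma}$ from that of $\mathrm{id}=\D_{0+}^{0}$ and $\D_{0+}^{-2}$, nor from that of $\D_{0+}^{-\gamma_0}$ and $\D_{0+}^{-1}$. You name this obstacle explicitly, but neither of your two routes overcomes it: what is actually required is the identification of $\D_{0+}^{-\gamma}L^2(0,T)$ with the real interpolation space of the domain scale (a Komatsu/Lions--Magenes-type fractional-power result, or an explicit $K$-functional computation), and that identification is essentially the content of the lemma being proved.

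A second, independent defect lies in your endpoint bounds for the ``single-$\gamma$ interpolation'' route. The rough endpoint $\D_{0+}^{\gamma}\colon L^2(0,T)\to H^{-\gamma}(0,T)$ is derived from the claim that $\D_{T-}^{\gamma}\colon H^{\gamma}(0,T)\to L^2(0,T)$ is bounded for all $\gamma\in(0,2)$; this fails once $\gamma\geqslant 1/2$, since for $\phi\equiv 1\in H^{\gamma}(0,T)$ one has $\D_{T-}^{\gamma}\phi=(T-t)^{-\gamma}/\Gamma(1-\gamma)\notin L^2(0,T)$. The correct duality requires test functions vanishing suitably at $t=T$, which shrinks the test space, enlarges the target space of the endpoint map, and therefore invalidates the identification $[H^{-\gamma},H^{2-\gamma}]_{\gamma/2,2}=L^2$ exactly as you invoke it. Likewise, Young's convolution inequality only yields $\D_{0+}^{\gamma-2}v''\in L^2(0,T)$, not membership in $H^{2-\gamma}(0,T)$; upgrading to the Sobolev norm of the fractional integral is again a statement of the same nature as the lemma itself (and is delicate precisely at the half-integer orders you flag). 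So both endpoints of the first route need repair, and the final interpolation step of the bootstrap is not valid as stated.
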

	\section{Weak Solution and Regularity}
	\label{sec:regu}
	This section is   to revisit the solution regularity of problem \cref{eq:model} 
	in terms of proper Sobolev spaces and establish new regularity results in Besov spaces. 
	
	Following \cite{li_luo_xie_analysis_2019,li_space-time_2009}, we first introduce the 
	weak solution to problem \cref{eq:model}. To do so, set
	\begin{equation}\label{X-def}
	\mathcal X :={} H^{\alpha/2}(0,T;L^{2}(\Omega)) \cap L^{2}(0,T;\dot H^1(\Omega)),
	\end{equation}
	and endow this space with the norm
	\[
	\begin{split}
	\nm{\cdot}_{\mathcal X} :={}&
	\left(  \snm{\cdot}_{H^{\alpha/2}(0,T;L^{2}(\Omega))}^2 +
	\nm{\cdot}_{L^{2}(0,T;\dot H^1(\Omega))}^2\right)^{1/2}.
	\end{split}
	\]
	Assuming that $f+\D_{0+}^\alpha u_0  
	\in \mathcal X^*$, we call $ u \in \mathcal X $ 
	a weak solution to problem \cref{eq:model} if
	\begin{equation}
	\label{eq:weak_form}
	\dual{ \D_{0+}^\alpha u, v }_{ H^{\alpha/2}( 0,T;\dot H^1(\Omega) ) } +
	\dual{ \nabla u, \nabla v }_{ H^{\alpha/2}( 0,T;L^2(\Omega) ) } =
	\dual{f+\D_{0+}^\alpha u_0,v}_{\mathcal X}\quad \forall\, v\in \mathcal X.
	\end{equation}
	
	As mentioned in \cite[Remark 2.2]{li_luo_xie_analysis_2019}, the well-posedness of the weak formulation \cref{eq:weak_form} follows from the Lax--Milgram theorem and \cref{lem:coer}. More precisely, if $f+\D_{0+}^\alpha u_0  
	\in \mathcal X^*$, then problem \cref{eq:model} admits 
	a unique weak solution in the sense of \cref{eq:weak_form} such that
	\[
	\nm{u}_{\mathcal X} \leqslant 
	C_{\alpha}
	\nm{f+\D_{0+}^\alpha u_0 }_{\mathcal X^*}.
	\]
	
	To establish   more elaborate regularity estimates, we apply the Galerkin method that reduces \cref{eq:weak_form} to a family of ordinary differential equations, to 
	which the solutions  can be used to recover the weak solution to \cref{eq:weak_form} through a series expression; see the lemma below.
	\begin{prop}\label{lem:decomp-u}
		Assume $f\in L^2(0,T;\dot H^{-1}(\Omega))$ and $u_0\in\dot H^{\gamma}(\Omega)$ where $\gamma=-1$ if $0<\alpha<1/2$, and $\gamma>1-1/\alpha$ if $1/2\leqslant \alpha<1$. The solution to \cref{eq:weak_form} is given by $u = \sum_{n=0}^{\infty}y_n\phi_n$, where $y_n\in H^{\alpha/2}(0,T)$ satisfies
		\begin{equation}\label{eq:weak-form-yn}
		\left\langle\mathrm{D}_{0+}^{\alpha}\left(y_n-y_{n,0}\right), z\right\rangle_{H^{\alpha / 2}(0, T)}+\lambda_n\langle y_n, z\rangle_{(0, T)}=\langle f_n, z\rangle_{(0, T)},
		\end{equation}
		for all $z\in H^{\alpha/2}(0,T)$, where $y_{n,0} = \dual{u_0,\phi_n}_{\dot H^{\gamma}(\Omega)}$ and $f_n = \dual{f,\phi_n}_{\dot H^{1}(\Omega)}$.
	\end{prop}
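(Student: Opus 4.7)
The plan is to construct the weak solution by a Fourier--Galerkin expansion in the Dirichlet Laplacian eigenbasis $\{\phi_n\}$: for each $n$, solve the scalar fractional ODE \cref{eq:weak-form-yn} uniquely in $H^{\alpha/2}(0,T)$, derive $n$-dependent a priori estimates that are summable, verify that $\sum_n y_n\phi_n\in\mathcal X$ satisfies \cref{eq:weak_form} by testing against tensor products, and finally invoke the Lax--Milgram uniqueness for \cref{eq:weak_form} (as quoted just after the weak formulation) to identify the series with $u$.

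For existence of each $y_n$, I would apply Lax--Milgram to the bilinear form
\[
a_n(v,z):=\dual{\D_{0+}^{\alpha/2}v,\D_{T-}^{\alpha/2}z}_{(0,T)}+\lambda_n\dual{v,z}_{(0,T)}
\]
on $H^{\alpha/2}(0,T)$. Since $\alpha/2\in(0,1/2)$, \cref{lem:coer} supplies both continuity and coercivity with constant $\cos(\alpha\pi/2)$. The right-hand side functional pairs $z\in H^{\alpha/2}(0,T)$ against $f_n$ and against $\D_{0+}^{\alpha}(y_{n,0}\cdot 1)=y_{n,0}\,t^{-\alpha}/\Gamma(1-\alpha)$, the latter being controlled via a Hardy-type embedding of $t^{-\alpha}$ into $H^{\alpha/2}(0,T)^*$. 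Testing \cref{eq:weak-form-yn} with $z=y_n$ and applying Young's inequality would then yield the a priori bound
\[
|y_n|_{H^{\alpha/2}(0,T)}^2+\lambda_n\nm{y_n}_{L^2(0,T)}^2\leqslant C_\alpha\bigl(A_n|y_{n,0}|^2+\lambda_n^{-1}\nm{f_n}_{L^2(0,T)}^2\bigr),
\]
where $A_n=A_n(\lambda_n)$ encodes the effective $H^{\alpha/2}(0,T)^*$-norm of $t^{-\alpha}$ after absorbing the $\lambda_n$-coercivity in $L^2$.

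Summing in $n$, the source contribution is controlled by $\nm{f}_{L^2(0,T;\dot H^{-1}(\Omega))}^2$, finite by hypothesis. The initial-data contribution $\sum_n A_n|y_{n,0}|^2$ is precisely where the dichotomy in the hypothesis on $\gamma$ enters: for $\alpha<1/2$ the function $t^{-\alpha}$ lies in $L^2(0,T)$, so splitting off the $\lambda_n$-coercive part gives $A_n\lesssim \lambda_n^{-1}$, compatible with $\gamma=-1$; for $\alpha\geqslant 1/2$ a sharper interpolation argument yields $A_n\lesssim \lambda_n^{1-1/\alpha}$, matching $\gamma>1-1/\alpha$. Once $u:=\sum_n y_n\phi_n\in\mathcal X$ is secured, I would insert separable test functions $v(t,x)=z(t)\phi_m(x)$ into \cref{eq:weak_form}: orthonormality of $\{\phi_m\}$ collapses the resulting identity to \cref{eq:weak-form-yn} with $n=m$, and density of such tensor products in $\mathcal X$ closes the identification; Lax--Milgram uniqueness then identifies the series with $u$.

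The main obstacle is obtaining the sharp scaling $A_n\lesssim \lambda_n^{1-1/\alpha}$ in the regime $\alpha\geqslant 1/2$, where $y_{n,0}\cdot 1\notin H^{\alpha/2}(0,T)$ and $\D_{0+}^{\alpha}(y_{n,0}\cdot 1)$ is a genuinely singular distribution, so a naive Hardy estimate is too crude. A cleaner alternative, suggested by the explicit formula \cref{eq:sol-ode-intro} and the scalar analysis referenced in \cref{rem:regu-ML}, is the decomposition $y_n=y_{n,0}E_{\alpha,1}(-\lambda_n t^\alpha)+\widetilde y_n$, with $\widetilde y_n$ solving a homogeneous-initial version of \cref{eq:weak-form-yn} driven by $f_n$; the $\lambda_n$-scaling of the Mittag--Leffler part can then be read off directly from its sharp decay, which is what fixes the exponent $1-1/\alpha$ in the hypothesis on $\gamma$.
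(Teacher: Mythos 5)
Your proposal is correct and follows essentially the same route as the paper: the paper's proof of \cref{lem:decomp-u} simply defers to the Galerkin argument of \cite[Theorem 3.1]{li_luo_xie_analysis_2019}, which is exactly the per-mode Lax--Milgram construction via \cref{lem:coer}, the summable $\lambda_n$-weighted a priori bounds, and the identification through tensor-product test functions and uniqueness that you describe. The one point to tighten is that in the regime $1/2\leqslant\alpha<1$ the initial-data functional scales as $A_n\lesssim_{\gamma}\lambda_n^{\gamma}$ for each fixed $\gamma>1-1/\alpha$, with a constant that blows up as $\gamma\downarrow 1-1/\alpha$, rather than as the borderline $\lambda_n^{1-1/\alpha}$ itself --- which is precisely why the hypothesis is a strict inequality.
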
	 
	\begin{proof}
		The proof here is actually in line with that of \cite[Theorem 3.1]{li_luo_xie_analysis_2019}, 
		where the case $0<\alpha<1/2$ has been considered. 
		The case of $1/2\leqslant \alpha<1$ follows similarly.
		%
	\end{proof}
	\subsection{Regularity in Sobolev space}
	We first revisit the Sobolev regularity of the solution to \cref{eq:weak_form}. Thanks to \cref{lem:decomp-u}, this can be done by investigating problem \cref{eq:weak-form-yn}, which, in a general form, is equivalent to
	\begin{equation}\label{eq:ode-y-f-0}
	\D_{0+}^{\alpha}(y-y_0)+\lambda y=g,
	\end{equation}
	where $\lambda>0,\,y_0\in \R$ and $g\in L^2(0,T)$. In fact, in \cite[Lemmas 3.1 and 3.2]{li_luo_xie_analysis_2019} we have established corresponding regularity results via a variational approach:
	\begin{lem}[\cite{li_luo_xie_analysis_2019}]
		If $y_0=0$, then the unique solution $y$ to \cref{eq:ode-y-f-0} satisfies
		\[
		\nm{y}_{{}_0H^{\alpha}(0,T)}
		+\lambda^{1/2}\nm{y}_{H^{\alpha/2}(0,T)}
		+\lambda\nm{y}_{L^2(0,T)}\leqslant C_{\alpha,T}\nm{g}_{L^2(0,T)};
		\]
		and if $g=0$, then 
		\begin{equation}\label{eq:regu-y0}
		\nm{y}_{H^{\alpha}(0,T)}
		+\lambda^{1/2}\nm{y}_{H^{\alpha/2}(0,T)}
		+\lambda\nm{y}_{L^2(0,T)}\leqslant C_{\gamma,\alpha,T}\lambda^{\gamma/2}\snm{y_0},
		\end{equation}
		where $\gamma=0$ if $0<\alpha<1/2$, and 
		$2-1/\alpha<\gamma\leqslant 1$ if $1/2\leqslant \alpha<1$.
	\end{lem}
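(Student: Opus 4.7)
I would approach this lemma in two stages, handling the $y_0 = 0$ and $g = 0$ cases separately.

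For the $y_0 = 0$ case, I would start from the weak formulation on ${}_0H^{\alpha/2}(0,T)$, seeking $y$ such that $\dual{\D_{0+}^\alpha y, z}_{H^{\alpha/2}(0,T)} + \lambda \dual{y, z}_{(0,T)} = \dual{g, z}_{(0,T)}$ for every admissible $z$. Testing with $z = y$ and invoking \cref{lem:coer} with parameter $\alpha/2$ yields the coercivity inequality $\cos(\alpha\pi/2)\snm{y}_{H^{\alpha/2}(0,T)}^2 + \lambda\nm{y}_{L^2(0,T)}^2 \leqslant \nm{g}_{L^2(0,T)}\nm{y}_{L^2(0,T)}$, from which the bounds on $\lambda\nm{y}_{L^2}$ and $\lambda^{1/2}\snm{y}_{H^{\alpha/2}}$ follow immediately. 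To recover the ${}_0H^\alpha$ bound, I use the strong form $\D_{0+}^\alpha y = g - \lambda y$ to deduce $\nm{\D_{0+}^\alpha y}_{L^2} \leqslant 2\nm{g}_{L^2}$, and then invoke \cref{lem:key-equi}.

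For the $g = 0$ case, the naive strategy of setting $w = y - y_0$ and testing with $w$ only produces $\nm{w}_{L^2} \leqslant C|y_0|\sqrt{T}$ and $\snm{w}_{H^{\alpha/2}} \leqslant C\lambda^{1/2}|y_0|$, which are both too weak because they miss the $\lambda$-decay present in $y$ itself (driven by the boundary layer of the underlying Mittag--Leffler profile). The key step is therefore a duality argument directly on $y$. For arbitrary $\xi \in L^2(0,T)$ with unit norm, let $v$ solve the adjoint problem $\D_{T-}^\alpha v + \lambda v = \xi$ with $v(T) = 0$; the right-sided analogue of the $y_0 = 0$ estimate gives $\nm{v}_{L^2} \leqslant C\nm{\xi}_{L^2}/\lambda$ and $\snm{v}_{H^{\alpha/2}} \leqslant C\nm{\xi}_{L^2}/\sqrt{\lambda}$. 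Pairing the two strong equations and integrating by parts, using $y(0) = y_0$ and $v(T) = 0$ to track the surviving boundary term ($\D_{T-}^{\alpha-1}v$ evaluated at $0$), one arrives at the identity
\[
\dual{y,\xi}_{(0,T)} = \frac{y_0}{\Gamma(1-\alpha)}\int_0^T s^{-\alpha} v(s)\dd s.
\]

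For $0 < \alpha < 1/2$ the weight $s^{-\alpha}$ lies in $L^2(0,T)$, and a single application of Cauchy--Schwarz combined with the adjoint estimate delivers $\lambda\nm{y}_{L^2} \leqslant C|y_0|$, corresponding to $\gamma = 0$. For $1/2 \leqslant \alpha < 1$ the weight $s^{-\alpha}$ only lies in $L^p(0,T)$ for $p < 1/\alpha$, so I would pair it with $v \in L^q$ via H\"older's inequality (with $1/p + 1/q = 1$, forcing $q > 1/(1-\alpha)$), bound $\nm{v}_{L^q}$ by the one-dimensional Sobolev embedding $H^s(0,T) \hookrightarrow L^q(0,T)$ valid for $s > 1/2 - 1/q$, and interpolate the two adjoint estimates to obtain $\nm{v}_{H^s} \leqslant C\nm{\xi}_{L^2}\lambda^{s/\alpha - 1}$. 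Choosing $s = \alpha\gamma/2$ and requiring simultaneously $\alpha\gamma/2 > 1/2 - 1/q$ and $q > 1/(1-\alpha)$ forces precisely the threshold $\gamma > 2 - 1/\alpha$, and yields $\lambda\nm{y}_{L^2} \leqslant C|y_0|\lambda^{\gamma/2}$.

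Once the $L^2$ decay is in hand, the remaining pieces assemble quickly. From $\D_{0+}^\alpha(y - y_0) = -\lambda y$ and \cref{lem:key-equi} one obtains $\nm{y - y_0}_{{}_0H^\alpha} \leqslant C\nm{\D_{0+}^\alpha(y - y_0)}_{L^2} = C\lambda\nm{y}_{L^2} \leqslant C|y_0|\lambda^{\gamma/2}$; since $y_0$ is constant, the seminorms $\snm{y}_{H^\alpha}$ and $\snm{y - y_0}_{H^\alpha}$ coincide, delivering $\nm{y}_{H^\alpha} \leqslant C|y_0|\lambda^{\gamma/2}$. Interpolating between $\nm{y}_{L^2}$ and $\snm{y}_{H^\alpha}$ then yields $\snm{y}_{H^{\alpha/2}} \leqslant C|y_0|\lambda^{(\gamma - 1)/2}$, i.e.\ the required $\lambda^{1/2}\snm{y}_{H^{\alpha/2}}$ bound. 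The main obstacle is clearly the duality step in the regime $\alpha \geqslant 1/2$: balancing the singularity of $s^{-\alpha}$ against the Sobolev smoothness of $v$ is delicate, and the admissibility window for $\gamma$ is precisely where the distinctive lower bound $2 - 1/\alpha$ makes its appearance in the statement.
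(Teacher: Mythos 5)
Your proof is essentially correct, but note first that the paper itself offers no proof of this lemma: it is quoted verbatim from \cite{li_luo_xie_analysis_2019}, so the comparison has to be with the ``variational approach'' of that reference and with the way the present paper later sharpens \cref{eq:regu-y0} in \cref{lem:regu-y0}. Your first half (test with $y$, use \cref{lem:coer} for coercivity to get the $\lambda\nm{y}_{L^2}$ and $\lambda^{1/2}\snm{y}_{H^{\alpha/2}}$ bounds, then read $\nm{\D_{0+}^{\alpha}y}_{L^2(0,T)}\leqslant 2\nm{g}_{L^2(0,T)}$ off the strong form and invoke \cref{lem:key-equi}) is exactly the standard variational argument. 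Your second half takes a genuinely different route from the one the paper uses for its own refinement: there the authors insert the explicit solution $y=y_0E_{\alpha,1}(-\lambda t^{\alpha})$ from \cref{eq:ode-y0-y}, apply the pointwise decay \cref{eq:Eabz-est}, and evaluate the resulting time integrals by splitting at $t=\lambda^{-1/\alpha}$; you never touch the Mittag--Leffler function and instead extract the $\lambda$-decay by duality. I checked your key identity $\dual{y,\xi}_{(0,T)}=\tfrac{y_0}{\Gamma(1-\alpha)}\int_0^T s^{-\alpha}v(s)\dd s$: the only surviving boundary term is indeed $\D_{T-}^{-(1-\alpha)}v(0)$, since $y-y_0$ vanishes at $0$ and $v$ at $T$, and the H\"older/Sobolev-embedding balancing does reproduce the exact threshold $\gamma>2-1/\alpha$, including the blow-up of $C_{\gamma,\alpha,T}$ as $\gamma\to 2-1/\alpha$ that the paper remarks on right after the lemma. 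What the explicit route buys in exchange is uniformity and the finer $\epsilon$-dependent estimates of \cref{lem:regu-y0}; what your route buys is independence from the solution formula. Two caveats, which you largely inherit from the statement rather than introduce: the quantities you actually control are the seminorms $\snm{\cdot}_{H^{\alpha/2}(0,T)}$ and $\snm{\cdot}_{H^{\alpha}(0,T)}$, and upgrading them (together with the step from $\nm{y}_{L^2(0,T)}\leqslant C\lambda^{\gamma/2-1}\snm{y_0}$) to the full norms in \cref{eq:regu-y0} with a $\lambda$-independent constant requires $\lambda$ bounded away from zero, exactly as the paper's refinement makes explicit through the hypothesis $\lambda\geqslant\lambda_*>0$; and the fractional integration by parts underlying your duality identity deserves a one-line justification that $y-y_0$ is regular enough for the boundary contribution at $t=0$ to vanish.
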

	This lemma, together with \cref{lem:decomp-u}, implies the following results (see \cite[Theorems 3.1 and 3.2]{li_luo_xie_analysis_2019}):
	\begin{thm}[\cite{li_luo_xie_analysis_2019}]
		If $u_0=0$ and $f\in L^2(0,T;\dot H^{\gamma}(\Omega))$ with $-1\leqslant \gamma\leqslant 0$, then the weak solution defined by \cref{eq:weak_form} satisfies
		\[
		\nm{u}_{{}_0H^{\alpha(1+\gamma)/2}(0,T;L^{2}(\Omega))}
		+\nm{u}_{H^{\alpha/2}(0,T;\dot H^{1+\gamma}(\Omega))}
		+ \nm{u}_{L^2(0,T;\dot H^{2+\gamma}(\Omega))}
		\leqslant C_{\alpha,T}\nm{f}_{L^2(0,T;\dot H^{\gamma}(\Omega))}.
		\]
		If $f=0,\,0<\alpha<1/2$ and $u_0\in \dot H^{\gamma}(\Omega)$ with $-1\leqslant \gamma\leqslant 0$, then
		\begin{equation}\label{eq:regu-u0-1}
		\nm{u}_{H^{\alpha(1+\gamma)/2}(0,T;L^{2}(\Omega))}
		+\nm{u}_{H^{\alpha/2}(0,T;\dot H^{1+\gamma}(\Omega))}
		+ \nm{u}_{L^2(0,T;\dot H^{2+\gamma}(\Omega))}
		\leqslant C_{\alpha,T}\nm{u_0}_{\dot H^{\gamma}(\Omega)}.
		\end{equation}
		Besides, if $f=0,\,1/2\leqslant \alpha<1$ and $u_0\in L^{2}(\Omega)$, then
		\begin{equation}\label{eq:regu-u0-2}
		\nm{u}_{H^{\alpha(1+\gamma)/2}(0,T;L^{2}(\Omega))}
		+\nm{u}_{H^{\alpha/2}(0,T;\dot H^{1+\gamma}(\Omega))}
		+ \nm{u}_{L^2(0,T;\dot H^{2+\gamma}(\Omega))}
		\leqslant C_{\gamma,\alpha,T}\nm{u_0}_{L^{2}(\Omega)},
		\end{equation}
		where $2-1/\alpha<\gamma\leqslant 1$.
	\end{thm}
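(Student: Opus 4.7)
The plan is to reduce everything to the scalar fractional ODE \cref{eq:ode-y-f-0} via the Galerkin decomposition from \cref{lem:decomp-u}, and then sum the quoted mode-by-mode estimates over $n$ with suitable $\lambda_n$-weights. Writing $u=\sum_{n=0}^{\infty}y_n\phi_n$, each $y_n$ solves \cref{eq:ode-y-f-0} with $\lambda=\lambda_n$, $y_0=y_{n,0}$ and $g=f_n$; by the definition of $\dot H^\sigma(\Omega)$ and \cref{eq:def-HX}, each vector-valued norm on the left-hand side is an $\ell^2$-sum of scalar temporal norms weighted by powers of $\lambda_n$, so the whole theorem reduces to three scalar bounds for $y_n$ followed by a weighted summation.

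For part~1 ($u_0=0$) I would apply the $y_0=0$ branch of the preceding lemma, which yields
\[
\nm{y_n}_{{}_0H^{\alpha}(0,T)}^2+\lambda_n\nm{y_n}_{H^{\alpha/2}(0,T)}^2+\lambda_n^2\nm{y_n}_{L^2(0,T)}^2\leqslant C_{\alpha,T}\nm{f_n}_{L^2(0,T)}^2,
\]
multiply through by $\lambda_n^\gamma$ for $\gamma\in[-1,0]$ and sum in $n$; this instantly produces the bounds on $\nm{u}_{H^{\alpha/2}(0,T;\dot H^{1+\gamma}(\Omega))}$ and $\nm{u}_{L^2(0,T;\dot H^{2+\gamma}(\Omega))}$. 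For the remaining term $\nm{u}_{{}_0H^{\alpha(1+\gamma)/2}(0,T;L^2(\Omega))}$ I would use temporal interpolation: combine $\nm{y_n}_{{}_0H^\alpha}\leqslant C\nm{f_n}_{L^2}$ with $\lambda_n\nm{y_n}_{L^2}\leqslant C\nm{f_n}_{L^2}$ via the $K$-method at level $\theta=(1+\gamma)/2\in[0,1/2]$, invoking the identification ${}_0H^{\alpha(1+\gamma)/2}(0,T)=[L^2(0,T),{}_0H^\alpha(0,T)]_{\theta,2}$ obtained by reiteration from the definition in the preliminaries, and then absorbing one spare factor of $\lambda_n^{-1}$ using $\lambda_n\geqslant\lambda_0>0$ before summing.

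For parts~2 and~3 ($f=0$) the same scheme applies with the $g=0$ branch of the ODE lemma, whose internal exponent is $0$ for $0<\alpha<1/2$ (part~2) and lies in $(2-1/\alpha,1]$ for $1/2\leqslant\alpha<1$ (part~3). After squaring and attaching the weight of $\lambda_n$ dictated by the targeted spatial index $\dot H^{1+\gamma}$ or $\dot H^{2+\gamma}$ on the left, summation over $n$ turns the right-hand side into $\nm{u_0}_{\dot H^{\gamma}(\Omega)}^2$ in part~2 or $\nm{u_0}_{L^2(\Omega)}^2$ in part~3, matching the two dichotomous statements. The temporal index $\alpha(1+\gamma)/2$ in the first term is produced as before by interpolation between $L^2(0,T)$ and $H^{\alpha}(0,T)$, where now the subscript~$0$ can be dropped because the ODE estimate provides the plain $H^\alpha$ bound when $y_{n,0}\neq 0$.

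The main obstacle is the uniform control of the interpolation constants entering the first norm as $\gamma$ varies over the stated intervals, together with the need to convert $\lambda_n^{\gamma-1}$-type scalar bounds into $\lambda_n^\gamma$-summable ones via $\lambda_n\geqslant\lambda_0>0$; this causes the overall constant to depend implicitly on $\Omega$ even when the statement only displays $C_{\alpha,T}$. Once these technical points are handled, the argument is a direct continuation of the proof of \cite[Theorems~3.1 and~3.2]{li_luo_xie_analysis_2019}, extended from $0<\alpha<1/2$ to $1/2\leqslant\alpha<1$ by choosing the free exponent in the $g=0$ branch of the ODE lemma appropriately, and no further analytical input is required beyond the Galerkin expansion and the ODE bounds already at hand.
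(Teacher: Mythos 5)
Your strategy is exactly the one the paper intends: this theorem is quoted from \cite{li_luo_xie_analysis_2019}, and the text justifies it in a single line by combining the scalar ODE estimates of the preceding lemma with the eigenfunction decomposition of \cref{lem:decomp-u} --- precisely your mode-by-mode weighted summation. For the first two assertions your bookkeeping closes: multiplying the squared ODE bounds by $\lambda_n^{\gamma}$ and summing produces the $H^{\alpha/2}(0,T;\dot H^{1+\gamma}(\Omega))$ and $L^2(0,T;\dot H^{2+\gamma}(\Omega))$ terms, and the temporal index $\alpha(1+\gamma)/2$ follows from the reiteration identity ${}_0H^{\alpha\theta}(0,T)=[L^2(0,T),{}_0H^{\alpha}(0,T)]_{\theta,2}$ (resp.\ its unsubscripted analogue) with $\theta=(1+\gamma)/2$, the spare factor of $\lambda_n^{-1}$ being absorbed via $\lambda_n\geqslant\lambda_0>0$ as you say. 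The interpolation constant you worry about degenerates only at the endpoint $\gamma=-1$, where the target norm is $L^2$ and the endpoint estimate applies directly, so that is not a real obstruction (though, as you note, the absorption step makes the constant depend on $\Omega$ through $\lambda_0$).

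Your treatment of the third assertion, however, contains a sign slip that makes the step fail as written. For $1/2\leqslant\alpha<1$ the $g=0$ branch \cref{eq:regu-y0} gives $\lambda_n\nm{y_n}_{L^2(0,T)}\leqslant C\lambda_n^{\gamma/2}\snm{y_{n,0}}$, i.e.\ $\lambda_n^{2-\gamma}\nm{y_n}^2_{L^2(0,T)}\leqslant C\snm{y_{n,0}}^2$. Attaching the weight $\lambda_n^{2+\gamma}$ ``dictated by $\dot H^{2+\gamma}(\Omega)$'', as you propose, therefore leaves $\sum_n\lambda_n^{2\gamma}\snm{y_{n,0}}^2=\nm{u_0}^2_{\dot H^{2\gamma}(\Omega)}$ on the right, not $\nm{u_0}^2_{L^2(\Omega)}$, and for $\gamma>0$ this is not controlled by $\nm{u_0}_{L^2(\Omega)}$. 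The summation only closes for the indices $2-\gamma$, $1-\gamma$ and $\alpha(1-\gamma)/2$; indeed $u_0\in L^2(\Omega)$ cannot give $u\in L^2(0,T;\dot H^{2+\gamma}(\Omega))$ with $\gamma>0$, since the maximal spatial gain from the data is $\beta=\min\{2,1/\alpha\}$ by \cref{thm:regu-u0}. So either \cref{eq:regu-u0-2} is to be read with $\gamma$ replaced by $-\gamma$ (the displayed exponents appear to have been carried over from \cref{eq:regu-u0-1} without the sign flip that the scaling $\lambda^{\gamma/2}$ in \cref{eq:regu-y0} forces), or your part~3 does not prove the displayed inequality. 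A correct writeup must state the weights explicitly and verify that they reproduce $\nm{u_0}_{L^2(\Omega)}$ on the right-hand side.
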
	
	
	However, we mention that the implicit constants in \cref{eq:regu-y0,eq:regu-u0-2} will blow up when $\gamma\to2-1/\alpha$ and that both \cref{eq:regu-y0,eq:regu-u0-1} are not optimal. 
	Therefore, in this section, we mainly focus on improving \cref{eq:regu-y0,eq:regu-u0-1} and finding explicit relation with respect to the constant $\gamma$, by using the Mittag-Leffler function \cite{mittag_1903}
	\begin{equation}
	\label{eq:E-ab}
	E_{\alpha,\beta}(z) := \sum_{k=0}^\infty
	\frac{z^k}{\Gamma(\alpha k + \beta)}, 
	\quad z \in \mathbb C,\quad \beta\in\R.
	\end{equation}
	Given any $ t > 0 $, it is well-known that (cf. \cite{kilbas_theory_2006})
	\begin{align}
		\snm{E_{\alpha,\beta}(-t)} \leqslant{}&
		\frac{C_{\alpha,\beta}}{1+t}.
		\label{eq:Eabz-est} 
	\end{align}
	In addition, by using the Laplace transform, it is not hard to find the 
	solution to \cref{eq:ode-y-f-0} with $g=0$:
	\begin{equation}\label{eq:ode-y0-y}
	y(t)=y_0E_{\alpha,1}(-\lambda t^\alpha),\quad 0\leqslant t\leqslant T.
	\end{equation}
	\begin{lem}
		\label{lem:regu-y0}
		Assume $\lambda\geqslant \lambda_*>0$, then the function $y(t)$ 
		defined by \cref{eq:ode-y0-y} satisfies 
		\begin{equation}\label{eq:regu1-y-f-0-<1/2}
		\eta(\epsilon)\lambda^{\beta/2-1-\epsilon} \nm{y}_{H^{\alpha}(0,T)} +
		\lambda^{(\beta-1)/2}\nm{y}_{H^{\alpha/2}(0,T)} +
		\eta(\epsilon)\lambda^{\beta/2-\epsilon} \nm{y}_{L^2(0,T)} 
		\leqslant {}C_{\alpha,\lambda_*,T}\snm{y_0},
		\end{equation}
		where $\beta=\min\{2,1/\alpha\}$ and
		\begin{equation}
		\label{eq:eta-eps}
		\left\{
		\begin{aligned}
		&\eta(\epsilon ) = 1,&&\epsilon = 0,&&\text{ if }\alpha\neq 1/2,\\
		&\eta(\epsilon) = \sqrt{\epsilon},&&\epsilon\in(0,1/2],&&\text{ if }\alpha= 1/2.
		\end{aligned}
		\right.
		\end{equation}
		Moreover, we have
\begin{numcases}{}
\label{eq:y0-L2}
\epsilon^{3/2}
\snm{y}_{H^{1/2-\epsilon}(0,T)}
\leqslant {}C_{\alpha,T}	
\snm{y_0},\quad 0<\epsilon\leqslant 1/2,\\
\label{eq:y0-Hs}
\sqrt{2\gamma-\gamma^2}
\nm{y}_{H^{(1+\alpha\gamma)/2}(0,T)}
\leqslant {}C_{\alpha,\lambda_*}	
\lambda^{\gamma/2}\snm{y_0},\quad 0<\gamma<2.
\end{numcases}
	\end{lem}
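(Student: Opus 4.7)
The plan is to work with the explicit representation $y(t)=y_0E_{\alpha,1}(-\lambda t^\alpha)$ from \eqref{eq:ode-y0-y}, together with the Mittag--Leffler decay \eqref{eq:Eabz-est}, the fractional ODE \eqref{eq:ode-y-f-0} satisfied by $y$, and \cref{lem:key-equi}. The strategy is: first establish the $L^2$ and $H^\alpha$ bounds directly; then obtain the intermediate $H^{\alpha/2}$ norm by interpolation; and finally handle the two endpoint estimates \eqref{eq:y0-L2} and \eqref{eq:y0-Hs} separately.

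For the $L^2$ part of \eqref{eq:regu1-y-f-0-<1/2}, I would start from $\nm{y}_{L^2(0,T)}^2\leq C_\alpha y_0^2\int_0^T(1+\lambda t^\alpha)^{-2}\,\mathrm{d}t$ and apply the substitution $u=\lambda t^\alpha$, reducing matters to $\frac{1}{\alpha}\lambda^{-1/\alpha}\int_0^{\lambda T^\alpha}u^{1/\alpha-1}(1+u)^{-2}\,\mathrm{d}u$. Three regimes emerge: for $\alpha>1/2$ the integrand is in $L^1(0,\infty)$ giving $\lambda^{-1/\alpha}$ (so $\beta=1/\alpha$); for $\alpha<1/2$ the tail dominates yielding $\lambda^{-2}$ (so $\beta=2$); for $\alpha=1/2$ a logarithmic factor appears and $\sqrt{\epsilon}\,\lambda^{-\epsilon}\sqrt{\log\lambda}$ is uniformly bounded for $\lambda\geq\lambda_*$, which is exactly what $\eta(\epsilon)=\sqrt{\epsilon}$ is designed to absorb. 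For the $H^\alpha$ bound, the ODE \eqref{eq:ode-y-f-0} with $g=0$ gives $\D_{0+}^\alpha(y-y_0)=-\lambda y$, whence $\nm{\D_{0+}^\alpha(y-y_0)}_{L^2(0,T)}=\lambda\nm{y}_{L^2(0,T)}$. Since $y-y_0\in{}_0H^\alpha(0,T)$ (easily checked from the leading behaviour $y-y_0\sim -y_0\lambda t^\alpha/\Gamma(1+\alpha)$ at $t=0$), \cref{lem:key-equi} then yields $\nm{y-y_0}_{{}_0H^\alpha(0,T)}\leq C_\alpha\lambda\nm{y}_{L^2(0,T)}$, and adding the trivial bound $\nm{y_0}_{H^\alpha(0,T)}\leq C_{\alpha,T}\snm{y_0}$ for the constant function finishes this piece. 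The $H^{\alpha/2}$ component of \eqref{eq:regu1-y-f-0-<1/2} then follows by standard interpolation between the $L^2$ and $H^\alpha$ bounds, together with \eqref{eq:0H>H}.

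The bound \eqref{eq:y0-Hs} is obtained by interpolation among the estimates just produced. The target exponent $(1+\alpha\gamma)/2$ ranges over $(1/2,1/2+\alpha)$: near $s=1/2$ the seminorm of $y$ diverges because $y$ resembles a sharp transition from $y_0$ down to $0$, while near $s=1/2+\alpha$ divergence arises from the leading $-y_0\lambda t^\alpha/\Gamma(1+\alpha)$ term of $y-y_0$. Tracking constants through $K$-method interpolation between \eqref{eq:y0-L2} and the $H^\alpha$ bound produced above naturally delivers the prefactor $\sqrt{2\gamma-\gamma^2}=\sqrt{\gamma(2-\gamma)}$ that vanishes at both endpoints $\gamma=0,2$.

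The main obstacle is \eqref{eq:y0-L2} itself, whose constant must be independent of $\lambda$ even though every previous bound actually improves with $\lambda$. The key extra ingredient I would invoke is the complete monotonicity of $E_{\alpha,1}(-z)$ on $[0,\infty)$ (a classical Pollard-type fact), implying that $y(t)$ decreases monotonically from $y_0$ on $[0,T]$; consequently $\nm{y}_{L^\infty(0,T)}+\nm{y'}_{L^1(0,T)}\leq C_\alpha\snm{y_0}$ uniformly in $\lambda$. Extending $y$ by zero outside $(0,T)$ and applying the Fourier characterization of $\snm{\cdot}_{H^{1/2-\epsilon}(0,T)}$ recalled in the preliminaries yields $\snm{\mathcal{F}(y\chi_{(0,T)})(\xi)}\leq C_{\alpha,T}\snm{y_0}\min(1,\snm{\xi}^{-1})$, and the elementary estimate $\int_{\mathbb{R}}\snm{\xi}^{1-2\epsilon}\min(1,\snm{\xi}^{-2})\,\mathrm{d}\xi\leq C/\epsilon$ for $\epsilon\in(0,1/2]$ then comfortably delivers the claimed $\epsilon^{-3/2}$ bound.
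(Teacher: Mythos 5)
Your treatment of the $L^2$ and $H^\alpha$ components of \eqref{eq:regu1-y-f-0-<1/2} is sound and matches the paper's computation in substance (the paper likewise reduces the $H^\alpha$ piece to $\lambda\nm{y}_{L^2}$ via the equation and \cref{lem:key-equi}), and your argument for \eqref{eq:y0-L2} via complete monotonicity of $E_{\alpha,1}(-\cdot)$ plus the Fourier characterization is correct and in fact stronger than required: it yields $\sqrt{\epsilon}\,\snm{y}_{H^{1/2-\epsilon}(0,T)}\lesssim\snm{y_0}$, whereas the paper pays $\epsilon^{3/2}$ because it goes through \cref{lem:coer} with the factor $\csc^2(\epsilon\pi)$. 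The genuine gap is \eqref{eq:y0-Hs}. The target exponent $(1+\alpha\gamma)/2$ sweeps the interval $(1/2,\,1/2+\alpha)$, which lies strictly above \emph{both} endpoints you propose to interpolate between, namely $1/2-\epsilon$ from \eqref{eq:y0-L2} and $\alpha$ from the $H^\alpha$ bound: for $\alpha\leqslant 1/2$ the entire range is unreachable, and for $\alpha>1/2$ the portion $\gamma\in(2-1/\alpha,2)$ is unreachable. $K$-method interpolation only produces intermediate exponents, so no amount of constant-tracking can deliver \eqref{eq:y0-Hs} from those two inputs. The estimate requires genuinely new information — that $y-y_0$ vanishes like $t^\alpha$ at the origin, so that $\D_{0+}^{s}(y-y_0)=-y_0\lambda t^{\alpha-s}E_{\alpha,\alpha+1-s}(-\lambda t^{\alpha})$ stays square-integrable for all $s<\alpha+1/2$. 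The paper exploits this by bounding $\nm{\D_{0+}^{(1+\alpha\gamma)/2}(y-y_0)}_{L^2(0,T)}$ directly via \eqref{eq:Eabz-est}, reducing to
\begin{equation*}
\int_0^T\frac{\lambda^{2}t^{\alpha(2-\gamma)-1}}{(1+\lambda t^{\alpha})^{2}}\dd t
\leqslant \frac{C_\alpha\,\lambda^{\gamma}}{\gamma(2-\gamma)},
\end{equation*}
obtained by splitting at $t=\lambda^{-1/\alpha}$; this is precisely where the prefactor $\sqrt{2\gamma-\gamma^2}$ comes from, and \cref{lem:key-equi} plus \eqref{eq:0H>H} then conclude.

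A secondary, smaller defect concerns the middle term of \eqref{eq:regu1-y-f-0-<1/2} when $\alpha=1/2$. The statement asserts $\lambda^{(\beta-1)/2}\nm{y}_{H^{\alpha/2}(0,T)}\leqslant C\snm{y_0}$ with \emph{no} $\eta(\epsilon)$ attached, but interpolating your two $\sqrt{\epsilon}\,\lambda^{-\epsilon}$-weighted endpoint bounds and optimizing over $\epsilon$ only gives $\lambda^{1/2}\nm{y}_{H^{1/4}(0,T)}\lesssim\sqrt{\ln\lambda}\,\snm{y_0}$, i.e.\ you lose a logarithm exactly in the case the lemma is designed to handle cleanly. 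The paper avoids this by estimating $\D_{0+}^{\alpha/2}y$ pointwise and observing that the resulting integral $\int_0^T\lambda^{-1}t^{-\alpha}(1+\lambda t^{\alpha})^{-2}\dd t\leqslant C_\alpha\lambda^{-1/\alpha}$ converges without any logarithmic loss for every $\alpha\in(0,1)$, unlike the $L^2$ integral. Both issues are repaired by the same device: estimate each fractional derivative of $y$ (or $y-y_0$) directly from the Mittag--Leffler representation rather than interpolating between two lossy endpoint bounds.
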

	\begin{proof}
		We first prove \cref{eq:regu1-y-f-0-<1/2}. Since the case $0<\alpha<1/2$ has 
		been given by \cref{eq:regu-y0}, we only consider the case $1/2\leqslant \alpha<1$, which says that 
		\begin{equation}\label{eq:1/2-1}
		\eta(\epsilon)\lambda^{-1-\epsilon} \nm{y}_{H^{\alpha}(0,T)} +
		\lambda^{-1/2}\nm{y}_{H^{\alpha/2}(0,T)} +
		\eta(\epsilon)\lambda^{-\epsilon} \nm{y}_{L^2(0,T)} 
		\leqslant {}C_{\alpha,\lambda_*,T}	\lambda^{-\frac{1}{2\alpha}}\snm{y_0}.
		\end{equation}
		By \cref{eq:Eabz-est} and direct calculations, we get
		\begin{equation}\label{eq:mid}
		\begin{split}
		{}&
		\snm{\eta(\epsilon)}^2
		\lambda^{-2-2\epsilon}
		\snm{\left(\D_{0+}^{\alpha}(y-y_0)\right)(t)}^2
		+
		\lambda^{-1}
		\snm{\left(\D_{0+}^{\alpha/2}y\right)(t)}^2
		+
		\snm{\eta(\epsilon)}^2
		\lambda^{-2\epsilon} \snm{y(t) }^2\\
		\leqslant{} &C_\alpha \snm{y_0}^2
		\frac{\snm{\eta(\epsilon)}^2
			\lambda^{-2\epsilon}+\lambda^{-1}
			t^{-\alpha}}{(1+\lambda t^{\alpha})^2} ,
		\end{split}
		\end{equation}
		for all $0<t\leqslant T$.
		It is evident that
		\begin{equation}\label{eq:est-intg}
		\int_{0}^{T}
		\frac{	\lambda^{-1}t^{-\alpha}}{(1+\lambda t^{\alpha})^2}\dd t
		\leqslant \int_{0}^{\infty}
		\frac{	\lambda^{-1}t^{-\alpha}}{(1+\lambda t^{\alpha})^2}\dd t
		={}\int_{0}^{\lambda^{-\frac{1}{\alpha}}}\lambda^{-1}t^{-\alpha}\dd t
		+ \int_{\lambda^{-\frac{1}{\alpha}}}^\infty
		\lambda^{-3}t^{-3\alpha}\dd t
		={}C_\alpha\lambda^{-1/\alpha}.
		\end{equation}
		If $\alpha=1/2$, then for $0<\epsilon\leqslant 1/2$ it holds
		\[
		\begin{split}
		\int_{0}^{T}\frac{	\lambda^{-2\epsilon}}{(1+\lambda t^{\alpha})^2}\dd t
		={}&
		\lambda^{-\frac{1}{\alpha}}
		\int_{0}^T
		\frac{	(\lambda t^\alpha)^{1/\alpha-2\epsilon}}{(1+\lambda t^{\alpha})^2}
		t^{2\alpha\epsilon-1}\dd t
		\leqslant{}
		\lambda^{-\frac{1}{\alpha}}
		\int_{0}^T	t^{2\alpha\epsilon-1}\dd t = 
		\frac{C_{\alpha,T}}{\epsilon}
		\lambda^{-1/\alpha};
		\end{split}
		\]
		and if $1/2<\alpha<1$, then using a similar manner 
		for estimating \cref{eq:est-intg} gives
		\[
		\int_{0}^{T}	\frac{1}{(1+\lambda t^{\alpha})^2}\dd t
		\leqslant 
		C_\alpha\lambda^{-1/\alpha}.
		\]
		Hence, by \cref{lem:key-equi}, plugging the above estimates into \cref{eq:mid} implies
		\[
		\eta(\epsilon)\lambda^{-1-\epsilon} \nm{y-y_0}_{{}_0H^{\alpha}(0,T)} +
		\lambda^{-1/2}\nm{y}_{{}_0H^{\alpha/2}(0,T)} +
		\eta(\epsilon)\lambda^{-\epsilon} \nm{y}_{L^2(0,T)} 
		\leqslant {}C_{\alpha,T}	\lambda^{-\frac{1}{2\alpha}}\snm{y_0},
		\]
		which, together with the fact \cref{eq:0H>H} and the assumption 
		$\lambda\geqslant \lambda_*>0$, yields \cref{eq:1/2-1} immediately.

If $0<\epsilon\leqslant 1/2$, then using \cref{lem:coer,eq:Eabz-est} gives
\[
\begin{split}
\snm{y}^2_{H^{1/2-\epsilon}(0,T)} \leqslant \csc^2\epsilon\pi\nm{\D_{0+}^{1/2-\epsilon}y}_{L^2(0,T)}^2
\leqslant {}&
\frac{C_{\alpha}	 \snm{y_0}^2}{\epsilon^2}
\int_{0}^{T}
\frac{t^{2\epsilon-1}}{(1+\lambda t^{\alpha})^2}
\dd t\\
\leqslant {}&
\frac{C_{\alpha}	 \snm{y_0}^2}{\epsilon^2}
\int_{0}^{T}
t^{2\epsilon-1}
\dd t = \frac{C_{\alpha,T}	 \snm{y_0}^2}{\epsilon^3}.
\end{split}
\]
which proves \eqref{eq:y0-L2}.

To the end, we consider \eqref{eq:y0-Hs}. 
		Since $0<\gamma<2$, we have 
		\[
		\frac{1}{2}<\frac{1+\alpha\gamma}{2}<\alpha+1/2.
		\]
		By \cref{eq:Eabz-est}, \cref{lem:key-equi} and straightforward calculations, we get
		\[
		\begin{split}
		\nm{y-y_0}_{{}_0H^{(1+\alpha\gamma)/2}(0,T)}^2
		\leqslant {}&C_{\alpha}		
		\int_{0}^{T}		\snm{\left(\D_{0+}^{(1+\alpha\gamma)/2}(y-y_0)\right)(t)}^2\dd t
		\leqslant {}C_{\alpha}		 \snm{y_0}^2\int_{0}^{T}
		\frac{\lambda^{2}t^{ \alpha(2-\gamma)-1}}{(1+\lambda t^{\alpha})^2}
		\dd t,
		\end{split}
		\]
		and we estimate the integral in a similar way for \cref{eq:est-intg} to obtain
		\[
		\begin{split}
		\nm{y-y_0}_{{}_0H^{(1+\alpha\gamma)/2}(0,T)}
		\leqslant {}\frac{C_{\alpha}\lambda^{\gamma/2}\snm{y_0}}
		{\sqrt{2\gamma-\gamma^2}}.
		\end{split}
		\]
		Therefore, from   \cref{eq:0H>H} and 
		the assumption $\lambda\geqslant \lambda_*>0$ it follows  \eqref{eq:y0-Hs}.
	\end{proof}
	\begin{rem}
		\label{rem:regu-ML}
		For any fixed $\lambda>0$, from \cref{lem:regu-y0} we see 
		the highest regularity of $y = y_0 E_{\alpha,1}(-\lambda t^\alpha)$ is no more than $H^{1/2+\alpha}(0,T)$. In fact, though we can establish higher regularity (cf.\cite{li_new_2019}) for the smooth part $	y_s=y-y_0\psi_\lambda$, where
		\[
		\psi_\lambda(t):=1-\frac{\lambda t^\alpha}{\Gamma(\alpha+1)},
		\quad 0\leqslant t\leqslant T,
		\]
		the final regularity for $y = y_{s}+ y_0\psi_\lambda$ is dominated by $\psi_\lambda$, which belongs to $H^{1/2+\alpha-\epsilon}(0,T)$ for any $\epsilon>0$ due to the singular term $t^\alpha$.
	\end{rem}
	Combining \cref{lem:decomp-u,lem:regu-y0} gives the following conclusion.
	\begin{thm}\label{thm:regu-u0}
		Assume $f = 0$ and $u_0\in\dot H^{\gamma}(\Omega)$ with $ \gamma\geqslant -1$ if $0<\alpha<1/2$ and $\gamma>1-1/\alpha$, 
		then the weak solution defined by \cref{eq:weak_form} satisfies
		\[
		\begin{split}
		{}&\eta(\epsilon) \nm{u}_{H^{\alpha}(0,T;\dot H^{\beta+\gamma-2-\epsilon}(\Omega))} 
		+\nm{u}_{H^{\alpha/2}(0,T;\dot H^{\beta+\gamma-1}(\Omega))} +
		\eta(\epsilon)\nm{u}_{L^2(0,T;\dot H^{\beta+\gamma-\epsilon}(\Omega))} \\
		\leqslant {}&C_{\alpha,T,\Omega}
		\nm{u_0}_{\dot H^{\gamma}(\Omega)},
		\end{split}
		\]
		where $\beta=\min\{2,1/\alpha\}$, and $\epsilon,\,\eta(\epsilon)$ are defined by \cref{eq:eta-eps}. Moreover, 
				\[
		\left\{
		\begin{aligned}
		&	\epsilon^{3/2}
		\snm{u}_{H^{1/2-\epsilon}(0,T;L^2(\Omega))}
		\leqslant {}
		C_{\alpha,T} \nm{u_0}_{L^{2}(\Omega)},
		&&0<\epsilon\leqslant  1/2,\\
		&		\sqrt{2\gamma-\gamma^2}
		\nm{u}_{H^{(1+\alpha\gamma)/2}(0,T;L^2(\Omega))}
		\leqslant {}C_{\alpha,T,\Omega}\nm{u_0}_{\dot H^{\gamma}(\Omega)},&& 0< \gamma<2,\\
		&		\sqrt{2\gamma-\gamma^2}
		\nm{u}_{H^{(1+\alpha\gamma)/2}(0,T;\dot H^{1}(\Omega))}
		\leqslant {}C_{\alpha,T,\Omega}\nm{u_0}_{\dot H^{\gamma+1}(\Omega)},&& 0< \gamma<2.
		\end{aligned}
		\right.
		\]
	\end{thm}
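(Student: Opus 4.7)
The plan is to reduce the PDE regularity to a countable family of scalar ODE estimates via the spectral decomposition in \cref{lem:decomp-u}, and then apply \cref{lem:regu-y0} coordinate by coordinate. With $f = 0$, \cref{lem:decomp-u} yields $u = \sum_{n=0}^{\infty} y_n \phi_n$ with each $y_n(t) = y_{n,0} E_{\alpha,1}(-\lambda_n t^\alpha)$ solving \cref{eq:ode-y-f-0} at $(\lambda, y_0) = (\lambda_n, y_{n,0})$, and $\sum_n \lambda_n^{\gamma} y_{n,0}^2 = \nm{u_0}_{\dot H^{\gamma}(\Omega)}^2$. Since $\lambda_n \geqslant \lambda_0 > 0$ for every $n$, the hypothesis $\lambda \geqslant \lambda_* > 0$ of \cref{lem:regu-y0} is satisfied uniformly with $\lambda_* = \lambda_0$.

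I would then square each of the three inequalities in \cref{eq:regu1-y-f-0-<1/2} for $y_n$, multiply by $\lambda_n^{\beta+\gamma-2-\epsilon}$, $\lambda_n^{\beta+\gamma-1}$, and $\lambda_n^{\beta+\gamma-\epsilon}$ respectively, and sum over $n$. Using the vector-valued identity $\nm{u}_{H^{\delta}(0,T;\dot H^{s}(\Omega))}^2 = \sum_n \lambda_n^{s} \nm{y_n}_{H^{\delta}(0,T)}^2$ built into \cref{eq:def-HX}, the left-hand side reassembles into the three required Bochner--Sobolev norms, while the right-hand side collapses because the $\lambda_n$-powers were chosen precisely so that $\lambda_n^{\gamma}$ is what survives. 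For $\alpha \neq 1/2$ the admissible value $\epsilon = 0$ with $\eta(0) = 1$ closes the argument immediately. For $\alpha = 1/2$ the same procedure first produces $\sqrt{\epsilon}\,\nm{u}_{H^{1/2}(0,T;\dot H^{\gamma-2\epsilon}(\Omega))} \leqslant C \nm{u_0}_{\dot H^{\gamma}(\Omega)}$ together with its $L^2$-in-time analogue, and the form stated in the theorem (with spatial index $\gamma-\epsilon$ rather than $\gamma-2\epsilon$) is recovered by the substitution $\epsilon \mapsto \epsilon/2$, which leaves the rescaled parameter inside the admissible range prescribed in \cref{eq:eta-eps}.

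The ``moreover'' part requires the same squaring-and-summing argument but applied to the scalar bounds \cref{eq:y0-L2,eq:y0-Hs}. Squaring \cref{eq:y0-L2} and summing without any $\lambda_n$-weighting immediately produces the $\epsilon^{3/2}$-estimate, since $\sum_n y_{n,0}^2 = \nm{u_0}_{L^2(\Omega)}^2$. Squaring \cref{eq:y0-Hs} and multiplying by $\lambda_n^{0}$ and $\lambda_n^{1}$ respectively before summation converts the $\lambda_n^{\gamma}$ on the right into $\nm{u_0}_{\dot H^{\gamma}(\Omega)}^2$ and $\nm{u_0}_{\dot H^{\gamma+1}(\Omega)}^2$, delivering the two $\sqrt{2\gamma-\gamma^2}$-weighted estimates.

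The analytic content is inherited entirely from \cref{lem:regu-y0}; the principal obstacle is arithmetic bookkeeping, that is, pairing each scalar exponent $\beta/2-1-\epsilon$, $(\beta-1)/2$, $\beta/2-\epsilon$ from the lemma with the correct Bochner-side exponent $\beta+\gamma-2-\epsilon$, $\beta+\gamma-1$, $\beta+\gamma-\epsilon$ so that the weighted sums telescope exactly to $\nm{u_0}_{\dot H^{\gamma}(\Omega)}^2$, and then reconciling the borderline case $\alpha = 1/2$ through the $\epsilon \mapsto \epsilon/2$ rescaling described above.
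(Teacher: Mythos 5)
Your proposal is correct and coincides with the paper's own (only sketched) argument: the paper proves \cref{thm:regu-u0} precisely by combining the eigenfunction decomposition of \cref{lem:decomp-u} with the mode-wise bounds of \cref{lem:regu-y0}, squaring, weighting by the appropriate powers of $\lambda_n$, and summing, with $\lambda_*=\lambda_0$ supplying the $\Omega$-dependence of the constants. Your handling of the $\alpha=1/2$ bookkeeping (the $2\epsilon$ versus $\epsilon$ spatial index, fixed by rescaling $\epsilon\mapsto\epsilon/2$) is a sensible and valid resolution of a detail the paper leaves implicit.
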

	\subsection{Regularity in Besov space}
	We now consider the regularity of the solution to \cref{eq:weak_form} in proper Besov spaces. As before, we start from the auxiliary problem \cref{eq:ode-y-f-0} and split it into two cases: $y_0\neq0,\,g=0$; and $y_0=0,\,g=1$. 
	
	To this end, 
	let us present a useful expression of the Mittag-Leffler function \cref{eq:E-ab}; see \cite[Theorem 2.1]{goreno_computation_2002}.
	\begin{lem}[\cite{goreno_computation_2002}]
		\label{eq:ex-Et}
		If $\beta<1$, then for all $0\leqslant t<\infty$, it holds that
		\[
		E_{\alpha,\beta}(-t) = 
		\frac{1}{ \pi  \alpha} \int^{\infty}_{0} 
		\frac{r\sin\beta\pi-t\sin(\alpha-\beta)\pi}{r^2+t^2+2tr\cos\alpha\pi}
		r^{(1-\beta)/\alpha}e^{-r^{1/\alpha}}
		\dd r.
		\]
	\end{lem}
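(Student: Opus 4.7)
The plan is to derive the integral representation via the classical Hankel-contour technique, starting from the series definition of $E_{\alpha,\beta}$. First, I would invoke Hankel's representation
\[
\frac{1}{\Gamma(z)} = \frac{1}{2\pi\ii}\int_{\mathcal{H}} e^{s} s^{-z}\,\dd s
\]
and substitute it into each term of $E_{\alpha,\beta}(-t)=\sum_{k=0}^{\infty}(-t)^k/\Gamma(\alpha k+\beta)$, where $\mathcal{H}$ is a Hankel contour starting at $-\infty$ below the real axis, encircling the origin counterclockwise, and returning to $-\infty$ above, chosen with radius larger than $t^{1/\alpha}$. Interchanging the sum and the integral (justified by absolute convergence on such a contour) and collapsing the geometric series $\sum_{k}(-t s^{-\alpha})^k$ gives
\[
E_{\alpha,\beta}(-t) = \frac{1}{2\pi\ii}\int_{\mathcal{H}}\frac{e^{s}\,s^{\alpha-\beta}}{s^{\alpha}+t}\,\dd s.
\]
Since $0<\alpha<1$, the zeros of $s^{\alpha}+t$ live outside the principal Riemann sheet, so the resulting single-contour integrand is analytic on the cut plane and the contour may subsequently be deformed without picking up residues.

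Next, I would shrink $\mathcal{H}$ so that it hugs the branch cut along $(-\infty,0]$. Parametrize the upper and lower edges as $s=\rho e^{\pm\ii\pi}$ for $\rho>0$; the small circular arc near the origin contributes $O(\varepsilon^{1-\beta})$ and vanishes as $\varepsilon\to 0^{+}$ precisely because $\beta<1$. Since the integrand values on the two rays are complex conjugates and the $\dd s$ factors contribute $e^{\pm\ii\pi}=\mp 1$, a short computation yields
\[
E_{\alpha,\beta}(-t) = -\frac{1}{\pi}\int_{0}^{\infty}\mathrm{Im}\!\left[\frac{e^{-\rho}\rho^{\alpha-\beta}e^{\ii\pi(\alpha-\beta)}}{\rho^{\alpha}e^{\ii\alpha\pi}+t}\right]\dd\rho.
\]

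To put this in the advertised form, I would rationalize the fraction by multiplying numerator and denominator by $\rho^{\alpha}e^{-\ii\alpha\pi}+t$. The denominator becomes the real quantity $\rho^{2\alpha}+2t\rho^{\alpha}\cos(\alpha\pi)+t^{2}$, while the numerator simplifies through the identity $e^{\ii\pi(\alpha-\beta)}(\rho^{\alpha}e^{-\ii\alpha\pi}+t)=\rho^{\alpha}e^{-\ii\beta\pi}+t\,e^{\ii\pi(\alpha-\beta)}$, whose imaginary part is $-\rho^{\alpha}\sin(\beta\pi)+t\sin((\alpha-\beta)\pi)$; combining with the outer minus sign flips this to $\rho^{\alpha}\sin(\beta\pi)-t\sin((\alpha-\beta)\pi)$. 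The substitution $r=\rho^{\alpha}$, under which $\dd\rho=\frac{1}{\alpha}r^{1/\alpha-1}\dd r$, $\rho^{\alpha-\beta}\dd\rho=\frac{1}{\alpha}r^{(1-\beta)/\alpha}\dd r$, and $e^{-\rho}=e^{-r^{1/\alpha}}$, then converts the remaining integral into the stated closed form.

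The main obstacle is organising the two successive contour manipulations coherently: one first needs a contour of radius exceeding $t^{1/\alpha}$ to legitimise the geometric-series summation under the integral sign, and only afterwards is one free to shrink the contour onto the branch cut, which requires checking that the poles of $(s^{\alpha}+t)^{-1}$ remain on non-principal sheets throughout the deformation, a property that hinges on the assumption $0<\alpha<1$. The trigonometric reduction in the last step is elementary but must be executed carefully, since an unsimplified imaginary part would obscure the compact numerator $r\sin(\beta\pi)-t\sin((\alpha-\beta)\pi)$.
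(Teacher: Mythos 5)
Your proposal is correct: the paper gives no proof of this lemma (it is quoted verbatim from Theorem 2.1 of the cited Gorenflo--Loutchko--Luchko paper), and your Hankel-contour derivation --- term-by-term substitution of Hankel's formula for $1/\Gamma(\alpha k+\beta)$, geometric summation on a contour of radius exceeding $t^{1/\alpha}$, collapse onto the branch cut using $0<\alpha<1$ to keep the zeros of $s^{\alpha}+t$ off the principal sheet, and the final substitution $r=\rho^{\alpha}$ --- is precisely the standard argument used in that reference. All the computations check out (the small-arc contribution is in fact $O(\varepsilon^{1+\alpha-\beta})$ for $t>0$, even better than your stated $O(\varepsilon^{1-\beta})$, and both vanish under $\beta<1$).
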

	
	This lemma states that if $\beta<1$, then
	\[
	E_{\alpha ,\beta}(0) = 
	\frac{\sin\beta\pi}{ \pi  } \Gamma(1-\beta).
	\]
	Hence, if $\beta\in\mathbb Z\cap(-\infty,1)$, then $E_{\alpha,\beta}(0)=0$, and thus $E_{\alpha,\beta}(-t)$ is bounded near $t = 0+$. 
	Below, we give a refined estimate that implies the asymptotic behavior of $E_{\alpha,\beta}(-t)$ as $t$ goes to infinity. Note that this estimate has no contraction with the boundness around $t = 0+$ and what we are interested in is the case $t\to\infty$.
	\begin{lem}\label{lem:bd-dk-y}
		If $\beta<1$, then for all $0<t<\infty$, 
		\begin{equation}
		\label{eq:bd-dk-y}
		\snm{E_{\alpha,\beta}(-t)}
		\leqslant C_\alpha 
		\Gamma(1+\theta\alpha-\beta)t^{-\theta},
		\end{equation}
		where $0\leqslant \theta\leqslant 1$.
		Moreover, if $\alpha-\beta\in \mathbb Z$, then \cref{eq:bd-dk-y} 
		holds with $0\leqslant \theta\leqslant 2$.
	\end{lem}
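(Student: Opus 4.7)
The plan is to use the integral representation of $E_{\alpha,\beta}(-t)$ supplied by the preceding lemma and to extract the factor $t^{-\theta}$ through a weighted AM--GM inequality, while capturing the correct Gamma coefficient from the exponential factor. The interval of admissible $\theta$ will be dictated by two separate parameter choices, and the restriction $\theta\leqslant 1$ will be seen to come entirely from the term containing $\sin(\alpha-\beta)\pi$, which vanishes exactly when $\alpha-\beta\in\mathbb Z$.

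First I would take absolute values in the integral representation and bound the denominator using the identity
\[
r^2+t^2+2tr\cos\alpha\pi
=(r+t\cos\alpha\pi)^2+t^2\sin^2\alpha\pi,
\]
which yields $r^2+t^2+2tr\cos\alpha\pi\geqslant \tfrac12\sin^2(\alpha\pi)(r^2+t^2)$ for $0<\alpha<1$. After the substitution $u=r^{1/\alpha}$ (so $dr=\alpha u^{\alpha-1}du$), the estimate becomes
\[
\snm{E_{\alpha,\beta}(-t)}\leqslant C_\alpha\bigl(\snm{\sin\beta\pi}\,A+t\snm{\sin(\alpha-\beta)\pi}\,B\bigr),
\]
where
\[
A:=\int_0^\infty\frac{u^{2\alpha-\beta}e^{-u}}{u^{2\alpha}+t^2}\,du,\qquad
B:=\int_0^\infty\frac{u^{\alpha-\beta}e^{-u}}{u^{2\alpha}+t^2}\,du.
\]

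Next, for any $\sigma\in[0,1]$ the weighted AM--GM inequality gives $u^{2\alpha}+t^2\geqslant u^{2\alpha\sigma}t^{2(1-\sigma)}$, so
\[
A\leqslant t^{-2(1-\sigma_A)}\int_0^\infty u^{2\alpha(1-\sigma_A)-\beta}e^{-u}\,du,
\qquad
B\leqslant t^{-2(1-\sigma_B)}\int_0^\infty u^{\alpha-2\alpha\sigma_B-\beta}e^{-u}\,du.
\]
I would choose $\sigma_A=1-\theta/2$ (valid for $\theta\in[0,2]$) so that $A\leqslant \Gamma(1+\alpha\theta-\beta)\,t^{-\theta}$, and $\sigma_B=(1-\theta)/2$ (valid for $\theta\in[-1,1]$) so that $tB\leqslant \Gamma(1+\alpha\theta-\beta)\,t^{-\theta}$. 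The integrals converge because $\beta<1$ guarantees positive exponents, and the Gamma evaluations produce exactly the factor appearing in \cref{eq:bd-dk-y}. Combining both estimates proves the bound for $\theta\in[0,1]$, uniformly on $0<t<\infty$.

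For the improved range $0\leqslant\theta\leqslant 2$ under the hypothesis $\alpha-\beta\in\mathbb Z$, observe that $\sin(\alpha-\beta)\pi=0$, so the $B$--integral drops out and only $A$ contributes. Since the choice $\sigma_A=1-\theta/2$ remains admissible for all $\theta\in[0,2]$, the same computation yields the extended bound. The main obstacle is really bookkeeping: one must pick the two interpolation parameters so that the resulting $\Gamma$-arguments align with the statement and so that the dependence on $\beta$ is absorbed entirely into the single explicit $\Gamma(1+\theta\alpha-\beta)$ factor (with all remaining constants depending only on $\alpha$ through $\sin(\alpha\pi)$); the sines $\sin\beta\pi$ and $\sin(\alpha-\beta)\pi$ are majorised by $1$.
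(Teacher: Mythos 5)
Your proposal is correct and follows essentially the same route as the paper: both start from the integral representation of $E_{\alpha,\beta}(-t)$, bound the rational kernel from below, and use a weighted AM--GM interpolation between powers of $r$ and $t$ to extract $t^{-\theta}$, with the restriction $\theta\leqslant 1$ traced to the $t\sin(\alpha-\beta)\pi$ term and lifted to $\theta\leqslant 2$ when that term vanishes. The only cosmetic difference is that the paper bounds the whole kernel by $C_\alpha/(r+t)$ via $r^2+2tr\cos\alpha\pi+t^2\geqslant\tfrac{1+\cos\alpha\pi}{2}(r+t)^2$ and applies AM--GM once, whereas you split into two integrals and choose separate interpolation weights; both yield the identical $\Gamma(1+\theta\alpha-\beta)t^{-\theta}$ bound.
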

	\begin{proof}
		By \cref{eq:ex-Et}, we have
		\begin{align}
			E_{\alpha,\beta}(-t) = {}&
			\frac{1}{\pi \alpha} 
			\int_{0}^{\infty}
			r^{(1-\beta)/\alpha}
			e^{-r^{1 / \alpha}}  
			\varphi_{\alpha,\beta}(r,t)
			\dd r,\label{eq:Et}
		\end{align}
		where 
		\[
		\varphi_{\alpha,\beta}(r,t): = 
		\frac{r \sin \beta\pi-t\sin (\alpha-\beta)\pi}{r^{2}+2tr  \cos \alpha\pi+t^2}.
		\]
		In light of 
		\begin{equation}\label{eq:est-r-z}
		r^{2}+2tr  \cos \alpha\pi+t^2\geqslant \frac{1+\cos\alpha\pi}{2}(r+t)^2
		\end{equation}
		and the arithmetic-geometric mean inequality (cf. \cite[page 4]{stein_functional_2011})
		\begin{equation}\label{eq:agm}
		r^{1-\theta}t^\theta\leqslant (1-\theta)r+\theta t,\quad 0\leqslant \theta\leqslant 1,
		\end{equation}
		we obtain
		\begin{equation}\label{eq:est-psi}
		\snm{\varphi_{\alpha,\beta}(r,t)}
		\leqslant\frac{C_\alpha}{r+t}
		\leqslant C_\alpha r^{\theta-1}t^{-\theta}.
		\end{equation}
		Therefore, inserting \cref{eq:est-psi} into \cref{eq:Et} gives
		\[
		\begin{split}
		{}&	\snm{	E_{\alpha,\beta}(-t)}
		\leqslant 
		C_{\alpha}t^{-\theta}
		\int_{0}^{\infty}
		e^{-r^{1 / \alpha}}(r^{1/\alpha})^{(\theta-1)\alpha+1-\beta} \dd r\\
		={}&C_\alpha t^{-\theta}
		\int_{0}^{\infty}
		e^{-s}s^{\theta\alpha-\beta} \dd s
		= C_\alpha \Gamma(1+\theta\alpha-\beta)t^{-\theta},
		\end{split}
		\]		
		which establishes \cref{eq:bd-dk-y}. 
		Moreover, if $\alpha-\beta\in\mathbb Z$, then
		\[
		\varphi_{\alpha,\beta}(r,t)=
		\frac{r \sin \beta\pi}{r^{2}+2tr  \cos \alpha\pi+t^2},
		\]
		and 
		from \cref{eq:est-r-z,eq:agm} it follows
		\[
		\begin{split}
		\snm{\varphi_{\alpha,\beta}(r,t)}{}
		\leqslant 	C_\alpha r^{\theta-1}t^{-\theta},
		\quad 0\leqslant \theta\leqslant 2,
		\end{split}
		\]	
		which maintains the estimate \cref{eq:bd-dk-y} and enlarges the range of $\theta$. This completes the proof.
	\end{proof}
	Based on \cref{lem:bd-dk-y}, we are able to establish the following lemma.
	\begin{lem}\label{lem:regu-y-g-0-Besov}
		Assume $-1\leqslant \theta\leqslant 1$ with  $1+2\alpha\theta>0$, 
		then the function $y(t)$ defined by \cref{eq:ode-y0-y} belongs to $\bb_{-\alpha,0}^{1+2\alpha\theta-\epsilon}(0,T)$ with any $0<\epsilon\leqslant 1+2\alpha\theta$ and
		\begin{equation}\label{eq:y-g-0-Besov}
		\nm{y}_{\bb_{-\alpha,0}^{1+2\alpha\theta-\epsilon}(0,T)}
		\leqslant \frac{C_{\alpha,T}}{\sqrt{\epsilon}}\lambda^{\theta}.
		\end{equation}
	\end{lem}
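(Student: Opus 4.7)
The approach is to bound the Jacobi expansion coefficients of $y$ directly by combining the explicit Mittag--Leffler representation with the sharp pointwise bound in \cref{lem:bd-dk-y}. Writing $y = \sum_{k=0}^\infty y_k S_k^{-\alpha,0}$, the orthogonality of the shifted Jacobi polynomials with respect to $\mu^{-\alpha,0}(t) = (T-t)^{-\alpha}$ gives
\[
\xi_k^{-\alpha,0}\,y_k = \int_0^T y(t)\,S_k^{-\alpha,0}(t)\,(T-t)^{-\alpha}\,\dd t,
\]
and the task reduces to producing coefficient decay of the form $\xi_k^{-\alpha,0}\,|y_k|^2\lesssim \lambda^{2\theta}\,k^{-3-4\alpha\theta}$, so that the defining Besov series converges precisely when $\epsilon>0$.

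The first step would use the Rodrigues-type formula for $S_k^{-\alpha,0}$ recorded in \cref{sec:sJacobi-append} to rewrite the integrand. That representation expresses $S_k^{-\alpha,0}(t)(T-t)^{-\alpha}$ as a classical $k$-th derivative of a function vanishing sufficiently often at $t=0$ and $t=T$, so that integrating by parts $k$ times transfers these derivatives onto $y$ without boundary contributions and converts the coefficient into
\[
\xi_k^{-\alpha,0}\,y_k = (-1)^k\int_0^T y^{(k)}(t)\,\Phi_k(t)\,\dd t,
\]
where $\Phi_k$ is an explicit polynomial times a smooth weight on $(0,T)$.

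The second step computes $y^{(k)}$ via termwise differentiation of the Mittag--Leffler series \cref{eq:ode-y0-y}, yielding a closed form of the type $y^{(k)}(t) = -y_0\,\lambda\,t^{\alpha-k}\,E_{\alpha,\alpha-k+1}(-\lambda t^{\alpha})$. Applying \cref{lem:bd-dk-y} (after the change of variable $s=\lambda t^{\alpha}$) delivers a pointwise bound of order $\lambda^{\theta}\,t^{-k+\alpha(1-\theta)}$ with a Gamma prefactor. Integrating against $\Phi_k$ and invoking the standard asymptotics $\xi_k^{-\alpha,0}\sim k^{-1}$ then produces the target decay $\xi_k^{-\alpha,0}\,|y_k|^2\lesssim \lambda^{2\theta}\,k^{-3-4\alpha\theta}$.

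The third step plugs this coefficient bound into the defining series of the Besov norm:
\[
\nm{y}_{\bb_{-\alpha,0}^{1+2\alpha\theta-\epsilon}(0,T)}^2
= \sum_{k=0}^\infty \bigl(1+k^{2(1+2\alpha\theta-\epsilon)}\bigr)\,\xi_k^{-\alpha,0}\,y_k^2
\lesssim \lambda^{2\theta}\sum_{k\geqslant 1}k^{-1-2\epsilon}
\lesssim \frac{\lambda^{2\theta}}{\epsilon},
\]
which after taking square roots yields the stated bound $C_{\alpha,T}\,\lambda^{\theta}/\sqrt{\epsilon}$. The main obstacle I expect is securing the sharp exponent in the coefficient decay: this requires a delicate tracking of the combinatorial constants arising from the Rodrigues formula and the Gamma factors produced by \cref{lem:bd-dk-y}, together with a valid choice of $\theta$ within $[-1,1]$ (or within the widened range $[0,2]$ when $\alpha-\beta\in\mathbb Z$, which is relevant near the extremal exponents). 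The hypothesis $1+2\alpha\theta>0$ is precisely what is needed for the tail sum $\sum k^{-1-2\epsilon}$ to be finite once $\epsilon>0$, and its harmonic-type blowup $\sim 1/(2\epsilon)$ as $\epsilon\to 0$ accounts for the $1/\sqrt{\epsilon}$ prefactor in the final bound.
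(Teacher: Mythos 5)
Your proposal follows essentially the same route as the paper's proof: expand $y$ in the shifted Jacobi basis $\{S_k^{-\alpha,0}\}$, use the Rodrigues formula and $k$-fold integration by parts (with vanishing boundary terms) to land the derivatives on $y$, bound $y^{(k)}$ via the Mittag--Leffler derivative identity together with \cref{lem:bd-dk-y}, extract the coefficient decay $k^{-2-2\alpha\theta}$ by Stirling's formula, and sum the tail $\sum k^{-1-2\epsilon}\lesssim 1/\epsilon$. The only slip is notational: after the change of variable $s=\lambda t^\alpha$ the pointwise bound paired with the prefactor $\lambda^{\theta}$ should read $t^{\alpha\theta-k}$ rather than $t^{-k+\alpha(1-\theta)}$ (a reparametrization of the $\theta$ in \cref{lem:bd-dk-y}), which does not affect the argument.
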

	\begin{proof}
		By \cref{eq:Eabz-est}, it is evident that $y\in L_{\mu^{-\alpha,0}}^2(0,T)$ 
		with the decomposition
		\[
		y:=
		\sum_{k=0}^{\infty}
		y_kS_k^{-\alpha,0},\quad 
		\]
		where $\{S_k^{a,b}\}_{k=0}^\infty$ denotes the shifted 
		Jacobi polynomials on $(0,T)$ with respect 
		to the weight $\mu^{a,b}(t) = (T-t)^at^b$ (see \cref{sec:sJacobi-append}), and
		\begin{equation}\label{eq:yk-xik}
		y_k = \frac{1}{\xi_k^{-\alpha,0}}
		\dual{y,S_k^{-\alpha,0}}_{	\mu^{-\alpha,0}},
		\quad
		\xi_k^{-\alpha,0}
		=\frac{T^{1-\alpha} }{2k+1 -\alpha}.
		\end{equation}
		
		By definition \cref{eq:Habg}, it suffices to investigate the asymptotic behavior of the coefficient $y_k$. Let us fix $k\in\mathbb N_+$. By Rodrigues' formula \cref{eq:St}, we have
		\begin{equation}\label{eq:y-Sk}
		\begin{split}
		\dual{y,S_k^{-\alpha,0}}_{	\mu^{-\alpha,0}}
		&= \dfrac{(-1)^k}{T^{k}k!}  
		\dual{y,\dfrac{\mathrm{d}^{k}}{\dd t^{k}}
			\mu^{k-\alpha,k}}_{(0,T)}.
		\end{split}
		\end{equation}
		Using integration by parts gives
		\[
		\begin{split}
		\dual{y,\dfrac{\mathrm{d}^{k}}{\dd t^{k}}
			\mu^{k-\alpha,k}}_{(0,T)}
		={}&	\sum_{i=0}^{k-1}(-1)^i
		\big(\zeta_i(T)-\zeta_i(0)\big)
		+(-1)^{k}
		\dual{y^{(k)},	\mu^{k-\alpha,k}}_{(0,T)},
		\end{split}
		\]
		where
		\[
		\zeta_i(t)={}	y^{(i)}(t)
		\dfrac{\mathrm{d}^{k-i-1}}{\dd t^{k-i-1}}
		\Big[ \mu^{k-\alpha,k}(t)\Big]  =y^{(i)}(t)
		\sum_{j = 0}^{k-i-1}C_{\alpha,i,j,k}\mu^{k-\alpha-j,i+1+j}(t).
		\]
		By \cref{eq:ode-y0-y} it holds the identity
		\begin{equation}\label{eq:dkv}
		y^{(i)}(t) ={}-\lambda y_0t^{\alpha-i}
		E_{\alpha,\alpha+1-i}(-\lambda t^{\alpha}),
		\quad 0\leqslant i\leqslant k-1,
		\end{equation}
		and it follows that $\zeta_i(0)=\zeta_i(T)=0$ for all $0\leqslant i\leqslant k-1$. Thus, we get the relation
		\begin{equation}\label{eq:yk}
		\dual{y,\dfrac{\mathrm{d}^{k}}{\dd t^{k}}
			\mu^{k-\alpha,k}}_{(0,T)}={}	(-1)^{k}
		\dual{y^{(k)},	\mu^{k-\alpha,k}}_{(0,T)}.
		\end{equation}
		Invoking \cref{lem:bd-dk-y} yields the inequality
		\begin{equation}
		\label{eq:est-dk-y-homo}
		\begin{split}
		\snm{y^{(k)}(t)}\leqslant {}&C_{\alpha}\lambda \snm{y_0}t^{\alpha-k}
		\snm{	E_{\alpha,\alpha+1-k}(-\lambda t^{\alpha})}
		\leqslant {}C_{\alpha}	\Gamma(k-\theta\alpha)
		\snm{y_0}\lambda^{\theta}
		t^{\theta\alpha-k},
		\end{split}
		\end{equation}
		where $-1\leqslant \theta\leqslant 1$.
		In addition, we have a
		useful formula \cite[Appendix, (A.6)]{shen_spectral_2011}
		\begin{equation}\label{eq:int-Bab}
		\nm{\mu^{a,b}}_{L^1(0,T)} 
		=T^{a+b+1}
		\frac{\Gamma(a+1)\Gamma(b+1)}{\Gamma(a+b+2)},
		\quad a,b>-1,
		\end{equation}
		which, together with  \cref{eq:est-dk-y-homo}, indicates
		that
		\begin{equation}\label{eq:ydk-w}
		\begin{split}
		\snm{		\dual{y^{(k)},	\mu^{k-\alpha,k}}_{(0,T)}}
		\leqslant {}&
		C_{\alpha} \snm{y_0}\lambda^{\theta}
		\Gamma(k-\theta\alpha)
		\dual{1,\mu^{k-\alpha,\theta\alpha}}_{(0,T)} \\
		={}&C_{\alpha} \snm{y_0} \lambda^{\theta}
		\frac{\Gamma(k+1-\alpha)	\Gamma(k-\theta\alpha)}{	\Gamma(k+2+(\theta-1)\alpha)}
		T^{k+1+(\theta-1)\alpha}.
		\end{split}
		\end{equation}
		For the Gamma function \cref{eq:Gamma}, we have the Stirling's formula \cite[Eq. (6.1.38)]{Abramovitz1972}
		\begin{equation}\label{eq:str-pos}
		\Gamma(z+1)=\sqrt{2 \pi} z^{z+1 / 2} e^{\theta(z)/(12z)-z},
		\quad 0<\theta(z)<1,\,z>0.
		\end{equation}
		Therefore, collecting \cref{eq:y-Sk,eq:yk,eq:str-pos} gives
		\begin{equation}\label{eq:yk-est}
		\begin{split}
		{}&	\snm{	\dual{y,S_k^{-\alpha,0}}_{	\mu^{-\alpha,0}}}=
		\frac{1}{T^{k}k!}\snm{	\dual{y,\dfrac{\mathrm{d}^{k}}{\dd t^{k}}
				\mu^{k-\alpha,k}}_{(0,T)}}\\
		\leqslant {}&
		C_{\alpha,T} \snm{y_0}\lambda^{\theta}
		\frac{\Gamma(k+1-\alpha)	\Gamma(k-\theta\alpha)}{k!	\Gamma(k+2+(\theta-1)\alpha)}
		\leqslant {}
		C_{\alpha,T}\snm{y_0} \lambda^{\theta}
		k^{-2-2\theta\alpha}.
		\end{split}
		\end{equation}
		As a result,
		\[
		\begin{split}
		\nm{y}_{\bb_{-\alpha,0}^{1+2\alpha\theta-\epsilon}(0,T)}^2={}&
		\sum_{k = 0}^{\infty} 
		\frac{1+k^{2+4\alpha\theta-2\epsilon}}{\xi_k^{-\alpha,0}}
		\snm{	\dual{y,S_k^{-\alpha,0}}_{	\mu^{-\alpha,0}}}^2\\
		\leqslant {}&	 	C_{\alpha,T}\snm{y_0}^2 \lambda^{2\theta}
		\left(1+	\sum_{k = 1}^{\infty} k^{-1-2\epsilon} \right)\\
		\leqslant {}&	 	C_{\alpha,T}\snm{y_0}^2 \lambda^{2\theta}
		\int_{1}^{\infty}r^{-1-2\epsilon}\dd r = \frac{C_{\alpha,T}}{\epsilon}
		\snm{y_0}^2 \lambda^{2\theta},
		\end{split}
		\]
		which showes \cref{eq:y-g-0-Besov} and finishes the proof.
	\end{proof}
	\begin{rem}\label{rem:comp-y-Sobolev-Besov}
		For a fixed $\lambda>0$, $y(t)=y_0E_{\alpha,1}(-\lambda t^\alpha)$ 
		has a leading singular term $t^\alpha$, and as mentioned in \cref{rem:regu-ML}, the highest regularity of $y(t)$ is no more than $H^{1/2+\alpha}(0,T)$. However, from \cref{lem:regu-y-g-0-Besov} we observe that   $y\in\bb_{-\alpha,0}^{1+2\alpha-\epsilon}(0,T)$, which can not be improved due to the singular term $t^\alpha$, and the optimal rate $1+2\alpha$ of the standard Legendre spectral method under $L^2$-norm has been validated numerically in \cref{fig:ode_test}. Unfortunately, if $\lambda$ is extremely large or goes to infinity, then we see from \cref{eq:y-g-0-Besov} that the regularity and convergence rate will be ruined (we refer the reader to \cite[Section 1.2]{duan_exponentially_2019} for detailed numerical investigations in this situation). 	
	\end{rem}
	For the particular case $y_0=0$ and $g=1$, the solution to the auxiliary 
	problem \cref{eq:ode-y-f-0} is given by (cf. \cite[Theorem 5.4]{kilbas_theory_2006})
	\begin{equation}\label{eq:y-y0-0-Besov}
	y(t) = t^\alpha E_{\alpha,\alpha+1}(-\lambda t^\alpha),
	\quad 0\leqslant t\leqslant T,
	\end{equation}
	which means, for all $k\in\mathbb N$, 
	\begin{equation*}
		y^{(k)}(t) ={}t^{\alpha-k}
		E_{\alpha,\alpha+1-k}(-\lambda t^{\alpha}).
		\quad 0<t\leqslant T,
	\end{equation*}
	Then invoking \cref{lem:bd-dk-y} yields the estimate
	\begin{equation}\label{eq:dk-y}
	\begin{split}
	\snm{y^{(k)}(t)}\leqslant {}&
	C_{\alpha} \lambda^{\theta-1}
	\Gamma(k-\theta\alpha)
	t^{\theta\alpha-k},
	\quad 0<t\leqslant T,
	\end{split}
	\end{equation}
	where $-1\leqslant \theta\leqslant 1$. Hence, analogously to 
	\cref{lem:regu-y-g-0-Besov}, we can prove the following result.
	\begin{lem}\label{lem:regu-y-Besov}
		Assume $-1\leqslant \theta\leqslant 1$ with $1+2\alpha\theta>0$, 
		then the function $y(t)$ in \cref{eq:y-y0-0-Besov} belongs to $\bb_{-\alpha,0}^{1+2\alpha\theta-\epsilon}(0,T)$ with any $0<\epsilon\leqslant 1+2\alpha\theta$ and
		\[
		\nm{y}_{\bb_{-\alpha,0}^{1+2\alpha\theta-\epsilon}(0,T)}
		\leqslant \frac{C_{\alpha,T}}{\sqrt{\epsilon}}\lambda^{\theta-1}.
		\]
	\end{lem}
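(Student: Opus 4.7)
The strategy mirrors that of Lemma~\ref{lem:regu-y-g-0-Besov} almost verbatim, with three small modifications arising from the fact that here $y(0)=0$ and the explicit form of $y$ differs by a factor $t^\alpha$. First I would expand $y$ in the shifted Jacobi basis, writing
\[
y = \sum_{k=0}^\infty y_k S_k^{-\alpha,0},
\qquad
y_k = \frac{1}{\xi_k^{-\alpha,0}}\dual{y,S_k^{-\alpha,0}}_{\mu^{-\alpha,0}},
\]
which is legitimate because $|E_{\alpha,\alpha+1}(-\lambda t^\alpha)|\leqslant C_\alpha$ by \cref{eq:Eabz-est}, so $y\in L^2_{\mu^{-\alpha,0}}(0,T)$. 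The definition \cref{eq:Habg} of the Besov norm then reduces the problem to estimating the decay of the modal coefficients in $k$.

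Next, exactly as in \cref{eq:y-Sk}, I would apply Rodrigues' formula \cref{eq:St} to get
\[
\dual{y,S_k^{-\alpha,0}}_{\mu^{-\alpha,0}}
=\frac{(-1)^k}{T^k k!}\dual{y,\tfrac{\mathrm{d}^k}{\dd t^k}\mu^{k-\alpha,k}}_{(0,T)}
\]
and integrate by parts $k$ times. The key point to verify is that all boundary terms vanish. At $t=T$ the factor $(T-t)^{k-\alpha-j}$ inside $\mu^{k-\alpha-j,i+1+j}$ vanishes since $k-\alpha-j\geqslant i+1-\alpha>0$ for $0\leqslant j\leqslant k-i-1$. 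At $t=0$, using $y^{(i)}(t)=t^{\alpha-i}E_{\alpha,\alpha+1-i}(-\lambda t^\alpha)$ and the boundedness \cref{eq:Eabz-est}, each $y^{(i)}(t)$ is of order $t^{\alpha-i}$ near the origin, so $\zeta_i(0)\sim t^{\alpha+1+j}\to 0$. Hence the analogue of \cref{eq:yk} holds and only the top-order term $(-1)^k\dual{y^{(k)},\mu^{k-\alpha,k}}_{(0,T)}$ survives.

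The main quantitative input is the derivative bound from \cref{lem:bd-dk-y}. Taking $\beta=\alpha+1-k$ so that $\alpha-\beta=k-1\in\mathbb Z$, the enlarged range applies and yields, after setting $s=\lambda t^\alpha$ and writing $\eta=1-\theta$,
\[
\snm{y^{(k)}(t)}
=\snm{t^{\alpha-k}E_{\alpha,\alpha+1-k}(-\lambda t^\alpha)}
\leqslant C_\alpha\,\Gamma(k-\theta\alpha)\,\lambda^{\theta-1}\,t^{\theta\alpha-k},
\]
valid for $-1\leqslant\theta\leqslant 1$. This is \cref{eq:dk-y} already available in the excerpt, and it is here that the exponent $\lambda^{\theta-1}$ (rather than $\lambda^\theta$ as in \cref{lem:regu-y-g-0-Besov}) is produced, since the extra factor $t^\alpha$ in $y$ absorbs one power of $\lambda$ through the $\Gamma$-function identity.

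Finally I would substitute this bound into $\dual{y^{(k)},\mu^{k-\alpha,k}}_{(0,T)}$ and use the beta-integral \cref{eq:int-Bab} to obtain
\[
\snm{\dual{y,S_k^{-\alpha,0}}_{\mu^{-\alpha,0}}}
\leqslant C_{\alpha,T}\,\lambda^{\theta-1}\,
\frac{\Gamma(k+1-\alpha)\Gamma(k-\theta\alpha)}{k!\,\Gamma(k+2+(\theta-1)\alpha)},
\]
and then apply Stirling's formula \cref{eq:str-pos} exactly as in \cref{eq:yk-est} to deduce $\snm{\dual{y,S_k^{-\alpha,0}}_{\mu^{-\alpha,0}}}\leqslant C_{\alpha,T}\lambda^{\theta-1}k^{-2-2\theta\alpha}$. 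Summing $(1+k^{2(1+2\alpha\theta-\epsilon)})\xi_k^{-\alpha,0}y_k^2$ with $\xi_k^{-\alpha,0}\sim k^{-1}$ produces a tail $\sum_{k\geqslant 1}k^{-1-2\epsilon}\leqslant 1/(2\epsilon)$, giving the claimed $\epsilon^{-1/2}$ dependence. The most delicate step is the boundary-term vanishing at $t=0$: unlike in \cref{lem:regu-y-g-0-Besov}, we work directly with $y$ (not $y-y_0$), and the argument relies essentially on the fact that $y$ carries the overall prefactor $t^\alpha$ that is strong enough to offset the negative powers coming from the derivatives of the Mittag-Leffler function; after this is in place, the remaining calculation is a direct transcription of the proof of \cref{lem:regu-y-g-0-Besov}.
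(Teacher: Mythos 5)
Your proposal is correct and follows exactly the route the paper intends: the paper omits this proof, stating only that it is "analogous to Lemma \ref{lem:regu-y-g-0-Besov}" once the derivative bound \cref{eq:dk-y} is in hand, and you have reconstructed that analogy faithfully, including the two points that actually need checking (the vanishing of the boundary terms at $t=0$ via the $t^{\alpha-i}$ behaviour of $y^{(i)}$, and the use of the enlarged range $0\leqslant\theta\leqslant 2$ in \cref{lem:bd-dk-y}, available since $\alpha-(\alpha+1-k)=k-1\in\mathbb Z$, which is what allows $\theta$ down to $-1$). The only blemishes are cosmetic: the unused substitutions "$s=\lambda t^\alpha$, $\eta=1-\theta$", and the tail bound should read $\sum_{k\geqslant1}k^{-1-2\epsilon}\leqslant 1+1/(2\epsilon)\leqslant C/\epsilon$, which still yields the claimed $\epsilon^{-1/2}$ factor.
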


	Finally, gathering \cref{lem:decomp-u}, \cref{lem:regu-y-g-0-Besov} and \cref{lem:regu-y-Besov} 
	implies the following regularity result in the Besov space.
	\begin{thm}\label{thm:Besov-regu-u-f-0}
		If $f= 0$ and $u_0\in \dot H^\gamma(\Omega)$ with $\max\{-1,1-1/\alpha\}<\gamma\leqslant 3$, then the weak solution 
		defined by \cref{eq:weak_form} belongs to $ \bb_{-\alpha,0}^{1+\alpha(\gamma-\beta)-\epsilon}(0,T;\dot H^{\beta}(\Omega))$ with 
		\begin{equation}\label{eq:regu-besov}
		\nm{u}_{\bb_{-\alpha,0}^{1+\alpha(\gamma-\beta)-\epsilon}(0,T;\dot H^{\beta}(\Omega))}
		\leqslant \frac{C_{\alpha,T}}{\sqrt{\epsilon}}
		\nm{u_0}_{\dot H^\gamma(\Omega)},
		\end{equation}
		where $\gamma-2\leqslant \beta\leqslant \gamma+2$, $\beta<\gamma+1/\alpha$, and  $0<\epsilon\leqslant 1+\alpha(\gamma-\beta)$. 
	\end{thm}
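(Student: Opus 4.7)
The plan is to reduce the vector-valued Besov estimate on $u$ to a mode-by-mode scalar estimate, by combining the spectral decomposition of \cref{lem:decomp-u} with the scalar Besov bound of \cref{lem:regu-y-g-0-Besov}. Under the hypothesis $f=0$, and since $\gamma>\max\{-1,1-1/\alpha\}$, \cref{lem:decomp-u} gives $u=\sum_{n=0}^{\infty} y_n\phi_n$, where each $y_n$ solves the homogeneous scalar fractional ODE with eigenvalue $\lambda_n$ and initial value $y_{n,0}$. By \cref{eq:ode-y0-y}, $y_n(t)=y_{n,0}\,E_{\alpha,1}(-\lambda_n t^{\alpha})$, which is precisely the form to which \cref{lem:regu-y-g-0-Besov} applies.

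Next, I would apply \cref{lem:regu-y-g-0-Besov} mode-wise with
\[
\theta = \frac{\gamma-\beta}{2},
\]
chosen so that $1+2\alpha\theta-\epsilon$ reproduces the target Besov exponent $1+\alpha(\gamma-\beta)-\epsilon$. A direct check shows that the hypotheses on $\theta$ correspond exactly to the theorem's constraints on $\beta$: the condition $-1\leqslant\theta\leqslant 1$ becomes $\gamma-2\leqslant\beta\leqslant\gamma+2$, and $1+2\alpha\theta>0$ becomes $\beta<\gamma+1/\alpha$. This yields the per-mode bound
\[
\nm{y_n}_{\bb_{-\alpha,0}^{1+\alpha(\gamma-\beta)-\epsilon}(0,T)}
\leqslant \frac{C_{\alpha,T}}{\sqrt{\epsilon}}\,\lambda_n^{(\gamma-\beta)/2}\,\snm{y_{n,0}},
\]
with constant uniform in $n$, since the proof of \cref{lem:regu-y-g-0-Besov} contains no hidden dependence on $\lambda$ beyond the explicit $\lambda^{\theta}$ factor.

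Finally, I would assemble the modes using the definition of the vector-valued Besov space. Because $\{\lambda_n^{-\beta/2}\phi_n\}_{n\geqslant 0}$ is an orthonormal basis of $\dot H^{\beta}(\Omega)$, the construction analogous to \cref{eq:def-HX} gives
\[
\nm{u}_{\bb_{-\alpha,0}^{1+\alpha(\gamma-\beta)-\epsilon}(0,T;\dot H^{\beta}(\Omega))}^{2}
= \sum_{n=0}^{\infty} \lambda_n^{\beta}\,\nm{y_n}_{\bb_{-\alpha,0}^{1+\alpha(\gamma-\beta)-\epsilon}(0,T)}^{2}.
\]
Inserting the per-mode bound, the powers $\lambda_n^{\beta}$ and $\lambda_n^{\gamma-\beta}$ combine to $\lambda_n^{\gamma}$, and the sum collapses to $(C_{\alpha,T}^{2}/\epsilon)\sum_n \lambda_n^{\gamma}y_{n,0}^{2} = (C_{\alpha,T}^{2}/\epsilon)\nm{u_0}_{\dot H^{\gamma}(\Omega)}^{2}$, which is exactly \cref{eq:regu-besov}. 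The main point to watch is the bookkeeping of the $\lambda_n$ exponents; I do not anticipate a real obstacle, because the scalar lemma was engineered so that its $\lambda^{\theta}$ scaling exactly cancels the $\dot H^{\beta}$ basis renormalization, and this clean cancellation is precisely what makes the admissible range of $\beta$ in the theorem coincide with the admissible range of $\theta$ in \cref{lem:regu-y-g-0-Besov}.
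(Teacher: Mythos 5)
Your proposal is correct and follows exactly the route the paper intends: the paper gives no written proof beyond the remark that the theorem follows by "gathering" \cref{lem:decomp-u} and \cref{lem:regu-y-g-0-Besov}, which is precisely your mode-wise application of the scalar Besov bound with $\theta=(\gamma-\beta)/2$ followed by summation against the orthonormal basis $\{\lambda_n^{-\beta/2}\phi_n\}$ of $\dot H^{\beta}(\Omega)$. Your translation of the constraints on $\theta$ into the stated constraints on $\beta$, your inclusion of the $\snm{y_{n,0}}$ factor (omitted by typo in the lemma's statement but present in its proof), and the exponent bookkeeping $\lambda_n^{\beta}\cdot\lambda_n^{\gamma-\beta}=\lambda_n^{\gamma}$ are all correct.
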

	\begin{thm}\label{thm:Besov-regu-u-u0-0}
		If $u_0=0$ and $f = v\in\dot H^{\gamma}(\Omega)$ with $\gamma\geqslant -1$, then the weak solution 
		defined by \cref{eq:weak_form} belongs to $ \bb_{-\alpha,0}^{1+\alpha(2+\gamma-\beta)-\epsilon}(0,T;\dot H^{\beta}(\Omega))$ with 
		\[
		\nm{u}_{\bb_{-\alpha,0}^{1+\alpha(2+\gamma-\beta)-\epsilon}(0,T;\dot H^{\beta}(\Omega))}
		\leqslant \frac{C_{\alpha,T}}{\sqrt{\epsilon}}
		\nm{v}_{\dot H^\gamma(\Omega)},
		\]
		where $\gamma\leqslant \beta\leqslant 4+\gamma$, $\beta<2+\gamma+1/\alpha$, and $0<\epsilon\leqslant 1+\alpha(2+\gamma-\beta)$. 
	\end{thm}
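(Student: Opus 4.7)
The plan is to mirror the strategy used for \cref{thm:Besov-regu-u-f-0}: invoke the modal expansion from \cref{lem:decomp-u} to reduce the analysis to scalar fractional ODEs, and then apply the Besov estimate already established for each such ODE. The only change of ingredient is that the driving data is now a time-independent source $v$ rather than an initial value, so \cref{lem:regu-y-Besov} plays the role that \cref{lem:regu-y-g-0-Besov} played before. Since $u_0=0$ and $v\in\dot H^{\gamma}(\Omega)\hookrightarrow\dot H^{-1}(\Omega)$ (using $\gamma\geqslant-1$), the hypotheses of \cref{lem:decomp-u} are met. Expanding $v=\sum_{n=0}^\infty c_n\phi_n$ in the Dirichlet eigenbasis with $\nm{v}_{\dot H^{\gamma}(\Omega)}^2=\sum_{n}\lambda_n^{\gamma}c_n^2$, each modal component $y_n$ solves $\D_{0+}^{\alpha}y_n+\lambda_n y_n=c_n$ with $y_n(0)=0$, and by linearity together with \cref{eq:y-y0-0-Besov} it takes the explicit form $y_n(t)=c_n\,t^{\alpha}E_{\alpha,\alpha+1}(-\lambda_n t^{\alpha})$.

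The second step is to choose the parameter $\theta:=1+(\gamma-\beta)/2$ and apply \cref{lem:regu-y-Besov} to each $y_n$ with this value of $\theta$. This choice makes the Besov smoothness index $1+2\alpha\theta$ equal to the advertised $1+\alpha(2+\gamma-\beta)$. Before applying the lemma, I would carry out an algebraic check that its admissibility conditions $-1\leqslant\theta\leqslant 1$ and $1+2\alpha\theta>0$ translate exactly into the stated bounds $\gamma\leqslant\beta\leqslant 4+\gamma$ and $\beta<2+\gamma+1/\alpha$, respectively. The lemma then supplies, for any $0<\epsilon\leqslant 1+\alpha(2+\gamma-\beta)$, the modal bound
\[
\nm{y_n}_{\bb_{-\alpha,0}^{1+\alpha(2+\gamma-\beta)-\epsilon}(0,T)}\leqslant\frac{C_{\alpha,T}}{\sqrt{\epsilon}}\,\snm{c_n}\,\lambda_n^{\theta-1}=\frac{C_{\alpha,T}}{\sqrt{\epsilon}}\,\snm{c_n}\,\lambda_n^{(\gamma-\beta)/2}.
\]

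The third and final step is to assemble the vector-valued norm. Using the vector-valued analogue of \cref{eq:def-HX} and the fact that the $\dot H^{\beta}(\Omega)$-norm of coefficients along $\{\phi_n\}$ is weighted by $\lambda_n^{\beta}$, I would square the above modal estimate, multiply by $\lambda_n^{\beta}$, and sum over $n$ to obtain
\[
\nm{u}_{\bb_{-\alpha,0}^{1+\alpha(2+\gamma-\beta)-\epsilon}(0,T;\dot H^{\beta}(\Omega))}^2\leqslant\frac{C_{\alpha,T}^2}{\epsilon}\sum_{n=0}^\infty\lambda_n^{\beta+(\gamma-\beta)}c_n^2=\frac{C_{\alpha,T}^2}{\epsilon}\nm{v}_{\dot H^{\gamma}(\Omega)}^2,
\]
which, after extracting square roots, is the desired estimate.

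The main point of care is not analytic but book-keeping: one must verify that the admissible window of $\theta$ in \cref{lem:regu-y-Besov} corresponds \emph{precisely} to the advertised range of $\beta$, and that the three powers of $\lambda_n$ that arise (from the spatial regularity weight $\lambda_n^{\beta}$, from the squared modal bound $\lambda_n^{\gamma-\beta}$, and from the $\dot H^{\gamma}$-norm of $v$) balance uniformly in $n$ so that no spectral lower bound $\lambda_*$ enters the constant. In contrast to \cref{thm:Besov-regu-u-f-0}, no lower-bound restriction of the form $\gamma>1-1/\alpha$ is needed here, since with $y_n(0)=0$ the initial-data obstruction that forced this constraint in the previous theorem simply does not arise.
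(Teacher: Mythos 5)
Your proposal is correct and follows exactly the route the paper intends: it reduces to the modal ODEs via \cref{lem:decomp-u}, applies \cref{lem:regu-y-Besov} with $\theta=1+(\gamma-\beta)/2$ (whose admissibility window $-1\leqslant\theta\leqslant1$, $1+2\alpha\theta>0$ does translate precisely into $\gamma\leqslant\beta\leqslant4+\gamma$ and $\beta<2+\gamma+1/\alpha$), and sums the squared modal bounds weighted by $\lambda_n^{\beta}$ so that the powers of $\lambda_n$ collapse to $\lambda_n^{\gamma}$. This is exactly the "gathering" argument the paper invokes without writing out.
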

	\section{Discretization and  Error Analysis}
	\label{sec:main}
	Let  $\mathcal K_h $ be a conventional conforming and 
	shape regular triangulation of $ \Omega $
	consisting of $ d $-simplexes, and let $ h $  denote the maximum diameter of the
	elements in $ \mathcal K_h $. Define
	\[
	X_h:= \left\{
	v_h \in \dot H^1(\Omega):\
	v_h|_K \in P_1(K)\quad\forall \,K
	\in \mathcal K_h
	\right\}.
	\]
	For $M\in\mathbb N$, our time-spectral method for problem \cref{eq:model} 
	reads as follows: find $ U \in P_M(0,T)\otimes X_h$ such that
	\begin{equation}
	\label{eq:U}
	\dual{\D_{0+}^\alpha U,V}_{H^{\alpha/2}(0,T;L^2(\Omega))} +
	\dual{\nabla U,\nabla V}_{\Omega\times(0,T)} =
	\dual{f+\D_{0+}^\alpha u_0,V}_{P_M(0,T)\otimes X_h}
	\end{equation}
	for all $ V \in P_M(0,T)\otimes X_h $. It is easy to see that 
	\cref{eq:U} admits a unique solution 
	$U\in P_M(0,T)\otimes X_h$ such that
	\[
	\nm{U}_{\mathcal X} \leqslant C_{\alpha}
	\nm{f+\D_{0+}^\alpha u_0 }_{\mathcal X^*},
	\]
	where the space $\mathcal X$ and its norm $\nm{\cdot}_{\mathcal X}$ are defined in \cref{sec:regu}.
	
	We now present our main error estimates. Note that all of the following results are optimal with respect to the solution regularity. 
	Also, we emphasis that, in \cref{thm:est-u0,thm:est-f}, the best rates $1+\alpha$ and $1+2\alpha$ under $H^{\alpha/2}(0,T;L^2(\Omega))$-norm and $L^2(0,T;\dot H^1(\Omega))$-norm are sharp and cannot be improved 
	even for smoother $u_0$ and $v$, respectively. This is verified by the numerical results in \cref{sec:numer}.
	\begin{thm}\label{thm:est-u}
		If the solution to problem \cref{eq:model} is of the form $u(x,t) = t^\beta\phi(x)$ 
		with $\beta>(\alpha-1)/2$ and $\phi\in\dot H^{2}(\Omega)$, then 
		\begin{align}
			\nm{u-U}_{L^{2}(0,T;\dot H^1(\Omega))} 
			\lesssim {}&\left(h+M^{-1-2\beta}\right) \nm{\phi}_{\dot H^{2}(\Omega)},
			\label{eq:est-u-L2H1}\\
			\nm{u-U}_{H^{\alpha/2}(0,T;L^2(\Omega))} 
			\lesssim {}&\left(h^2+M^{\alpha-1-2\beta}\right) 
			\nm{\phi}_{\dot H^{2}(\Omega)}.
			\label{eq:est-u-HsL2}
		\end{align}
	\end{thm}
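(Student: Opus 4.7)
The plan is to combine Galerkin orthogonality with the separable structure $u=t^\beta\phi$, sharp one-dimensional spectral approximation of the singular factor $t^\beta$, and standard finite element theory for the smooth factor $\phi$. By \cref{lem:coer} applied with $\gamma=\alpha/2$, the bilinear form underlying \cref{eq:U} is continuous and coercive on $\mathcal X$, so Cea's lemma gives the abstract quasi-optimality
\[
\nm{u-U}_{\mathcal X}\lesssim \inf_{V\in P_M(0,T)\otimes X_h}\nm{u-V}_{\mathcal X}.
\]

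Exploiting the separable structure I would take the tensor-product comparator $V^*(x,t):=\bigl(\pi_M t^\beta\bigr)(t)\,R_h\phi(x)$, where $\pi_M$ is the orthogonal projection onto $P_M(0,T)$ in the weighted space $L^2_{\mu^{-\alpha,0}}(0,T)$ (fitting the Besov scale of \cref{sec:regu}) and $R_h:\dot H^1(\Omega)\to X_h$ is the Ritz projection. The identity
\[
u-V^*=(t^\beta-\pi_M t^\beta)\,\phi+(\pi_M t^\beta)(\phi-R_h\phi)
\]
cleanly decouples the temporal and spatial approximation errors. Repeating the Rodrigues-formula and Jacobi-coefficient argument of \cref{lem:regu-y-g-0-Besov} for the pure power $t^\beta$ establishes $t^\beta\in \bb_{-\alpha,0}^{1+2\beta-\epsilon}(0,T)$ and the sharp rates
\[
\nm{t^\beta-\pi_M t^\beta}_{L^2(0,T)}\lesssim M^{-1-2\beta},\qquad \snm{t^\beta-\pi_M t^\beta}_{H^{\alpha/2}(0,T)}\lesssim M^{\alpha-1-2\beta}.
\]
Combined with the standard Ritz estimates $\nm{\phi-R_h\phi}_{L^2(\Omega)}+h\nm{\phi-R_h\phi}_{\dot H^1(\Omega)}\lesssim h^2\nm{\phi}_{\dot H^2(\Omega)}$, the splitting yields the interpolation bounds $\nm{u-V^*}_{L^2(0,T;\dot H^1(\Omega))}\lesssim(h+M^{-1-2\beta})\nm{\phi}_{\dot H^2(\Omega)}$ and $\snm{u-V^*}_{H^{\alpha/2}(0,T;L^2(\Omega))}\lesssim(h^2+M^{\alpha-1-2\beta})\nm{\phi}_{\dot H^2(\Omega)}$.

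The delicate issue is that a single invocation of quasi-optimality in the coupled $\mathcal X$-norm only produces the weaker bound $h+M^{\alpha-1-2\beta}$ in both seminorms, falling short of the stated $M^{-1-2\beta}$ rate in the $L^2(0,T;\dot H^1(\Omega))$ component and of the stated $h^2$ rate in the $H^{\alpha/2}(0,T;L^2(\Omega))$ component, so the two estimates must be decoupled. For \cref{eq:est-u-L2H1} I would plug the discrete residual $V^*-U$ back into the error equation and, via \cref{lem:coer} and Young's inequality, absorb the temporal cross term so that only $\nm{u-V^*}_{L^2(0,T;\dot H^1(\Omega))}$ survives on the right-hand side. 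For \cref{eq:est-u-HsL2} the upgrade from $h$ to $h^2$ in space would come from an Aubin--Nitsche-type duality: I would pose a backward fractional dual problem with the right-sided operator $\D_{T-}^\alpha$ and data in $L^2(0,T;L^2(\Omega))$, invoke the right-sided analogue of \cref{thm:regu-u0}, and exploit Galerkin orthogonality against the Ritz projection of the dual solution to retain the full $h^2$ rate. The main obstacle is exactly this duality setup: building the time-reversed dual problem, confirming its regularity with only $L^2$ data, and ensuring compatibility with the $H^{\alpha/2}(0,T;L^2(\Omega))$ norm of the claim demands a careful time-reversed analogue of the Besov-regularity machinery of \cref{sec:regu}.
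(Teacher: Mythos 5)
Your skeleton matches the paper's: the comparator $W=\Phi_M^{-\alpha,0}R_hu$ (your $V^*$), the sharp one-dimensional rates $\nm{(I-\Phi_M^{-\alpha,0})t^\beta}_{L^2(0,T)}\lesssim M^{-1-2\beta}$ and $\snm{(I-\Phi_M^{-\alpha,0})t^\beta}_{H^{\alpha/2}(0,T)}\lesssim M^{\alpha-1-2\beta}$ obtained from the Rodrigues formula (this is exactly \cref{lem:Phi-est-tr}), the standard Ritz bounds, and the correct diagnosis that a single quasi-optimality estimate in $\mathcal X$ cannot deliver both \cref{eq:est-u-L2H1} and \cref{eq:est-u-HsL2}. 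The gap is in your remedy for the temporal cross term. Testing the error equation with $V=V^*-U$ and estimating $\dual{\D_{0+}^\alpha(u-V^*),V}_{H^{\alpha/2}(0,T;L^2(\Omega))}$ by \cref{lem:coer} gives $\snm{u-V^*}_{H^{\alpha/2}(0,T;L^2(\Omega))}\snm{V}_{H^{\alpha/2}(0,T;L^2(\Omega))}$; Young's inequality only lets you absorb the factor carrying the test function $V$ into the coercive left-hand side, and you are left with $\snm{u-V^*}_{H^{\alpha/2}(0,T;L^2(\Omega))}^2\sim M^{2(\alpha-1-2\beta)}$ on the right. That is precisely the rate you set out to beat, so this route yields only $h+M^{\alpha-1-2\beta}$ in \cref{eq:est-u-L2H1}, not $h+M^{-1-2\beta}$. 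The paper's mechanism is not absorption but exact cancellation: by \cref{eq:DsS}, $\D_{0+}^{\alpha/2}(I-\Phi_M^{-\alpha,0})v$ and $\D_{T-}^{\alpha/2}q$ for $q\in P_M(0,T)$ expand in the single $\mu^{-\alpha/2}$-orthogonal family $\{S_k^{-\alpha/2}\}$ over disjoint index ranges, whence $\dual{\D_{0+}^\alpha(I-\Phi_M^{-\alpha,0})v,q}_{H^{\alpha/2}(0,T)}=0$ (\cref{lem:Da}). This identity, combined with the Ritz orthogonality for the spatial cross term, is what produces the decoupled bound of \cref{lem:u-U}, namely $\nm{V^*-U}_{\mathcal X}\lesssim\nm{(I-R_h)u}_{H^{\alpha/2}(0,T;L^2(\Omega))}+\nm{(I-\Phi_M^{-\alpha,0})u}_{L^2(0,T;\dot H^1(\Omega))}\lesssim h^2+M^{-1-2\beta}$. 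You chose the $(-\alpha,0)$-weighted projection for the right space but justified it only by ``fitting the Besov scale''; its orthogonality against the fractional bilinear form is the essential, and missing, ingredient.

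A secondary point: the backward-in-time fractional dual problem you propose for the $h^2$ rate in \cref{eq:est-u-HsL2} is not needed. Once the gradient cross term is removed by Ritz orthogonality, the spatial contribution enters only through $\nm{(I-R_h)u}_{H^{\alpha/2}(0,T;L^2(\Omega))}$, and since $u=t^\beta\phi$ is separable this equals $\snm{t^\beta}_{H^{\alpha/2}(0,T)}\nm{(I-R_h)\phi}_{L^2(\Omega)}\lesssim h^2\nm{\phi}_{\dot H^2(\Omega)}$ by the standard elliptic $L^2$ estimate on the convex domain; no space-time duality or time-reversed regularity theory is required.
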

	\begin{thm}
		\label{thm:est-u0}
		If $f=0$ and $u_0\in\dot H^\gamma(\Omega)$ with $\max\{-1,1-1/\alpha\}<\gamma\leqslant 2$, then
		\begin{equation}
		\nm{u-U}_{H^{\alpha/2}(0,T;L^2(\Omega))} 
		\lesssim {}\left(h^{\min\{2,\gamma+1,1/\alpha+\gamma-1\}}
		+M^{-1-\alpha(\gamma-1)}\right) 
		\nm{u_0}_{\dot H^{\gamma}(\Omega)}.
		\label{eq:est-u0-HsL2}
		\end{equation}
		Moreover, if $u_0\in \dot H^\gamma(\Omega)$ with $\max\{-1,1-1/\alpha\}<\gamma\leqslant 3$, then 
		\begin{equation}
		\label{eq:est-u0-L2H1}
		\nm{u-U}_{L^{2}(0,T;\dot H^1(\Omega))} 
		\lesssim {}\left(\epsilon_hh^{\min\{1,\gamma+1,1/\alpha+\gamma-1\}} +M^{-1-\alpha(\gamma-1)}\right) \nm{u_0}_{\dot H^{\gamma}(\Omega)},
		\end{equation}
		%
		where $\epsilon_h=1$ if $\alpha\neq1/2$, and $\epsilon_h=\sqrt{\snm{\ln h}}$ if $\alpha=1/2$.
	\end{thm}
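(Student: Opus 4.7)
The plan is to combine a Galerkin-orthogonality / Cea-type argument with the approximation theory of shifted Jacobi polynomials and the Besov regularity established in \cref{thm:Besov-regu-u-f-0}. Subtracting \cref{eq:U} from \cref{eq:weak_form} restricted to discrete test functions gives
\[
\dual{\D_{0+}^\alpha(u-U),V}_{H^{\alpha/2}(0,T;L^2(\Omega))}+\dual{\nabla(u-U),\nabla V}_{\Omega\times(0,T)}=0
\]
for every $V\in P_M(0,T)\otimes X_h$. Combined with the coercivity supplied by \cref{lem:coer} and a continuity bound, this delivers the Cea-type estimate
\[
\nm{u-U}_{\mathcal X}\lesssim \inf_{V\in P_M(0,T)\otimes X_h}\nm{u-V}_{\mathcal X}.
\]

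I would then construct a concrete approximant $V=I_M R_h u$, where $R_h\colon \dot H^1(\Omega)\to X_h$ is the Ritz projection and $I_M$ is the $L^2_{\mu^{-\alpha,0}}$-orthogonal projection onto $P_M(0,T)$, whose choice is natural because it is compatible with the Besov scale $\bb^{s}_{-\alpha,0}(0,T)$. The splitting
\[
u - I_M R_h u = (I-R_h)u + (I-I_M)R_h u
\]
isolates a purely spatial and a purely temporal contribution. The spatial term is bounded slice-by-slice using piecewise-linear FEM estimates on $\dot H^{\sigma}(\Omega)$ together with the maximal spatial regularity provided by \cref{thm:regu-u0}; this produces the exponents $h^{\min\{2,\gamma+1,1/\alpha+\gamma-1\}}$ and $h^{\min\{1,\gamma+1,1/\alpha+\gamma-1\}}$, the three arguments of the minimum reflecting the usual piecewise-linear approximation order, the available regularity encoded by $\gamma$, and the critical Besov threshold $1/\alpha$ appearing in \cref{thm:Besov-regu-u-f-0}. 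The temporal term is treated via a Jacobi-polynomial estimate of the form $\nm{w-I_M w}_{L^2_{\mu^{-\alpha,0}}}\lesssim M^{-s}\nm{w}_{\bb^{s}_{-\alpha,0}}$ applied mode-by-mode, the global bound following by Parseval in the spatial variable and \cref{thm:Besov-regu-u-f-0} with $\beta=1$, which yields temporal Besov order $1+\alpha(\gamma-1)$ and hence the announced rate $M^{-1-\alpha(\gamma-1)}$. The assumption $\gamma>\max\{-1,1-1/\alpha\}$ is exactly what keeps this Besov index positive, so that \cref{thm:Besov-regu-u-f-0} is applicable.

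The main obstacle will be obtaining the sharp temporal rate $M^{-1-\alpha(\gamma-1)}$ in the stronger norm $H^{\alpha/2}(0,T;L^2(\Omega))$: a direct application of the Jacobi estimate between $L^2$ and $H^{\alpha/2}$ loses a factor $M^{\alpha/2}$ and would only yield $M^{-1-\alpha\gamma+\alpha/2}$. To recover the sharp order one has to exploit the structure of the scalar ODE satisfied by the coefficients $y_n$ from \cref{lem:decomp-u}; concretely, the dominant singular contribution is $y_{n,0}$ times a multiple of $t^{\alpha}$ (cf.\ \cref{rem:regu-ML}), and a duality-in-time argument coupled with a simultaneous use of \cref{lem:regu-y-g-0-Besov} for two different admissible values of $\theta$ produces the required cancellation against the extra $\lambda_n^{\alpha/2}$ appearing when one passes from the $L^2$-in-time norm to the $H^{\alpha/2}$-in-time norm. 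The logarithmic factor $\epsilon_h=\sqrt{|\ln h|}$ in the critical case $\alpha=1/2$ will finally emerge from the borderline bound of \cref{thm:regu-u0}, where $\eta(\epsilon)=\sqrt{\epsilon}$ enters, by choosing $\epsilon\sim |\ln h|^{-1}$ to balance the logarithmic blow-up of the constant against the FEM interpolation error on $\dot H^{\sigma}(\Omega)$.
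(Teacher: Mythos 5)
Your overall architecture (splitting into a Ritz part and a weighted-Jacobi temporal projection part, feeding in the regularity theorems, and extracting the $\sqrt{\snm{\ln h}}$ factor at $\alpha=1/2$ by taking $\epsilon\sim\snm{\ln h}^{-1}$ in \cref{thm:regu-u0}) matches the paper, and your choice of $I_M=\Phi_M^{-\alpha,0}$ is the right projection. However, there are two genuine gaps. First, the plain Cea bound $\nm{u-U}_{\mathcal X}\lesssim\inf_V\nm{u-V}_{\mathcal X}$ is too crude to give the stated exponents: it forces you to measure $(I-R_h)u$ in $L^2(0,T;\dot H^1(\Omega))$, which costs only one power of $h$, so \cref{eq:est-u0-HsL2} would come out with $h^{\min\{1,\ldots\}}$ instead of $h^{\min\{2,\ldots\}}$; and it forces you to measure $(I-\Phi_M^{-\alpha,0})u$ in $H^{\alpha/2}(0,T;L^2(\Omega))$, which caps the admissible range at $\gamma\leqslant 2$ and so cannot deliver \cref{eq:est-u0-L2H1} for $2<\gamma\leqslant 3$. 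The paper's \cref{lem:u-U} decouples the two error sources asymmetrically — the $H^{\alpha/2}(0,T;L^2(\Omega))$ error sees only $\nm{(I-R_h)u}_{H^{\alpha/2}(0,T;L^2(\Omega))}$ on the spatial side, and the $L^2(0,T;\dot H^1(\Omega))$ error sees only $\nm{(I-\Phi_M^{-\alpha,0})u}_{L^2(0,T;\dot H^1(\Omega))}$ on the temporal side — and this decoupling rests on the orthogonality $\dual{\D_{0+}^{\alpha}(I-\Phi_{M}^{-\alpha,0})v,q}_{H^{\alpha/2}(0,T)}=0$ for $q\in P_M(0,T)$ (\cref{lem:Da}), used jointly with the Ritz orthogonality in the error equation. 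You never invoke this property of $\Phi_M^{-\alpha,0}$, and without it the exponents in the theorem are out of reach.

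Second, your treatment of the temporal projection error is not sharp and the proposed fix is not a proof. Applying a generic Jacobi approximation estimate to the vector-valued Besov regularity of \cref{thm:Besov-regu-u-f-0} inevitably produces a spurious $\sqrt{\ln M}$ (the Besov index there is $1+\alpha(\gamma-\beta)-\epsilon$ with constant $\epsilon^{-1/2}$), which the paper explicitly flags as only quasi-optimal. The sharp rate is obtained instead from the mode-wise, $\lambda$-explicit bounds of \cref{lem:Phi-L2-w,lem:Phi-est-Hs}: $\nm{(I-\Phi_M^{-\alpha,0})y}_{L^2}\lesssim\snm{y_0}\lambda^{\theta}M^{-1-2\alpha\theta}$ and $\snm{(I-\Phi_M^{-\alpha,0})y}_{H^{\alpha/2}}\lesssim\snm{y_0}\lambda^{\theta}M^{\alpha-1-2\alpha\theta}$, where one chooses $\theta=(\gamma-1)/2$ for the $L^2(0,T;\dot H^1(\Omega))$ component and $\theta=\gamma/2$ for the $H^{\alpha/2}(0,T;L^2(\Omega))$ component; the extra $M^{\alpha}$ lost in the $H^{\alpha/2}$ bound is exactly recovered by the larger admissible $\theta$, since no $\lambda^{1/2}$ spatial weight is needed there. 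This trade between the power of $\lambda$ and the power of $M$ cannot be seen from the fixed-$\beta$ vector-valued Besov regularity you propose, and your "duality-in-time argument producing the required cancellation" is left unspecified (your bookkeeping of the purported $M^{\alpha/2}$ loss is also internally inconsistent). The second lemma itself requires a nontrivial computation with the two Jacobi expansions $S_k^{-\alpha,0}$ and $S_k^{0,-\alpha}$ and the fractional-derivative identities \cref{eq:DsS}, which your outline does not supply.
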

	\begin{thm}\label{thm:est-f}
		If $u_0=0$ and $f(t,x)=v(x)\in \dot H^\gamma(\Omega)$ with $-1\leqslant \gamma\leqslant 0$, then 
		\begin{equation*}
			\nm{u-U}_{H^{\alpha/2}(0,T;L^2(\Omega))} 
			\lesssim {}\left(h^{\gamma+2}+M^{-1-\alpha(\gamma+1)}\right) 
			\nm{v}_{\dot H^{\gamma}(\Omega)}.
		\end{equation*}
		In addition, if $v\in \dot H^\gamma(\Omega)$ with $-1\leqslant \gamma\leqslant 1$, then 
		\begin{equation*}
			\nm{u-U}_{L^2(0,T;\dot H^1(\Omega))}
			\lesssim{}\left(h^{\min\{1,\gamma+1\}}
			+M^{-1-\alpha(\gamma+1)}\right)
			\nm{v}_{\dot H^{\gamma}(\Omega)}.
		\end{equation*}
	\end{thm}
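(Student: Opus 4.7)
The plan is to mirror the structure used for \cref{thm:est-u,thm:est-u0}, combining Galerkin orthogonality, the Besov regularity from \cref{thm:Besov-regu-u-u0-0}, and a tensor-product (time-spectral $\otimes$ spatial-FE) best-approximation argument. First I would derive Galerkin orthogonality from \cref{eq:weak_form,eq:U}: for all $V\in P_M(0,T)\otimes X_h$,
\[
\dual{\D_{0+}^\alpha(u-U),V}_{H^{\alpha/2}(0,T;L^2(\Omega))}+\dual{\nabla(u-U),\nabla V}_{\Omega\times(0,T)}=0,
\]
and combine with the coercivity of the bilinear form (a consequence of \cref{lem:coer}) to obtain a Céa-type bound $\nm{u-U}_{\mathcal X}\lesssim \inf_{V}\nm{u-V}_{\mathcal X}$, uniformly in $h$ and $M$.

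Next I would take as comparator $V=\Pi_M R_h u$, where $R_h$ is the standard Ritz projection onto $X_h$ and $\Pi_M$ is a time-spectral projection onto $P_M(0,T)$ naturally associated with $\bb_{-\alpha,0}^{s}(0,T)$ (e.g.\ the $\mu^{-\alpha,0}$-weighted $L^2$-orthogonal projection built from shifted Jacobi polynomials). Since $R_h$ acts in space only and $\Pi_M$ acts in time only, the error decomposes as $u-V=(u-R_h u)+R_h(u-\Pi_M u)$, and the two factors can be analyzed independently via Fubini-type arguments.

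For the $L^2(0,T;\dot H^1(\Omega))$ bound, I would: apply \cref{thm:Besov-regu-u-u0-0} with $\beta=\min\{2,\gamma+2\}$ to control $\nm{u-R_h u}_{L^2(0,T;\dot H^1(\Omega))}$ by $h^{\min\{1,\gamma+1\}}\nm{v}_{\dot H^\gamma(\Omega)}$ using a standard Ritz estimate; and apply \cref{thm:Besov-regu-u-u0-0} with $\beta=1$ to get $u\in \bb_{-\alpha,0}^{1+\alpha(\gamma+1)-\epsilon}(0,T;\dot H^1(\Omega))$, then invoke the spectral approximation estimate (in the spirit of the lemma referenced as \textit{Lemma~\ref{lem:Phi-L2-w}} in the introduction) to bound $\nm{R_h(u-\Pi_M u)}_{L^2(0,T;\dot H^1(\Omega))}$ by $M^{-1-\alpha(\gamma+1)}\nm{v}_{\dot H^\gamma(\Omega)}$, using the $\dot H^1$-stability of $R_h$. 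For the $H^{\alpha/2}(0,T;L^2(\Omega))$ bound, the spatial rate $h^{\gamma+2}$ is better than what Céa in $\mathcal X$ yields directly, so I would deploy an Aubin--Nitsche duality argument against a dual time-fractional problem with right-hand side $u-U$; Galerkin orthogonality then gains one additional power of $h$ relative to the $\dot H^1$-rate, producing $h^{\gamma+2}$. For the temporal factor I would use $\beta=0$ in \cref{thm:Besov-regu-u-u0-0} to place $u$ in $\bb_{-\alpha,0}^{1+\alpha(\gamma+2)-\epsilon}(0,T;L^2(\Omega))$ and interpolate the spectral approximation estimate between $L^2$ and $H^{\alpha}$ to recover the rate $M^{-1-\alpha(\gamma+1)}$ in $H^{\alpha/2}$.

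The main obstacle will be the Aubin--Nitsche step adapted to the fractional-in-time setting: one must solve a backward dual problem of the form $\D_{T-}^{\alpha}\psi-\Delta\psi = u-U$, justify its regularity via a right-sided analogue of \cref{thm:Besov-regu-u-u0-0}, and carefully pair it against the primal error using the symmetry identities of \cref{lem:coer}. A secondary technical point is verifying the sharp spectral approximation rates in the weighted Besov scale $\bb_{-\alpha,0}^{s}$ with the $\mu^{-\alpha,0}$ weight, so that the temporal exponent $1+\alpha(\gamma+1)$ comes out identically in both error norms; once these two ingredients are in place, the stated estimates follow by the triangle inequality together with the stability bound $\nm{U}_{\mathcal X}\lesssim \nm{v}_{\dot H^\gamma(\Omega)}$.
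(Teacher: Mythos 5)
Your skeleton (Galerkin orthogonality, the tensor comparator $\Phi_M^{-\alpha,0}R_h u$, and splitting the error into a spatial Ritz part and a temporal projection part) matches the paper's \cref{lem:u-U}, but two of your key steps diverge in ways that matter. First, the Aubin--Nitsche duality argument you propose for the $H^{\alpha/2}(0,T;L^2(\Omega))$ bound is both unnecessary and the weakest link in your plan. The rate $h^{\gamma+2}$ is not a Nitsche gain over an energy-norm estimate; it is simply the $L^2(\Omega)$-accuracy of the Ritz projection showing up in the right slot. In the error equation with $W=\Phi_M^{-\alpha,0}R_hu$, Ritz orthogonality kills $\dual{\nabla(I-R_h)u,\nabla V}$ and \cref{lem:Da} kills $\dual{\D_{0+}^{\alpha}(I-\Phi_M^{-\alpha,0})u,V}_{H^{\alpha/2}(0,T;L^2(\Omega))}$, so what survives is exactly $\nm{(I-R_h)u}_{H^{\alpha/2}(0,T;L^2(\Omega))}$ (which already carries $h^{\gamma+2}$) plus $\nm{(I-\Phi_M^{-\alpha,0})u}_{\mathcal X}$; this is \cref{eq:err-HsL2}. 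Your plain C\'ea bound in $\mathcal X$ indeed cannot see $h^{\gamma+2}$, but the remedy is this refined decomposition, not a backward fractional dual problem whose regularity and discrete approximability you would still have to establish and which would entangle the $h$- and $M$-error contributions.

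Second, routing the temporal error through \cref{thm:Besov-regu-u-u0-0} plus a generic Jacobi approximation theorem cannot deliver the stated rates: the constant in \cref{eq:regu-besov} blows up like $\epsilon^{-1/2}$, so optimizing over $\epsilon$ leaves a logarithmic factor, and the paper explicitly flags this in the remark preceding \cref{lem:Phi-L2-w}. The paper instead proves the projection bounds \cref{eq:L2-y-A-1,eq:Hs-y-A-1} directly for $y(t)=t^{\alpha}E_{\alpha,\alpha+1}(-\lambda t^{\alpha})$, with the explicit factors $\lambda^{\theta-1}M^{-1-2\alpha\theta}$ and $\lambda^{\theta-1}M^{\alpha-1-2\alpha\theta}$, whose $\lambda$-power is then traded for the spatial index $\gamma$ through the eigenfunction expansion of \cref{lem:decomp-u}. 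Likewise, your plan to interpolate between $L^2$ and $H^{\alpha}$ to obtain the $H^{\alpha/2}$ projection error is not justified as stated: interpolating error bounds for a fixed function requires stability of $\Phi_M^{-\alpha,0}$ in the endpoint spaces, which you have not provided; the paper computes the $H^{\alpha/2}$ seminorm of the projection tail directly via \cref{lem:coer} and the Jacobi orthogonality. With these two repairs your argument collapses onto the paper's proof, which is obtained by running the proof of \cref{thm:est-u0} verbatim with \cref{lem:L2-y-A-1} in place of \cref{lem:Phi-L2-w,lem:Phi-est-Hs}.
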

	\subsection{Technical lemmas}
	Let $R_h:\dot H^1(\Omega)\longrightarrow X_h$ 
	be the well-known Ritz projection operator defined by
	\[
	\dual{\nabla(I-R_h)v,\nabla v_h} = 0\quad \forall\,v_h\in X_h,
	\]
	for which we have the standard estimate \cite{quarteroni_numerical_2008}
	\[
	\nm{(I-R_h)v}_{L^2(\Omega)}+h\nm{(I-R_h)v}_{\dot H^1(\Omega)}
	\lesssim h^{\gamma}\nm{v}_{\dot H^\gamma(\Omega)}
	\quad\forall\,v\in \dot H^\gamma(\Omega),
	\quad 1\leqslant \gamma\leqslant 2.
	\]
	
	In what follows, we give some 
	nontrivial	projection error bounds in terms of the solution to \cref{eq:ode-y-f-0}.  Of course, it is worth noticing that
	one can use the regularity results   in \cref{lem:regu-y-g-0-Besov,lem:regu-y-Besov} 
	together with standard projection error estimates \cite{guo_jacobi_2004} and interpolation techniques \cite{tartar_introduction_2007}
	to obtain quasi-optimal results (with logarithm factors). But here we can prove optimal error 
	estimates by recycling the proof of \cref{lem:regu-y-g-0-Besov} and extending 
	it to the estimation under the fractional norm $\nm{\cdot}_{H^{\alpha/2}(0,T)}$.
	\begin{lem}\label{lem:Phi-L2-w}
		Let $y(t)$ be given by \cref{eq:ode-y0-y} with $\lambda>0$ and $y_0\in\R$, then
		\begin{equation}
		\label{eq:Phi-L2}
		\nm{\big(I-\Phi_M^{-\alpha,0}\big)y}_{L^2_{\mu^{-\alpha,0}}(0,T)}
		\leqslant {}C_{\alpha,T} \snm{y_0}
		\lambda^{\theta}M^{-1-2\alpha\theta},
		\end{equation}
		where $-1\leqslant \theta\leqslant 1$ and $1+2\alpha\theta>0$.
	\end{lem}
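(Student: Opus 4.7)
The plan is to reuse the coefficient bounds derived in the proof of \cref{lem:regu-y-g-0-Besov} and apply them to the tail of the shifted Jacobi expansion of $y$. Since $\Phi_M^{-\alpha,0}$ is the orthogonal $L^2_{\mu^{-\alpha,0}}(0,T)$-projection onto $P_M(0,T)$, Parseval's identity with respect to the basis $\{S_k^{-\alpha,0}\}_{k=0}^\infty$ gives
\[
\nm{(I-\Phi_M^{-\alpha,0})y}_{L^2_{\mu^{-\alpha,0}}(0,T)}^2
= \sum_{k=M+1}^\infty \xi_k^{-\alpha,0}\, y_k^2,
\]
so the task reduces to controlling this tail sum.

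The crucial ingredient is the sharp estimate \cref{eq:yk-est}, already established in the proof of \cref{lem:regu-y-g-0-Besov}: combining Rodrigues' formula \cref{eq:y-Sk}, $k$-fold integration by parts, the vanishing of the boundary terms via \cref{eq:dkv}, and the Mittag-Leffler decay bound \cref{lem:bd-dk-y}, together with Stirling's formula \cref{eq:str-pos}, one has
\[
\snm{\dual{y,S_k^{-\alpha,0}}_{\mu^{-\alpha,0}}}
\leqslant C_{\alpha,T}\,\snm{y_0}\,\lambda^{\theta}\, k^{-2-2\theta\alpha}
\]
for any $-1\leqslant \theta\leqslant 1$. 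I would simply transplant this bound into the current setting: by \cref{eq:yk-xik}, $y_k = \xi_k^{-\alpha,0,\,-1}\dual{y,S_k^{-\alpha,0}}_{\mu^{-\alpha,0}}$, and $\xi_k^{-\alpha,0}=T^{1-\alpha}/(2k+1-\alpha)\sim k^{-1}$. Therefore
\[
\xi_k^{-\alpha,0}\, y_k^2
= \frac{\snm{\dual{y,S_k^{-\alpha,0}}_{\mu^{-\alpha,0}}}^2}{\xi_k^{-\alpha,0}}
\leqslant C_{\alpha,T}\,\snm{y_0}^2\,\lambda^{2\theta}\, k^{-3-4\theta\alpha}.
\]

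To finish, I would sum over $k>M$ and compare with the integral $\int_M^\infty r^{-3-4\theta\alpha}\,\dd r = M^{-2-4\theta\alpha}/(2+4\theta\alpha)$, which converges precisely under the hypothesis $1+2\alpha\theta>0$. This yields
\[
\sum_{k=M+1}^\infty \xi_k^{-\alpha,0}\, y_k^2
\leqslant C_{\alpha,T}\,\snm{y_0}^2\,\lambda^{2\theta}\, M^{-2-4\alpha\theta},
\]
and taking square roots gives \cref{eq:Phi-L2}. No genuinely new obstacle arises since all the analytical work (Rodrigues, integration by parts, the Mittag-Leffler bounds, Stirling's formula) has already been carried out for \cref{lem:regu-y-g-0-Besov}; the only subtlety is that we are estimating a tail sum rather than the full Besov norm, which is why the optimal rate $M^{-1-2\alpha\theta}$ appears without the $1/\sqrt{\epsilon}$ logarithm-type loss seen in \cref{eq:y-g-0-Besov}.
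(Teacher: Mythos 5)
Your proposal is correct and follows essentially the same route as the paper: Parseval for the $L^2_{\mu^{-\alpha,0}}$-tail, the coefficient bound \cref{eq:yk-est} recycled from the proof of \cref{lem:regu-y-g-0-Besov}, and an integral comparison of $\sum_{k>M}k^{-3-4\theta\alpha}$ under the hypothesis $1+2\alpha\theta>0$. Your closing observation about why the tail sum avoids the $1/\sqrt{\epsilon}$ loss of \cref{eq:y-g-0-Besov} matches the paper's stated motivation for proving the projection estimate directly rather than via the Besov regularity.
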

	\begin{proof}	
		It is evident that $y\in L_{\mu^{-\alpha,0}}^2(0,T)$  and that the $L^2_{\mu^{-\alpha,0}}$-orthogonal projection of $y$ onto $P_M(0,T)$ is given by
		\[
		\Phi_{M}^{-\alpha,0}y:=
		\sum_{k=0}^{M}
		y_kS_k^{-\alpha,0},\quad 
		\]
		where
		\begin{equation}
		\label{eq:yk-lem-41}
		y_k = 
		\frac{2k+1 -\alpha}{T^{1-\alpha}}
		\dual{y,S_k^{-\alpha,0}}_{	\mu^{-\alpha,0}}.
		\end{equation}
		Thus we obtain
		\begin{equation*}
			\nm{\big(I-\Phi_M^{-\alpha,0}\big)y}^2_{L^2_{\mu^{-\alpha,0}}(0,T)}
			=T^{\alpha-1} \sum_{k=M+1}^{\infty}
			(2k+1 -\alpha)\snm{	\dual{y,S_k^{-\alpha,0}}_{	\mu^{-\alpha,0}}}^2,
		\end{equation*}
		Thanks to the estimate \cref{eq:yk-est}, we get
		\begin{equation}
		\label{eq:est-L2}
		\begin{split}
		{}&		\nm{\big(I-\Phi_M^{-\alpha,0}\big)y}_{L^2_{\mu^{-\alpha,0}}(0,T)}^2
		\leqslant 
		C_{\alpha,T}
		\snm{y_0}^2\lambda^{2\theta}
		\sum_{k = M+1}^{\infty} 	k^{-3-4\theta\alpha}\\
		\leqslant {}&		C_{\alpha,T}
		\snm{y_0}^2\lambda^{2\theta}
		\int_{M}^{\infty} r^{-3-4\theta\alpha}\dd r = 	
		C_{\alpha,T}
		\snm{y_0}^2\lambda^{2\theta} M^{-2-4\theta\alpha},
		\end{split}
		\end{equation}
		which establishes \cref{eq:Phi-L2} and finishes the proof of this lemma.
	\end{proof}
	\begin{rem}\label{rem:L2}
		Observing the standard inequality
		\[
		\nm{(I-\Phi_M^{-\alpha,0})y}_{L^2(0,T)}
		\leqslant {}C_{\alpha,T}
		\nm{(I-\Phi_M^{-\alpha,0})y}_{L^2_{\mu^{-\alpha,0}}(0,T)},
		\]
		we conclude from \cref{lem:Phi-L2-w} that
		\begin{equation*}
			\nm{(I-\Phi_M^{-\alpha,0})y}_{L^2(0,T)}
			\leqslant {}C_{\alpha,T} 	\snm{y_0}\lambda^{\theta}
			M^{-1-2\alpha\theta},
		\end{equation*}
		where $-1\leqslant \theta\leqslant 1$ and $1+2\alpha\theta>0$.
	\end{rem}
	\begin{lem}\label{lem:Phi-est-Hs}
		Let $y(t)$ be given by \cref{eq:ode-y0-y} with $\lambda>0$ and $y_0\in\R$, then
		\begin{align}
			\label{eq:Phi-est-Hs}
			\snm{(I-\Phi_M^{-\alpha,0})y}_{H^{\alpha/2}(0,T)} 
			\leqslant {}&C_{\alpha,T} \snm{y_0}
			\lambda^{\theta}M^{\alpha-1-2\alpha\theta},
		\end{align}
		where $-1\leqslant \theta\leqslant 1$ and $1+2\alpha\theta>\alpha$.
	\end{lem}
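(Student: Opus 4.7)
The plan is to bootstrap the $L^2$-type estimate of \cref{lem:Phi-L2-w} (and \cref{rem:L2}) to the $H^{\alpha/2}$ seminorm by combining a dyadic decomposition of the residual $(I-\Phi_M^{-\alpha,0})y$ with a Bernstein-type inverse inequality; the expected $M^\alpha$ gain is precisely what matches the two estimates.

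Concretely, I would telescope
\[
(I-\Phi_M^{-\alpha,0})y \;=\; \sum_{j=0}^{\infty}\delta_j y,\qquad \delta_j y \;:=\; \bigl(\Phi_{2^{j+1}M}^{-\alpha,0}-\Phi_{2^{j}M}^{-\alpha,0}\bigr) y\;\in\; P_{2^{j+1}M}(0,T),
\]
with convergence in $L^2_{\mu^{-\alpha,0}}(0,T)$. Applying the triangle inequality for the seminorm, the Bernstein-type inverse inequality
\[
\snm{p_N}_{H^{\alpha/2}(0,T)} \;\leqslant\; C_{\alpha,T}N^\alpha\nm{p_N}_{L^2(0,T)}\qquad\forall\,p_N\in P_N(0,T),
\]
together with the trivial weight control $\nm{\cdot}_{L^2(0,T)}\leqslant T^{\alpha/2}\nm{\cdot}_{L^2_{\mu^{-\alpha,0}}(0,T)}$ arising from $(T-t)^{-\alpha}\geqslant T^{-\alpha}$, and two invocations of \cref{lem:Phi-L2-w} (one for each projection appearing in $\delta_j y$), yields
\[
\snm{\delta_j y}_{H^{\alpha/2}(0,T)} \;\leqslant\; C_{\alpha,T}\snm{y_0}\lambda^\theta (2^{j+1}M)^\alpha (2^{j}M)^{-1-2\alpha\theta}.
\]
Summing over $j\geqslant 0$ then collapses to a geometric series with common ratio $2^{\alpha-1-2\alpha\theta}$, which is strictly less than one precisely because of the hypothesis $1+2\alpha\theta>\alpha$, producing the claimed bound $C_{\alpha,T}\snm{y_0}\lambda^\theta M^{\alpha-1-2\alpha\theta}$.

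The main technical point is the Bernstein inequality above. It can be obtained from the classical $L^2$-Markov inequality $\nm{p_N'}_{L^2(0,T)}\lesssim N^2 T^{-1}\nm{p_N}_{L^2(0,T)}$ together with real interpolation between $L^2(0,T)$ and $H^1(0,T)$; alternatively, one may use $\snm{v}_{H^{\alpha/2}(0,T)}^2\leqslant \sec^2(\alpha\pi/2)\nm{\mathrm{D}_{0+}^{\alpha/2}v}_{L^2(0,T)}^2$ from \cref{lem:coer} and estimate $\mathrm{D}_{0+}^{\alpha/2}$ directly on polynomials of degree $N$. The role of the hypothesis $1+2\alpha\theta>\alpha$ is exactly to ensure strict convergence of the geometric series; the borderline case would bring in a logarithmic loss, which is why the strict inequality is assumed.
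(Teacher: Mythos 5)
Your proof is correct, but it takes a genuinely different route from the paper's. The paper never invokes an inverse inequality: it sets $z=(I-\Phi_M^{-\alpha,0})y$, expands $z$ simultaneously in the two families $\{S_k^{-\alpha,0}\}$ and $\{S_k^{0,-\alpha}\}$, uses the exact fractional-derivative formulas \cref{eq:DsS} for shifted Jacobi polynomials together with \cref{lem:coer} and the orthogonality of $\{S_k^{-\alpha/2}\}$ to write $\cos(\alpha\pi/2)\snm{z}_{H^{\alpha/2}(0,T)}^2$ as an explicit series over $k>M$ of products of the two coefficient sequences, and then bounds each coefficient via Rodrigues' formula and the derivative estimate \cref{eq:est-dk-y-homo}; the resulting tail $\sum_{k>M}k^{2\alpha-3-4\alpha\theta}$ is summable under exactly the same condition $1+2\alpha\theta>\alpha$ that makes your geometric series converge. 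Your dyadic decomposition plus the inverse estimate $\snm{p_N}_{H^{\alpha/2}(0,T)}\lesssim N^{\alpha}\nm{p_N}_{L^2(0,T)}$ bootstraps \cref{lem:Phi-L2-w} with no further use of the Jacobi structure, which is more modular (it would give \cref{lem:Phi-est-tr} and the $H^{\alpha/2}$ half of \cref{lem:L2-y-A-1} for free from the corresponding $L^2$ bounds) and makes the loss of exactly $M^{\alpha}$ transparent; the price is that you import an external Bernstein-type inequality (standard, and provable by Markov plus interpolation or via \cref{lem:coer} as you note) and you should add one sentence justifying the interchange of $\snm{\cdot}_{H^{\alpha/2}(0,T)}$ with the infinite telescoping sum, e.g.\ by observing that the partial sums are Cauchy in $H^{\alpha/2}(0,T)$ and converge to $(I-\Phi_M^{-\alpha,0})y$ in $L^2(0,T)$, or by lower semicontinuity of the Fourier-based seminorm under $L^2$ convergence. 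Both arguments produce a constant that degenerates as $1+2\alpha\theta\downarrow\alpha$, consistent with the strict inequality in the hypothesis.
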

	\begin{proof}
		Set 
		\[
		z :=(I-\Phi_M^{-\alpha,0})y = 
		\sum_{k=M+1}^{\infty}
		y_kS_k^{-\alpha,0},
		\]
		where $y_k$ is given by \cref{eq:yk-lem-41}. Since $z\in L_{\mu^{0,-\alpha}}^2(0,T)$, we have the orthogonal expansion
		\[
		z=	\sum_{k=0}^{\infty}z_kS_k^{0,-\alpha} \quad\text{with }
		z_k = {} \frac{2k+1 -\alpha}{T^{1-\alpha}}
		\dual{z,S_k^{0,-\alpha}}_{	\mu^{0,-\alpha}}.
		\]
		Thanks to \cref{eq:DsS}, we see that
		\[
		\D_{0+}^{\alpha/2}z ={}
		\sum_{k=M+1}^{\infty}
		\frac{	y_k k!}{\Gamma(k+1-\alpha/2)}
		t^{-\alpha/2}
		S_k^{-\alpha/2},\]
		\[
		\D_{T-}^{\alpha/2}z ={}
		\sum_{k=0}^{\infty}
		\frac{	z_k k!	(T-t)^{-\alpha/2}}{\Gamma(k+1-\alpha/2)}
		S_k^{-\alpha/2}.
		\]
		Hence, from \cref{lem:coer} and the orthogonality of $\{S_k^{-\alpha/2}\}_{k=0}^\infty$ with respect to the weight  $\mu^{-\alpha/2}$,  it follows that
		\begin{align}
			\cos(\alpha\pi/2)	|z|_{H^{\alpha/2}(0,T)}^2
			=	{}&	
			\dual{\D_{0+}^{\alpha/2} z,\D_{T-}^{\alpha/2} z}_{(0,T)}
			={}\sum_{k=M+1}^{\infty}
			\frac{T^{1-\alpha}y_kz_k\Gamma(k+1)}
			{(2k+1-\alpha) \Gamma(k+1-\alpha)}.
			\label{eq:Dsg-Dsg}
		\end{align}
		
		By the Rodrigues' formula \cref{eq:St}, we have
		\[
		\begin{split}
		y_k ={}&
		(-1)^k\frac{2k+1 -\alpha}{T^{k+1-\alpha} k!}
		\dual{y,\dfrac{\mathrm{d}^{k}}{\dd t^{k}}
			\mu^{k-\alpha,k}}_{(0,T)},\\
		z_k ={}&
		(-1)^k\frac{2k+1 -\alpha}{T^{k+1-\alpha} k!}
		\dual{z,\dfrac{\mathrm{d}^{k}}{\dd t^{k}}
			\mu^{k,k-\alpha}}_{(0,T)}.
		\end{split}
		\]
		It follows from \cref{eq:y-Sk,eq:yk} that
		\[
		\dual{y,\dfrac{\mathrm{d}^{k}}{\dd t^{k}}
			\mu^{k-\alpha,k}}_{(0,T)}
		= {}
		(-1)^k
		\dual{y^{(k)},\mu^{k-\alpha,k}}_{(0,T)}.
		\]
		When $k\geqslant M+1$, it holds $z^{(k)}(t) = y^{(k)}(t)$. 
		Therefore, applying the proof of \cref{eq:yk} gives
		\[
		\dual{z,\dfrac{\mathrm{d}^{k}}{\dd t^{k}}
			\mu^{k,k-\alpha}}_{(0,T)}
		= {}
		(-1)^k
		\dual{y^{(k)},\mu^{k,k-\alpha}}_{(0,T)}.
		\]
		Substituting the above two equalities into \cref{eq:Dsg-Dsg} implies
		\begin{equation}\label{eq:g}
		\cos(\alpha\pi/2)	|z|_{H^{\alpha/2}(0,T)}^2
		=
		\sum_{k=M+1}^{\infty}
		\frac{2k+1 -\alpha}{T^{1-\alpha}}\cdot
		\frac{\dual{y^{(k)},\mu^{k-\alpha,k}}_{(0,T)}}{ T^k\Gamma(k+1-\alpha)}
		\cdot
		\frac{\dual{y^{(k)},\mu^{k,k-\alpha}}_{(0,T)}}{T^k\Gamma(k+1)}.
		\end{equation}
		From the estimate \cref{eq:ydk-w}, we obtain
		\begin{equation}\label{eq:est-Hs-1}
		\begin{split}
		\frac{	\snm{\dual{y^{(k)},\mu^{k-\alpha,k}}_{(0,T)}}}{T^k \Gamma(k+1-\alpha)}
		\leqslant {}&
		C_{\alpha} \snm{y_0}\lambda^{\beta}
		\frac{	\Gamma(k-\beta\alpha)}{ T^k\Gamma(k+1-\alpha)}
		\nm{\mu^{k-\alpha,\beta\alpha}}_{L^1(0,T)}\\
		={}&	C_{\alpha}\snm{y_0}\lambda^{\beta}T^{1+(\beta-1)\alpha}
		\frac{\Gamma(k-\beta\alpha)}{\Gamma(k+2+(\beta-1)\alpha)},
		\end{split}
		\end{equation}
		where $-1\leqslant \beta\leqslant 1$. 
		Besides, based on \cref{eq:est-dk-y-homo,eq:int-Bab}, a 
		similar manipulation as that of \cref{eq:ydk-w} indicates
		\begin{equation}\label{eq:est-Hs-2}
		\begin{split}
		\frac{\snm{	\dual{y^{(k)},\mu^{k,k-\alpha}}_{(0,T)}}}{T^k\Gamma(k+1)}
		\leqslant {}&
		C_{\alpha} \snm{y_0}\lambda^{\gamma}
		\frac{\Gamma(k-\gamma\alpha)}{T^k\Gamma(k+1)}
		\nm{\mu^{k,(\gamma-1)\alpha}}_{L^1(0,T)}\\
		={}&C_{\alpha}\snm{y_0}\lambda^{\gamma}T^{1+(\gamma-1)\alpha}
		\frac{\Gamma(k-\gamma\alpha)}{\Gamma(k+2+(\gamma-1)\alpha)},
		\end{split}
		\end{equation}
		where $-1\leqslant \gamma\leqslant 1$.
		
		Consequently, plugging \cref{eq:est-Hs-1,eq:est-Hs-2} into \cref{eq:g} 
		and applying Stirling's formula \cref{eq:str-pos} yield
		\[
		\begin{split}
		|z|_{H^{\alpha/2}(0,T)}^2
		\leqslant {}&C_{\alpha,T} 
		\snm{y_0}^2
		\lambda^{\beta+\gamma}
		\sum_{k=M+1}^{\infty}
		(	2k+1-\alpha)
		\frac{\Gamma(k-\beta\alpha)}
		{\Gamma(k+2+(\beta-1)\alpha)}
		\cdot
		\frac{\Gamma(k-\gamma\alpha)}
		{\Gamma(k+2+(\gamma-1)\alpha)}\\
		\leqslant {}&C_{\alpha,T} 
		\snm{y_0}^2
		\lambda^{\beta+\gamma}
		\sum_{k=M+1}^{\infty}
		k^{2\alpha-3-2\alpha(\beta+\gamma)}.
		\end{split}
		\]
		Take $\theta = (\beta+\gamma)/2$ and assume $1+2\alpha\theta>\alpha$, 
		then using the proof of \cref{eq:est-L2}  implies the desired estimate \cref{eq:Phi-est-Hs}. 
	\end{proof}
One can observe that the key to get \cref{eq:Phi-est-Hs}  is to establish \cref{eq:est-Hs-1,eq:est-Hs-2}, which are easy to obtain for the singular function $y(t) = t^\gamma$. Hence, according to the proofs of \cref{lem:Phi-L2-w,lem:Phi-est-Hs}, it is not hard to conclude 
the following estimate.
	\begin{lem}\label{lem:Phi-est-tr}
		If $y(t) =t^\gamma$ with $\gamma>(\alpha-1)/2$, then 
		\begin{align*}
			\nm{(I-\Phi_M^{-\alpha,0})y}_{L^{2}_{\mu^{-\alpha,0}}(0,T)} 
			+M^{-\alpha}		\snm{(I-\Phi_M^{-\alpha,0})y}_{H^{\alpha/2}(0,T)} 
			\leqslant {}&C_{\alpha,\gamma,T}
			M^{-1-2\gamma}.
		\end{align*}
	\end{lem}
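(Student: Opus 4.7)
The plan is to re-run the arguments of \cref{lem:Phi-L2-w,lem:Phi-est-Hs} with the explicit monomial $y(t)=t^\gamma$ in place of the Mittag--Leffler function. The payoff is that $y^{(k)}(t)=\frac{\Gamma(\gamma+1)}{\Gamma(\gamma-k+1)}\,t^{\gamma-k}$ is a pure power, so the integrals $\dual{y^{(k)},\mu^{k-\alpha,k}}_{(0,T)}$ and $\dual{y^{(k)},\mu^{k,k-\alpha}}_{(0,T)}$ that drive those two proofs evaluate in closed form through the Beta identity \cref{eq:int-Bab}; their asymptotics are then read off from Stirling's formula \cref{eq:str-pos} together with the reflection identity $\Gamma(\gamma-k+1)^{-1}=(-1)^{k+1}\pi^{-1}\sin(\pi\gamma)\,\Gamma(k-\gamma)$, which converts the otherwise awkward factor $\Gamma(\gamma-k+1)$ for large $k$ into an object amenable to Stirling.

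For the $L^2_{\mu^{-\alpha,0}}$-part I would first check that the $k$-fold integration by parts producing \cref{eq:yk} is still legitimate for $y=t^\gamma$: the generic boundary factor behaves like $t^{\gamma+1+j-\alpha}$ at $t=0$ and like $(T-t)^{k-\alpha-j}$ at $t=T$ for $0\le j\le k-i-1$, so the hypothesis $\gamma>(\alpha-1)/2>\alpha-1$ together with $\alpha<1$ kills every boundary contribution. Plugging in the closed-form Beta integral and applying Stirling then yields $\snm{\dual{y,S_k^{-\alpha,0}}_{\mu^{-\alpha,0}}}\le C_{\alpha,\gamma,T}\,k^{-2-2\gamma}$, and summing the tail $\sum_{k>M}(2k+1-\alpha)\snm{\dual{y,S_k^{-\alpha,0}}_{\mu^{-\alpha,0}}}^2$ exactly as in \cref{eq:est-L2} delivers the bound $C_{\alpha,\gamma,T}M^{-1-2\gamma}$. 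For the $H^{\alpha/2}$-seminorm I would mirror the proof of \cref{lem:Phi-est-Hs}: set $z=(I-\Phi_M^{-\alpha,0})y$, expand it simultaneously in $\{S_k^{-\alpha,0}\}$ and $\{S_k^{0,-\alpha}\}$, use $z^{(k)}=y^{(k)}$ for $k\ge M+1$ to reduce both families of Jacobi coefficients to $\dual{y^{(k)},\mu^{k-\alpha,k}}_{(0,T)}$ and $\dual{y^{(k)},\mu^{k,k-\alpha}}_{(0,T)}$, and establish the monomial analogs of \cref{eq:est-Hs-1,eq:est-Hs-2}. Substituting these into \cref{eq:g} reduces matters to the tail bound $\sum_{k>M}k^{2\alpha-3-4\gamma}\lesssim M^{2\alpha-2-4\gamma}$, giving $\snm{z}_{H^{\alpha/2}(0,T)}\le C_{\alpha,\gamma,T}\,M^{\alpha-1-2\gamma}$ and thus the second half of the estimate after multiplication by $M^{-\alpha}$.

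The most delicate point, and where I expect to spend the most care, is the boundary-term analysis for small, possibly negative $\gamma$: since $y=t^\gamma$ is genuinely singular at $t=0$ when $\gamma<0$, the products $y^{(i)}(t)\cdot\frac{\dd^{k-i-1}}{\dd t^{k-i-1}}\mu^{k-\alpha,k}(t)$ must be examined as $t\to 0^+$, and one also has to check that the constant $C_{\alpha,\gamma,T}$, which acquires a $\sin(\pi\gamma)$ factor from the reflection identity and Beta-function factors from \cref{eq:int-Bab}, stays finite across the whole allowed range $\gamma>(\alpha-1)/2$ (with the integer cases $\gamma\in\mathbb N$ handled separately, since then $y^{(k)}\equiv 0$ for $k>\gamma$ and the bound is trivial). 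Once that is in place, the rest is a routine specialization of computations already carried out in \cref{lem:Phi-L2-w,lem:Phi-est-Hs}.
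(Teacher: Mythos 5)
Your proposal is correct and is exactly the route the paper intends: the paper gives no written proof of this lemma, merely remarking that the monomial analogues of \cref{eq:est-Hs-1,eq:est-Hs-2} are easy to obtain for $y(t)=t^\gamma$ and that the arguments of \cref{lem:Phi-L2-w,lem:Phi-est-Hs} then carry over, and you have supplied precisely those details — the closed-form Beta integrals, the reflection-plus-Stirling asymptotics giving $k^{-2-2\gamma}$, the vanishing of the integration-by-parts boundary terms, and the tail sums whose convergence thresholds ($\gamma>-1/2$ for the $L^2_{\mu^{-\alpha,0}}$ bound and $\gamma>(\alpha-1)/2$ for the $H^{\alpha/2}$ bound) reproduce the stated hypothesis. (One harmless slip: at $t=0$ the boundary product behaves like $t^{\gamma+1+j}$ rather than $t^{\gamma+1+j-\alpha}$, but your more pessimistic exponent is still positive under the stated assumption, so the conclusion is unaffected.)
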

	\begin{rem}
		We mention that  the optimal rate $1+2\gamma$ under $L^2$-norm has 
		already been proved in \cite[Theorem 5]{gui_h_1986_1} for $y(t) = t^\gamma$.
	\end{rem}
	For the particular case that $y_0=0$ and $g=1$, we can also establish optimal  bounds of projection error for the solution to the auxiliary problem \cref{eq:ode-y-f-0}. In fact, by the inequality \cref{eq:dk-y}, we are able to show that  \cref{eq:est-Hs-1,eq:est-Hs-2} also hold in this case.
	Since the proof techniques are almost the same as that of \cref{lem:Phi-L2-w,lem:Phi-est-Hs}, we omit the details here 
	and only list the main results as follows.
	\begin{lem}\label{lem:L2-y-A-1}
		Let  $y(t)$ be given by \cref{eq:y-y0-0-Besov} for $\lambda>0$, then
		\begin{equation}
		\label{eq:L2-y-A-1}
		\nm{\big(I-\Phi_M^{-\alpha,0}\big)y}_{L^2_{\mu^{-\alpha,0}}(0,T)}
		\leqslant {}C_{\alpha,T} \lambda^{\theta-1}M^{-1-2\theta\alpha},
		\end{equation}
		where $-1\leqslant \theta\leqslant 1$ and $1+2\alpha\theta>0$. Moreover,  
		\begin{equation}
		\label{eq:Hs-y-A-1}
		\snm{(I-\Phi_{M}^{-\alpha,0})y}_{H^{\alpha/2}(0,T)}
		\leqslant {}C_{\alpha,T} \lambda^{\theta-1}M^{\alpha-1-2\alpha\theta},
		\end{equation}
		where $-1\leqslant \theta\leqslant 1$ and $1+2\alpha\theta>\alpha$.
	\end{lem}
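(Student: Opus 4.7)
The plan is to mimic the proofs of \cref{lem:Phi-L2-w,lem:Phi-est-Hs} verbatim, with the single observation that the new derivative bound \cref{eq:dk-y} has exactly the same structure as \cref{eq:est-dk-y-homo}, except with the prefactor $\snm{y_0}\lambda^{\theta}$ replaced by $\lambda^{\theta-1}$. Since the entire chain of estimates in \cref{lem:Phi-L2-w,lem:Phi-est-Hs} depends on the homogeneous solution only through this derivative bound, the same chain runs through in the inhomogeneous case and yields the claimed rates.

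In more detail, I first expand $y = t^\alpha E_{\alpha,\alpha+1}(-\lambda t^\alpha)$ in the shifted Jacobi basis, $y=\sum_{k\geqslant 0} y_k S_k^{-\alpha,0}$, with coefficients given via the Rodrigues formula \cref{eq:St}. Integration by parts $k$ times as in the derivation of \cref{eq:yk} produces boundary terms $\zeta_i(0)$ and $\zeta_i(T)$, which I need to check vanish. Using $y^{(i)}(t)=t^{\alpha-i}E_{\alpha,\alpha+1-i}(-\lambda t^\alpha)$ and the fact that each $\zeta_i$ is a sum of terms of the form $y^{(i)}(t)\,\mu^{k-\alpha-j,i+1+j}(t)$ with $0\leqslant j\leqslant k-i-1$, one has near $t=0$ a factor $t^{\alpha+1+j}$ and near $t=T$ a factor $(T-t)^{k-\alpha-j}$ with $k-\alpha-j\geqslant i+1-\alpha>0$; both vanish, so only the interior term $(-1)^k\dual{y^{(k)},\mu^{k-\alpha,k}}_{(0,T)}$ survives, as in \cref{eq:yk}.

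Next, I apply \cref{eq:dk-y} in place of \cref{eq:est-dk-y-homo} and use the beta-integral identity \cref{eq:int-Bab} to produce the analogue of \cref{eq:ydk-w}, namely
\[
\frac{\snm{\dual{y^{(k)},\mu^{k-\alpha,k}}_{(0,T)}}}{T^k\,\Gamma(k+1-\alpha)}
\leqslant C_{\alpha,T}\,\lambda^{\theta-1}
\frac{\Gamma(k-\theta\alpha)}{\Gamma(k+2+(\theta-1)\alpha)}.
\]
Stirling \cref{eq:str-pos} then gives $\snm{y_k}\xi_k^{-\alpha,0}\lesssim \lambda^{\theta-1}k^{-2-2\theta\alpha}$, the exact analogue of \cref{eq:yk-est}. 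Summing the tail $k>M$ as in \cref{eq:est-L2} delivers the weighted $L^2$ bound \cref{eq:L2-y-A-1}. For the $H^{\alpha/2}$ estimate I follow \cref{lem:Phi-est-Hs} line by line: expand the residual in both the left- and right-weighted Jacobi bases, apply the fractional derivative formula \cref{eq:DsS}, invoke \cref{lem:coer} to rewrite $\snm{z}_{H^{\alpha/2}(0,T)}^2$ as the double sum in \cref{eq:g}, and bound each factor by the two-parameter estimates \cref{eq:est-Hs-1,eq:est-Hs-2} (which, as noted in the excerpt, remain valid with the same proof once \cref{eq:dk-y} replaces \cref{eq:est-dk-y-homo}). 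Setting $\theta=(\beta+\gamma)/2$ and summing over $k>M$ exactly as before yields \cref{eq:Hs-y-A-1}.

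The only genuinely new bookkeeping is the boundary-term check described above and the verification that the $\lambda^{\theta-1}$ dependence propagates cleanly through both the left and right expansions (one gets $\lambda^{\beta-1}\cdot\lambda^{\gamma-1}$ and then collects one extra $\lambda$ via the symmetric rewriting, producing the single $\lambda^{2\theta-2}$ inside the square of \cref{eq:Hs-y-A-1}). I expect no substantive obstacle beyond this, because every algebraic step — Rodrigues, integration by parts, the beta-integral, Stirling, and the geometric tail sum — is insensitive to whether the derivative bound comes from the homogeneous or inhomogeneous solution; it only uses that $\snm{y^{(k)}(t)}$ decays like $\Gamma(k-\theta\alpha)\,t^{\theta\alpha-k}$ with the prescribed $\lambda$-dependence.
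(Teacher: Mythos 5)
Your proposal is correct and is exactly the argument the paper intends: the paper itself omits the proof of \cref{lem:L2-y-A-1}, stating only that \cref{eq:dk-y} makes \cref{eq:est-Hs-1,eq:est-Hs-2} valid in this case and that the techniques are otherwise identical to those of \cref{lem:Phi-L2-w,lem:Phi-est-Hs}. Your explicit verification of the vanishing boundary terms and of the propagation of the $\lambda^{\theta-1}$ factor supplies precisely the details the paper leaves out, with no gaps.
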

	
	Below, we present a lemma that connects these projection error bounds above with our desired estimates. Recall that the space $\mathcal X$ 
	is given in \eqref{X-def} and that $\mathcal X^*$ denotes its dual space.
	\begin{lem}\label{lem:u-U}
		If $f+\D_{0+}^\alpha u_0\in\mathcal X^*$, then 
		\begin{align}
			\nm{u-U}_{L^2(0,T;\dot H^1(\Omega))}\lesssim{}&
			\nm{(I-R_h)u}_{\mathcal X} +
			\nm{(I- \Phi_{M}^{-\alpha,0})u}_{L^2(0,T;\dot H^1(\Omega))},
			\label{eq:err-L2H1}\\
			\nm{u-U}_{H^{\alpha/2}(0,T;L^2(\Omega))}\lesssim{}&
			\nm{(I-R_h)u}_{H^{\alpha/2}(0,T;L^2(\Omega))} +
			\nm{(I- \Phi_{M}^{-\alpha,0})u}_{\mathcal X}.
			\label{eq:err-HsL2}
		\end{align}
	\end{lem}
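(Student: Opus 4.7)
My plan is to run a Céa-type argument based on the bilinear form
\[
a(v,w) := \dual{\D_{0+}^\alpha v, w}_{H^{\alpha/2}(0,T;L^2(\Omega))} + \dual{\nabla v, \nabla w}_{\Omega\times(0,T)},
\]
which by \cref{lem:coer} is continuous and coercive on $\mathcal X$, together with the Galerkin orthogonality $a(u-U,V)=0$ for every $V\in P_M(0,T)\otimes X_h$ obtained by subtracting \eqref{eq:U} from \eqref{eq:weak_form}. I introduce the intermediate projection $\Pi u := R_h\Phi_M^{-\alpha,0}u = \Phi_M^{-\alpha,0}R_h u$, which lives in the discrete space since $R_h$ and $\Phi_M^{-\alpha,0}$ commute (they act on different variables). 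I then split $u-U=(u-\Pi u)+e$ with $e:=\Pi u-U\in P_M(0,T)\otimes X_h$, and control $\nm{u-\Pi u}_Y$ for $Y\in\{L^2(0,T;\dot H^1(\Omega)),\,H^{\alpha/2}(0,T;L^2(\Omega))\}$ by the triangle inequality on the decomposition $u-\Pi u=(I-R_h)u+R_h(I-\Phi_M^{-\alpha,0})u$ together with the $\dot H^1$- and $L^2$-stability of $R_h$.

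For the consistency piece $e$, coercivity and Galerkin orthogonality give $\nm{e}_{\mathcal X}^2\lesssim a(e,e)=a(\Pi u-u,e)$. Writing $\Pi u-u=-(I-R_h)u-R_h(I-\Phi_M^{-\alpha,0})u$, three of the four resulting contributions are routine. The spatial part of $a((I-R_h)u,e)$ vanishes by the Ritz identity, since $e(t,\cdot)\in X_h$ pointwise in $t$; its temporal part is bounded by $C\nm{(I-R_h)u}_{H^{\alpha/2}(0,T;L^2(\Omega))}\nm{e}_{\mathcal X}$ via \cref{lem:coer}. For the contribution of $R_h(I-\Phi_M^{-\alpha,0})u$, the Ritz identity together with the commutativity $\nabla\Phi_M^{-\alpha,0}=\Phi_M^{-\alpha,0}\nabla$ collapses the spatial part to $-\dual{\nabla(I-\Phi_M^{-\alpha,0})u,\nabla e}$, which contributes exactly the desired factor $\nm{(I-\Phi_M^{-\alpha,0})u}_{L^2(0,T;\dot H^1(\Omega))}\nm{e}_{L^2(0,T;\dot H^1(\Omega))}$.

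The main obstacle, and the crux of the sharper estimate \eqref{eq:err-L2H1}, is the remaining temporal cross-term $\dual{\D_{0+}^\alpha R_h(I-\Phi_M^{-\alpha,0})u,e}_{H^{\alpha/2}(0,T;L^2(\Omega))}$: a naive application of \cref{lem:coer} bounds it by $\nm{(I-\Phi_M^{-\alpha,0})u}_{H^{\alpha/2}(0,T;L^2(\Omega))}$, i.e.\ the full $\mathcal X$-norm of the $\Phi$-error, which is too weak for \eqref{eq:err-L2H1} (although acceptable for \eqref{eq:err-HsL2}). To eliminate this term I plan to exploit the Jacobi-polynomial structure already used in the proofs of \cref{lem:Phi-L2-w,lem:Phi-est-Hs}: expand $(I-\Phi_M^{-\alpha,0})u=\sum_{k\geq M+1}y_k(x)S_k^{-\alpha,0}(t)$ and $e=\sum_{j\leq M}e_j(x)S_j^{0,-\alpha}(t)$, apply $\D_{0+}^{\alpha/2}S_k^{-\alpha,0}=\frac{k!}{\Gamma(k+1-\alpha/2)}t^{-\alpha/2}S_k^{-\alpha/2}$ and $\D_{T-}^{\alpha/2}S_j^{0,-\alpha}=\frac{j!}{\Gamma(j+1-\alpha/2)}(T-t)^{-\alpha/2}S_j^{-\alpha/2}$, invoke the duality $\dual{\D_{0+}^\alpha v,w}_{H^{\alpha/2}}=\dual{\D_{0+}^{\alpha/2}v,\D_{T-}^{\alpha/2}w}$ from \cref{lem:coer}, and use the orthogonality of $\{S_k^{-\alpha/2}\}$ with respect to the weight $\mu^{-\alpha/2,-\alpha/2}$; the cross-term then reduces to a double sum proportional to $\delta_{kj}$ over the disjoint index sets $k\geq M+1$ and $j\leq M$, hence vanishes identically. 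With this term killed, Young's inequality yields $\nm{e}_{\mathcal X}\lesssim\nm{(I-R_h)u}_{H^{\alpha/2}(0,T;L^2(\Omega))}+\nm{(I-\Phi_M^{-\alpha,0})u}_{L^2(0,T;\dot H^1(\Omega))}$, and combining with the projection estimate via the triangle inequality on $u-U=(u-\Pi u)+e$ delivers both \eqref{eq:err-L2H1} and \eqref{eq:err-HsL2}.
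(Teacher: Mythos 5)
Your proposal is correct and follows essentially the same route as the paper: the intermediate projection $W=\Phi_M^{-\alpha,0}R_hu$, Galerkin orthogonality plus coercivity from \cref{lem:coer}, the Ritz identity to remove the spatial cross-term, and the triangle inequality with projection stability at the end. The Jacobi-expansion argument you devise to annihilate the temporal cross-term is precisely the content of the paper's \cref{lem:Da} (proved in the appendix via \cref{eq:DsS} and the orthogonality of $\{S_k^{-\alpha/2}\}$), which the paper simply invokes at that point.
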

	\begin{proof}
		By \cref{eq:weak_form}, for any $V\in P_M(0,T)\otimes X_h$ we have
		\[
		\dual{\D_{0+}^\alpha u,V}_{H^{\alpha/2}(0,T;L^2(\Omega))} +
		\dual{\nabla u,\nabla V}_{\Omega\times(0,T)} =
		\dual{f+\D_{0+}^\alpha u_0,V}_{\mathcal X},
		\]
		which, together with \cref{eq:U}, gives the error equation
		\begin{equation}\label{eq:err}
		\dual{\D_{0+}^\alpha(u-U), V}_{H^{\alpha/2}(0,T;L^2(\Omega))} +
		\dual{\nabla(u-U),\nabla V}_{\Omega\times(0,T)}=0
		\quad\forall\, V\in P_M(0,T)\otimes X_h.
		\end{equation}
		Hence it follows that
		\[
		\begin{split}
		{}&\dual{\D_{0+}^\alpha (U-W),V}_{H^{\alpha/2}(0,T;L^2(\Omega))} +
		\dual{\nabla (U-W),\nabla V}_{\Omega\times(0,T)}  \\
		= {}&
		\dual{\D_{0+}^\alpha (u-W),V}_{H^{\alpha/2}(0,T;L^2(\Omega))} +
		\dual{\nabla (u-W),\nabla V}_{\Omega\times(0,T)} ,
		\end{split}
		\]
		where $W = \Phi_{M}^{-\alpha,0}R_hu$. Applying \cref{lem:Da} and the definition of the Ritz projection $R_h$
		yields 
		\[
		\begin{split}
		{}&\dual{\D_{0+}^\alpha (U-W),V}_{H^{\alpha/2}(0,T;L^2(\Omega))} +
		\dual{\nabla (U-W),\nabla V}_{\Omega\times(0,T)}  \\
		= {}&
		\dual{\D_{0+}^\alpha (u-R_hu),V}_{H^{\alpha/2}(0,T;L^2(\Omega))} +
		\dual{\nabla (I- \Phi_{M}^{-\alpha,0})u,\nabla V}_{\Omega\times(0,T)},
		\end{split}
		\]
		and taking $V = U-W$ implies that
		\[
		\nm{U-W}_{\mathcal X}\lesssim
		\nm{(I-R_h)u}_{H^{\alpha/2}(0,T;L^2(\Omega))} +
		\nm{(I- \Phi_{M}^{-\alpha,0})u}_{L^2(0,T;\dot H^1(\Omega))}.
		\]
		Now using  the triangle inequality and the stability result \cref{eq:stab-Phi-ma-0} gives
		\[
		\begin{split}
		{}&\nm{u-U}_{H^{\alpha/2}(0,T;L^2(\Omega))}
		\lesssim	\nm{u-W}_{H^{\alpha/2}(0,T;L^2(\Omega))}+\nm{U-W}_{H^{\alpha/2}(0,T;L^2(\Omega))}\\
		\lesssim{}&
		\nm{(I-  \Phi_{M}^{-\alpha,0})u}_{H^{\alpha/2}(0,T;L^2(\Omega))}+
		\nm{ \Phi_{M}^{-\alpha,0}(I-R_h)u}_{H^{\alpha/2}(0,T;L^2(\Omega))}+
		\nm{U-W}_{\mathcal X}\\
		\lesssim{}&\nm{(I-R_h)u}_{H^{\alpha/2}(0,T;L^2(\Omega))} +
		\nm{(I- \Phi_{M}^{-\alpha,0})u}_{\mathcal X}.
		\end{split}
		\]
		This establishes \cref{eq:err-HsL2}. As \cref{eq:err-L2H1} can be 
		proved similarly, we conclude the poof of this lemma. 
	\end{proof} 
	
	\subsection{Proofs of \cref{thm:est-u}--\ref{thm:est-f}}
	\medskip\noindent{\bf  Proof of \cref{thm:est-u}.} 
	Since $u(x,t) = y(t)\phi(x)$, where $\phi\in \dot H^2(\Omega)$ and $y(t) = t^\beta$ with $\beta>(\alpha-1)/2$, it is evident that
	\[
	\begin{split}
	\nm{(I-R_h)u}_{L^{2}(0,T;\dot H^1(\Omega))} 
	\lesssim {}&\nm{(I-R_h)\phi}_{\dot H^1(\Omega)} \lesssim 
	h \nm{\phi}_{\dot H^{2}(\Omega)},\\
	\nm{(I-R_h)u}_{H^{\alpha/2}(0,T;L^2(\Omega))} 
	\lesssim {}&\nm{(I-R_h)\phi}_{L^2(\Omega)} \lesssim 
	h^2 \nm{\phi}_{\dot H^{2}(\Omega)}.
	\end{split}
	\]
	In addition, applying \cref{lem:Phi-est-tr} gives 
	\[
	\begin{split}
	\nm{(I-\Phi_{M}^{-\alpha,0})u}_{L^{2}(0,T;\dot H^1(\Omega))} 
	\lesssim {}&\nm{\phi}_{\dot H^{1}(\Omega)}
	\nm{(I-\Phi_{M}^{-\alpha,0})y}_{L^{2}(0,T)} \lesssim 
	M^{-1-2\beta}\nm{\phi}_{\dot H^{1}(\Omega)},\\
	\nm{(I-\Phi_{M}^{-\alpha,0})u}_{H^{\alpha/2}(0,T;L^2(\Omega))} 
	\lesssim {}&\nm{\phi}_{L^{2}(\Omega)}
	\nm{(I-\Phi_{M}^{-\alpha,0})y}_{H^{\alpha/2}(0,T)} \lesssim 
	M^{\alpha-1-2\beta}\nm{\phi}_{L^{2}(\Omega)}.
	\end{split}
	\]
	Combining the above four estimates with \cref{lem:u-U} yields that
	\[
	\begin{split}
	\nm{u-U}_{L^{2}(0,T;\dot H^1(\Omega))} 
	\lesssim {}&\left(h+M^{-1-2\beta}\right) \nm{\phi}_{\dot H^{2}(\Omega)},\\
	\nm{u-U}_{H^{\alpha/2}(0,T;L^2(\Omega))} 
	\lesssim {}&\left(h^2+M^{\alpha-1-2\beta}\right) 
	\nm{\phi}_{\dot H^{2}(\Omega)},
	\end{split}
	\]
	which show \cref{eq:est-u-L2H1,eq:est-u-HsL2} and then finish the proof of \cref{thm:est-u}.
	\hfill\ensuremath{\blacksquare}
	\vskip 0.2cm
	Since the proof of \cref{thm:est-f} is parallel to that 
	of \cref{thm:est-u0}, we only consider the latter.
	
	\medskip\noindent{\bf  Proof of \cref{thm:est-u0}.} 
	According to \cref{thm:regu-u0}, we have
	\[
	\begin{split}
	\nm{(I-R_h)u}_{L^{2}(0,T;\dot H^1(\Omega))} 
	\lesssim {}&
	\left\{
	\begin{aligned}
	&\frac{1}{\sqrt{\epsilon}}h^{\min\{1,\gamma+1\}-\epsilon}
	\nm{u_0}_{\dot H^{\gamma}(\Omega)},&&\alpha=1/2,\\
	&h^{\min\{1,\gamma+1,1/\alpha+\gamma-1\}}
	\nm{u_0}_{\dot H^{\gamma}(\Omega)},&&\alpha\neq1/2,
	\end{aligned}
	\right.\\
	\nm{(I-R_h)u}_{H^{\alpha/2}(0,T;L^2(\Omega))} 
	\lesssim {}&
	h^{\min\{2,\gamma+1,1/\alpha+\gamma-1\}}
	\nm{u_0}_{\dot H^{\gamma}(\Omega)}.
	\end{split}
	\]
	For $\alpha=1/2$, we choose $\epsilon = 1/(2+\snm{\ln h})$ to get
	\[
	\nm{(I-R_h)u}_{L^{2}(0,T;\dot H^1(\Omega))} 
	\lesssim {}
	\sqrt{\snm{\ln h}}\,h^{\min\{1,\gamma+1\}}
	\nm{u_0}_{\dot H^{\gamma}(\Omega)}.
	\]
	If $\max\{-1,1-\frac1\alpha\}< \gamma\leqslant 3$, then invoking \cref{lem:decomp-u,lem:Phi-L2-w,rem:L2} gives the estimate
	\[
	\nm{(I-\Phi_{M}^{-\alpha,0})u}_{L^{2}(0,T;\dot H^1(\Omega))} 
	\lesssim {}
	M^{-1-\alpha(\gamma-1)}\nm{u_0}_{\dot H^{\gamma}(\Omega)}.
	\]
	And if $\max\{-1,1-\frac1\alpha\}<  \gamma\leqslant 2$, then by \cref{lem:decomp-u,rem:L2,eq:L2-y-A-1} we conclude that
	\[
	\nm{(I-\Phi_{M}^{-\alpha,0})u}_{\mathcal X} 
	\lesssim M^{-1-\alpha(\gamma-1)}
	\nm{u_0}_{\dot H^{\gamma}(\Omega)}.
	\]
	Consequently, combining the above estimates with \cref{lem:u-U} 
	proves \cref{eq:est-u0-HsL2,eq:est-u0-L2H1} and thus
	completes the proof of \cref{thm:est-u0}.
	\hfill\ensuremath{\blacksquare}
	\section{Numerical Tests}
	\label{sec:numer}
	This section presents several numerical examples to validate our theoretical predictions. For simplicity, we take $ T = 1 $, $ \Omega = (0,1) $, and set
	\begin{align*}
		\mathcal E_1 & := \nm{\widehat u - U}_{L^2(0,T;\dot H^1(\Omega))},\\
		\mathcal E_2 & := \nm{\widehat u - U}_{H^{\alpha/2}(0,T;L^2(\Omega))},
	\end{align*}
	where $ \widehat u $ is the reference solution in the case of $M = 150,\,h = 2^{-10}$. 
	
	As the spatial discretization utilizes the standard conforming finite element method, which has been investigated and confirmed in \cite{li_luo_xie_analysis_2019}, in 
	the following we are mainly interested in the convergence behavior of temporal discretization errors.
	\begin{eg}
		\label{eg:true-u}
		This example is to verify \cref{thm:est-u}
		with an a priorly known solution 
		\[
		u(x,t)  := t^\beta\sin\pi x,\quad (x,t)\in\Omega\times(0,T),
		\]
		where $\beta>(\alpha-1)/2$. Temporal discretization errors are plotted in \cref{fig:temp_test_1}, where the following convergence rates are observed:
		\[
		\mathcal E_1 = O(M^{-1-2\beta}),\quad \mathcal E_2 = O(M^{\alpha-1-2\beta}).
		\]
		These agree well with the theoretical results given by \cref{thm:est-u}.
		\begin{figure}[H]
			\centering
			\includegraphics[width=440pt,height=180pt]{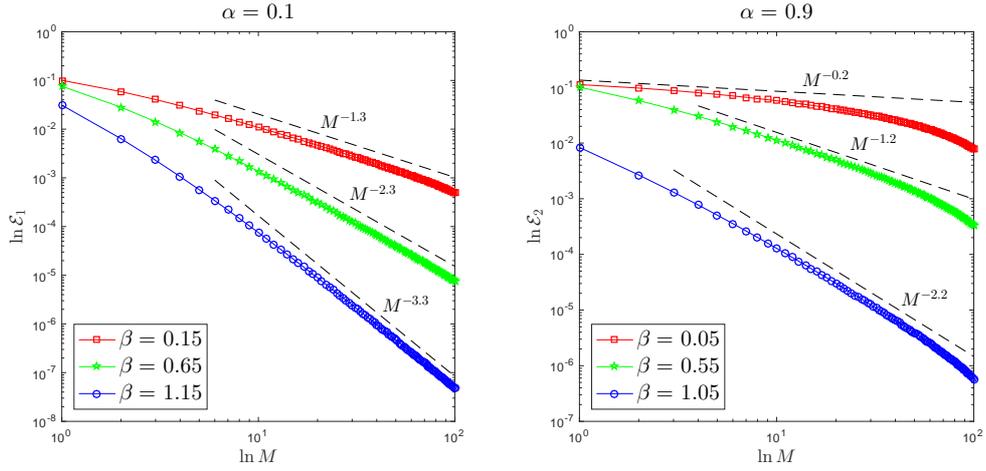}
			\caption{Temporal errors of \cref{eg:true-u} with $h = 2^{-10}$.}
			\label{fig:temp_test_1}
		\end{figure}
	\end{eg}
	\begin{eg}\label{eg:u0}
		To verify \cref{thm:est-u0}, we consider $f= 0$ and
		\[
		u_0(x)  :=\theta x(1-x)^{\gamma-1/2}+(1-\theta)\sin\pi x,\quad x\in\Omega,
		\]
		where $\max\{-1,1-\frac1\alpha\}<\gamma<5/2$ and $\theta=0,1$. For $\theta = 1$, a straightforward calculation yields that $u_0\in \dot H^{\gamma-\epsilon}(\Omega)$ for any $\epsilon>0$; and for $\theta = 0$, we have $u_0 = \sin\pi x\in\dot H^{\beta}(\Omega)$ with any $\beta>0$, since $\sin\pi x$ is an eigenfunction of $-\Delta $ on $\Omega=(0,1)$ with the homogeneous Dirichlet boundary condition. The convergence behaviour is plotted in \cref{fig:temp_test_2}, which implies that
		\[
		\begin{split}
		\mathcal E_1 ={}& 
		\left\{
		\begin{aligned}
		&O(M^{-1-\alpha(\gamma-1)}),&& -1/2<\gamma<3,\,\theta = 1,\\
		&O(M^{-1-2\alpha}),&& \theta = 0,
		\end{aligned}
		\right.\\
		\mathcal E_2 ={}& 
		\left\{
		\begin{aligned}
		&O(M^{-1-\alpha(\gamma-1)}),&& -0.1<\gamma<2,\,\theta = 1,\\
		&O(M^{-1-\alpha}),&& \theta = 0.
		\end{aligned}
		\right.
		\end{split}
		\]
		These coincide with the sharp estimates established in \cref{thm:est-u0}.  
		\begin{figure}[H]
			\centering
			\includegraphics[width=440pt,height=180pt]{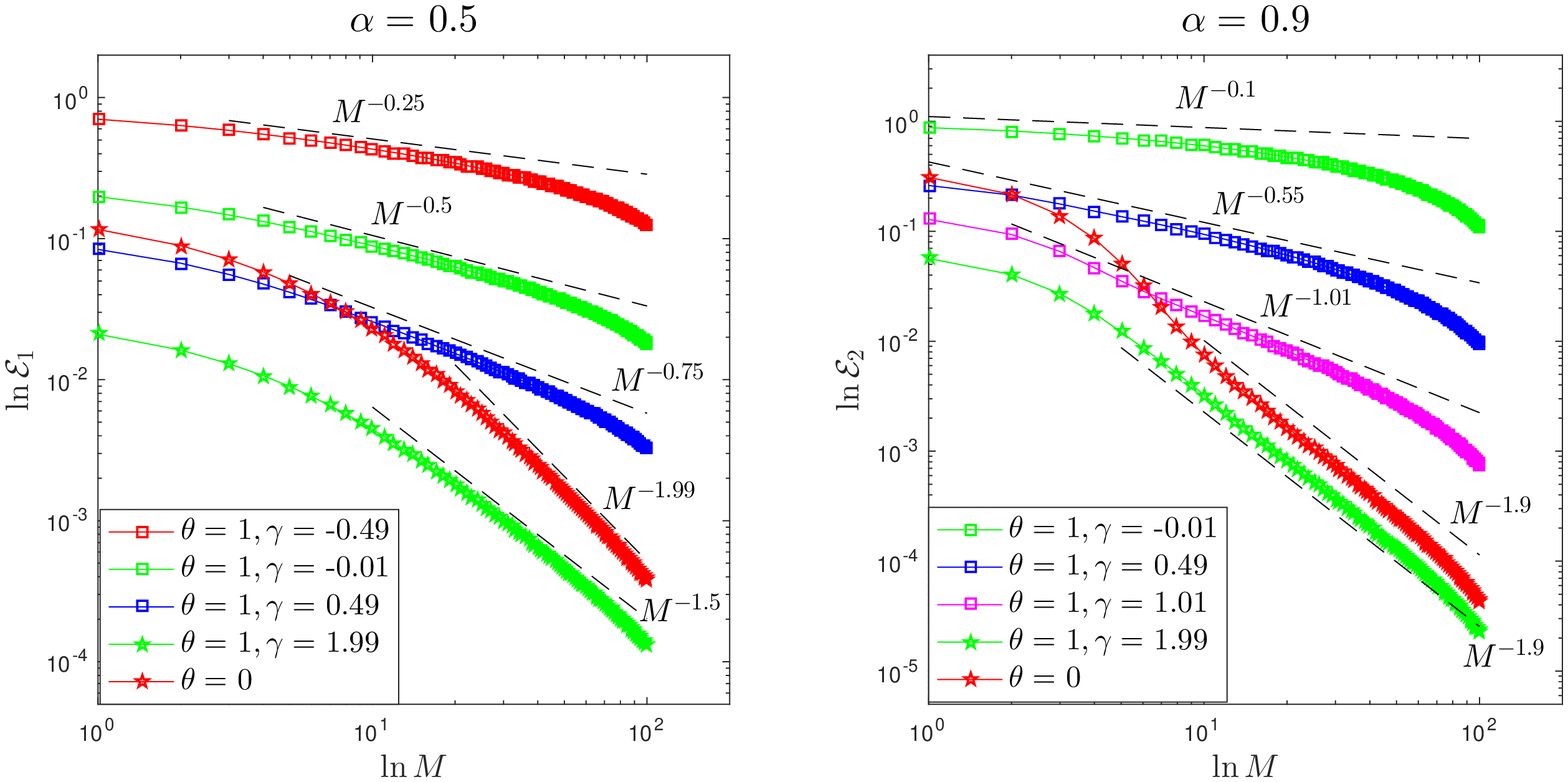}
			\caption{Temporal errors of \cref{eg:u0} with $h = 2^{-10}$.}
			\label{fig:temp_test_2}
		\end{figure}
	\end{eg}
	\begin{eg}	\label{eg:f}
		This example  is to verify
		\cref{thm:est-f} with $u_0 = 0$ and 
		\[
		f(x,t) = v(x) := x^{\gamma-1/2}(1-x),\quad x\in\Omega,
		\]	
		where $-1/2<\gamma<3/2$. It is evident that $v\in \dot H^{\gamma-\epsilon}(\Omega)$ for any $\epsilon>0$. Numerical results are displayed in \cref{fig:temp_test_3}, from which we  conclude that 
		\[
		\begin{split}
		\mathcal E_1 ={}& 
		\left\{
		\begin{aligned}
		&O(M^{-1-\alpha(\gamma+1)}),&& -1/2<\gamma<1,\\
		&O(M^{-1-2\alpha}),&& 1\leqslant \gamma<3/2,
		\end{aligned}
		\right.\\
		\mathcal E_2 ={}& 
		\left\{
		\begin{aligned}
		&O(M^{-1-\alpha(\gamma+1)}),&& -1/2<\gamma<0,\\
		&O(M^{-1-\alpha}),&& 0\leqslant \gamma<3/2.
		\end{aligned}
		\right.
		\end{split}
		\]
		These are conformable to  the sharp error bounds presented in \cref{thm:est-f}.
		\begin{figure}[H]
			\centering
			\includegraphics[width=440pt,height=180pt]{}
			\caption{Temporal errors of \cref{eg:f} with $h = 2^{-10}$.}
			\label{fig:temp_test_3}
		\end{figure}
	\end{eg}
	\section{Conclusion}
	\label{sec:concl}
	This paper has concerned the sharp error estimation of the time-spectral algorithm for time
	fractional diffusion problems of order $\alpha$ ($0 < \alpha < 1$). Based on the new regularity results in the Besov space, optimal convergence rates of the  numerical algorithm have been derived with low regularity data. Particularly, sharp temporal convergence orders $1+2\alpha$ and $1+\alpha$ under $L^2(0,T;\dot H^1(\Omega))$-norm and $H^{\alpha/2}(0,T;L^2(\Omega))$-norm have been shown theoretically and  numerically. 
	
	Beyond the presented results, several problems are deserving further investigation. The first is to establish the error estimate under $L^2(0,T;L^2(\Omega))$-norm. We conjecture that, {\it for both the homogenuous case that $f=0,\,u_0\in\dot H^{2}(\Omega)$ and the nonhomogeneous case that $u_0=0,\,f = v\in L^2(\Omega)$, the theoretical temporal accuracy under $L^2(0,T;L^2(\Omega))$-norm shall be $1+2\alpha$}, which has been verified via our numerical tests (not displayed in the context). The second is to conquer the boundary singularity via some singular basis or log orthogonal function \cite{chen_log_2020} and establish a rigorous error analysis for nonsmooth data. 
	
	\appendix
	
	\section{The Shifted Jacobi Polynomial}
	\label{sec:sJacobi-append}
	Given $a,b>-1$, the family of shifted Jacobi polynomial $\{S_k^{a,b}\}_{k=0}^\infty$ on $(0,T)$ are defined as follows:
	\begin{equation}\label{eq:St}
	\mu^{a,b}(t)S_{k}^{a,b}(t)=\frac{(-1)^{k}}{T^{k} k !}
	\frac{\dd^{k}}{\dd t^{k}}\mu^{k+a,k+b}(t),\quad 0<t<T,
	\end{equation}
	where $\mu^{\nu,\theta}(t)=(T-t)^{\nu}t^{\theta}$ for all $-1<\nu,\theta<\infty$. 
	Note that \cref{eq:St} is also called Rodrigues' formula \cite{shen_spectral_2011}, which implies $\{S_k^{a,b}\}_{k=0}^\infty$ 
	is orthogonal with respect to the weight $\mu^{a,b}$ on $(0,T)$, i.e.,
	\begin{equation}\label{eq:orth-S}
	\dual{S_{k}^{a, b},S_{l}^{a, b}}_{\mu^{a,b}}
	=\xi_k^{a, b} \delta_{kl},
	\end{equation}
	where $\delta_{kl}$ denotes the Kronecker product and 
	\begin{equation}\label{eq:xiab}
	\xi_k^{a, b}:=\frac{T^{a+b+1} \Gamma(k+a+1) \Gamma(k+b+1)}{(2k+a+b+1) k! \Gamma(k+a+b+1)}.
	\end{equation}
	
	As $\{S_k^{a,b}\}_{k=0}^\infty$ forms a complete orthogonal basis of $L^2_{\mu^{a,b}}(0,T)$, any $v\in L^2_{\mu^{a,b}}(0,T)$ admits a unique decomposition
	\[
	v=\sum_{k=0}^{\infty}
	v_kS_k^{a,b} \quad\text{with } v_k = \frac{1}{\xi_k^{a,b}}
	\dual{v,S_{k}^{a, b}}_{\mu^{a,b}},
	\]
	and the $L^2_{\mu^{a,b}}$-orthogonal projection 
	of $v$ onto $P_M(0,T)$ is defined as 
	\[
	\Phi_{M}^{a,b}v:=
	\sum_{k=0}^{M}
	v_kS_k^{a,b}.
	\]
	For ease of notation, we shall set $S_k^a = S_k^{a,a},\,\mu^{a} = \mu^{a,a},\,\Phi_M^a = \Phi_M^{a,a}$, and all the superscripts are omitted 
	when $a = 0$. 

	%
	Thanks to \cite[Lemma 2.4]{chen_generalized_2014}, a standard calculation gives
	\begin{equation}\label{eq:IsS}
	\D_{0+}^{-\theta}
	S_k^{\beta+\theta,0} 
	={} \frac{ k!}{\Gamma(k+1+\theta)}t^{\theta}
	S_k^{\beta,\theta}(t),\quad 
	\D_{T-}^{-\theta}	S_k^{0,\beta+\theta} 
	={} \frac{ k!}{\Gamma(k+1+\theta)}
	(T-t)^{\theta}S_k^{\theta,\beta}(t)
	\end{equation}
	and 
	\begin{equation}\label{eq:DsS}
	\D_{0+}^{\theta}
	S_k^{\beta-\theta,0} 
	={} \frac{ k!}{\Gamma(k+1-\theta)}t^{-\theta}
	S_k^{\beta,-\theta}(t),\quad
	\D_{T-}^{\theta}	S_k^{0,\beta-\theta} 
	={} \frac{ k!}{\Gamma(k+1-\theta)}
	(T-t)^{-\theta}
	S_k^{-\theta,\beta}(t),
	\end{equation}
	where $0<\theta<1$ and $-1<\beta<\infty$.
	\begin{lem}
		\label{lem:Da}
		For any $v\in H^{\alpha/2}(0,T)$, we have 
		\begin{align}
			\label{eq:orth-D0+}
			\dual{\D_{0+}^{\alpha}(I-\Phi_{M}^{-\alpha,0})v,q}_{H^{\alpha/2}(0,T)} = 0
			\quad \forall\,q\in P_M(0,T).
		\end{align}
		Consequently, it holds the stability  
		\begin{align}
			\label{eq:stab-Phi-ma-0}
			\snm{\Phi_{M}^{-\alpha,0}v}_{H^{\alpha/2}(0,T)} \leqslant {}&
			C_{\alpha}\snm{v}_{H^{\alpha/2}(0,T)}.
		\end{align}
	\end{lem}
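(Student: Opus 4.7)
The plan is to establish the Galerkin-type orthogonality \eqref{eq:orth-D0+} first, and then extract the stability \eqref{eq:stab-Phi-ma-0} in a few lines via a coercivity argument. The engine is \cref{lem:coer} with $\gamma=\alpha/2$, which converts the $H^{\alpha/2}$-pairing into an $L^2$-pairing of half-derivatives, combined with the spectral formulas \eqref{eq:DsS} for shifted Jacobi polynomials. The algebraic observation I would exploit is that the residual $(I-\Phi_M^{-\alpha,0})v$ should be expanded in the basis $\{S_k^{-\alpha,0}\}$ used by the projection itself, whereas the test polynomial $q\in P_M(0,T)$ should be expanded in the ``mirror'' basis $\{S_l^{0,-\alpha}\}$ (a legitimate basis of $P_M(0,T)$ since $0<\alpha<1$). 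Then both $\D_{0+}^{\alpha/2}$ and $\D_{T-}^{\alpha/2}$ land in the \emph{same} weighted Jacobi family $\{S_n^{-\alpha/2}\}$ associated with the weight $\mu^{-\alpha/2}$, and \eqref{eq:orth-S} collapses the pairing.

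For \eqref{eq:orth-D0+}, I would first apply the third identity of \cref{lem:coer} to obtain
\[
\langle \D_{0+}^{\alpha}(I-\Phi_M^{-\alpha,0})v,\,q\rangle_{H^{\alpha/2}(0,T)}=\langle \D_{0+}^{\alpha/2}(I-\Phi_M^{-\alpha,0})v,\,\D_{T-}^{\alpha/2}q\rangle_{(0,T)},
\]
then expand $(I-\Phi_M^{-\alpha,0})v=\sum_{k\geqslant M+1}v_k S_k^{-\alpha,0}$ and $q=\sum_{l=0}^{M}q_l S_l^{0,-\alpha}$, and invoke \eqref{eq:DsS} with $\theta=\alpha/2$ and $\beta=-\alpha/2$ to produce
\[
\D_{0+}^{\alpha/2}S_k^{-\alpha,0}=c_k\, t^{-\alpha/2}S_k^{-\alpha/2},\qquad \D_{T-}^{\alpha/2}S_l^{0,-\alpha}=c_l\,(T-t)^{-\alpha/2}S_l^{-\alpha/2},
\]
with $c_n=n!/\Gamma(n+1-\alpha/2)$. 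Multiplying, the weight $t^{-\alpha/2}(T-t)^{-\alpha/2}=\mu^{-\alpha/2}(t)$ appears, and since each summation index satisfies $k\geqslant M+1>M\geqslant l$, the orthogonality relation \eqref{eq:orth-S} for $\{S_n^{-\alpha/2}\}$ against $\mu^{-\alpha/2}$ kills every cross-term, establishing \eqref{eq:orth-D0+}.

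For \eqref{eq:stab-Phi-ma-0}, set $w:=\Phi_M^{-\alpha,0}v\in P_M(0,T)$. The first and third identities of \cref{lem:coer} together give
\[
\cos(\alpha\pi/2)\,\snm{w}_{H^{\alpha/2}(0,T)}^{2}=\langle \D_{0+}^{\alpha/2}w,\D_{T-}^{\alpha/2}w\rangle_{(0,T)}=\langle \D_{0+}^{\alpha}w,w\rangle_{H^{\alpha/2}(0,T)}.
\]
Applying \eqref{eq:orth-D0+} with test polynomial $q=w$ makes the residual contribution $\langle\D_{0+}^{\alpha}(v-w),w\rangle_{H^{\alpha/2}}$ vanish, so the last quantity equals $\langle\D_{0+}^{\alpha}v,w\rangle_{H^{\alpha/2}(0,T)}$, which the final inequality in \cref{lem:coer} bounds by $\cos(\alpha\pi/2)\,\snm{v}_{H^{\alpha/2}}\snm{w}_{H^{\alpha/2}}$. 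Cancelling one factor yields \eqref{eq:stab-Phi-ma-0} with $C_\alpha=1$. The main obstacle, in my view, is recognising the required asymmetric pairing of bases $\{S_k^{-\alpha,0}\}$ and $\{S_l^{0,-\alpha}\}$, which is forced by demanding $\beta-\theta=-\alpha$ with $\theta=\alpha/2$ in \emph{both} instances of \eqref{eq:DsS}; once that matching is spotted, the rest reduces to the standard orthogonality \eqref{eq:orth-S} and the coercivity and Cauchy--Schwarz inequalities of \cref{lem:coer}.
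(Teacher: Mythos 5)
Your proof is correct and follows essentially the same route as the paper: expand the residual in $\{S_k^{-\alpha,0}\}$ and the test polynomial in $\{S_l^{0,-\alpha}\}$, push both into the common family $\{S_n^{-\alpha/2}\}$ via \cref{eq:DsS} with $\theta=\alpha/2$, and let the orthogonality \cref{eq:orth-S} with weight $\mu^{-\alpha/2}$ annihilate the pairing. The only difference is that you spell out the coercivity/Cauchy--Schwarz derivation of \cref{eq:stab-Phi-ma-0}, which the paper dismisses as a direct corollary (the paper also notes, via Grisvard, that $v\in H^{\alpha/2}(0,T)$ implies $v\in L^2_{\mu^{-\alpha,0}}(0,T)$ so the expansion is legitimate — a point worth making explicit).
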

	\begin{proof}
		Given any $v\in H^{\alpha/2}(0,T)$, by \cite[Theorem 1.4.4.3]{grisvard_elliptic_nodate} it is clear that $v\in L_{\mu^{-\alpha,0}}^2(0,T)=\bb_{-\alpha,0}^0(0,T)$. Thus we have the orthogonal decomposition 
		\[
		v=	\sum_{k=0}^{\infty}
		v_kS_k^{-\alpha,0},\quad v_k = \frac{1}{\xi_k^{-\alpha,0}}
		\dual{v,S_k^{-\alpha,0}}_{	\mu^{-\alpha,0}},
		\]
		where $	\xi_k^{-\alpha,0}$ is given by \cref{eq:xiab},
		and the $L^2_{\mu^{-\alpha,0}}$-orthogonal projection of $v$ onto $P_M(0,T)$ is given by
		\[
		\Phi_M^{-\alpha,0}v=
		\sum_{k=0}^{M}
		v_kS_k^{-\alpha,0}.
		\]
		Hence, the projection error reads as  
		\[
		(I-\Phi_M^{-\alpha,0})v=
		\sum_{k=M+1}^{\infty}
		v_kS_k^{-\alpha,0}.
		\]	
		For any $q\in P_M(0,T)$, we rewrite it as an expansion of $\{S_k^{0,-\alpha}\}_{k=0}^M$:
		\[
		q=
		\sum_{k=0}^{M}
		q_kS_k^{0,-\alpha},\quad q_k = \frac{1}{\xi_k^{0,-\alpha}}
		\dual{q,S_k^{0,-\alpha}}_{	\mu^{0,-\alpha}},
		\]
		where $	\xi_k^{0,-\alpha}$ is defined by \cref{eq:xiab}. 
		From \cref{eq:DsS} we get
		\[
		\D_{0+}^{\alpha/2}(I-\Phi_{M}^{-\alpha,0})v
		=
		\sum_{k=M+1}^{\infty}
		\frac{	v_kk!	}{\Gamma(k+1-\alpha/2)}
		t^{-\alpha/2}
		S_k^{-\alpha/2},
		\]
		\[
		\D_{T-}^{\alpha/2}q 
		=
		\sum_{k=0}^{M}
		\frac{	q_kk!	}{\Gamma(k+1-\alpha/2)}
		(T-t)^{-\alpha/2}
		S_k^{-\alpha/2}.
		\]
		Applying \cref{lem:coer} yields the equality
		\[
		\dual{\D_{0+}^{\alpha}(I-\Phi_{M}^{-\alpha,0})v,q}_{H^{\alpha/2}(0,T)} 
		=	
		\dual{\D_{0+}^{\alpha/2}(I-\Phi_{M}^{-\alpha,0})v,\D_{T-}^{\alpha/2}q}_{(0,T)},
		\]
		and it follows from the orthogonality of $\{S^{-\alpha/2}_k\}_{k=0}^\infty$ 
		with respect to the weight $\mu^{-\alpha/2}$ that
		\[
		\dual{\D_{0+}^{\alpha/2}(I-\Phi_{M}^{-\alpha,0})v,\D_{T-}^{\alpha/2}q}_{(0,T)} = 0\quad\forall\,q\in P_M(0,T).
		\]
		This shows \cref{eq:orth-D0+}. As \cref{eq:stab-Phi-ma-0} is a direct corollary of \cref{eq:orth-D0+}, we complete the proof of this lemma.
	\end{proof}
	\begin{rem}
		Define the generalized Jacobi orthogonal projection 
		\[
		\pi_M^{-\alpha}:L^2_{\mu^{-\alpha}}(0,T)
		\longrightarrow \{t^{\alpha}\}\otimes P_M(0,T)
		\]
		by that 
		\[
		\dual{(I-		\pi_M^{-\alpha})v,q}_{\mu^{-\alpha}} = 0
		\quad\forall\,q\in \{t^{\alpha}\}\otimes P_M(0,T)\quad  \forall v\in L^2_{\mu^{-\alpha}}(0,T).
		\]
		Then for any $v\in H^{\alpha/2}(0,T)$, we have an identity similar with \cref{eq:orth-D0+} (see \cite[Remark 4.3]{chen_generalized_2014} or \cite[Eq. 3.26]{shen_efficient_2019}):
		\[
		\dual{\D_{0+}^{\alpha}(I-\pi_{M}^{-\alpha})v,q}_{H^{\alpha/2}(0,T)} = 0
		\quad \forall\,q\in P_M(0,T).
		\]
	\end{rem}
	\bibliographystyle{abbrv}
	
\end{document}